\providecommand*{\dashv}{%
  \mathrel{%
    \mathpalette\@dashv\vdash
  }%
}
\newcommand*{\@dashv}[2]{%
  \reflectbox{$\m@th#1#2$}%
}
\theoremstyle{plain}
\newtheorem{theorem}{Theorem}[section]
\newtheorem{mainthm}[theorem]{Main Theorem}
\newtheorem{corollary}[theorem]{Corollary}
\newtheorem{proposition}[theorem]{Proposition}
\theoremstyle{definition}
\newtheorem{example}[theorem]{Example}
\newtheorem{fact}[theorem]{Fact}
\newtheorem{definition}[theorem]{Definition}
\newtheorem{remark}[theorem]{Remark}
\newtheorem{ques}[theorem]{Question}
\newtheorem{obs}[theorem]{Observation}
\newtheorem{conj}[theorem]{Conjecture}
\newcommand{\lk}{\leq_{\bf K}}
\newcommand{\oop}{\operatorname}
\newcommand{\gs}{\oop{gS}}
\newcommand{\cf}{\oop{cf}}
\newcommand{\ran}{\oop{ran}}
\newcommand{\ls}{\oop{LS}({\bf K})}
\newcommand{\llk}{\oop{L}({\bf K})}
\newcommand{\defeq}{\vcentcolon=}
\def\fork{\mathrel{\raise0.2ex\hbox{\ooalign{\hidewidth$\vert$\hidewidth\cr\raise-0.9ex\hbox{$\smile$}}}}}
\newcommand{\nr}[1]{\lVert #1 \rVert}
\newcommand{\al}{{\aleph_0}}
\newcommand{\mylabel}[2]
    {\protected@write\@auxout{}{\string\newlabel{#1}{{#2}{\thepage}%
      {\@currentlabelname}{\@currentHref}{}}}}}%
\newcommand{\mylabel}[2]
    {\protected@write\@auxout{}{\string\newlabel{#1}{{#2}{\thepage}}}}}
\begin{document}

\pagenumbering{roman}
\setcounter{page}{0}
\newpage
\pagenumbering{arabic}
\setcounter{page}{1}
\parindent=.35in
\begin{center}
        \begin{center}%
         {\Large\bfseries\scshape Axiomatizing AECs and Applications}\\\vspace{1em}{\scshape Samson Leung}\\      
        \end{center}%
\end{center}
{\let\thefootnote\relax\footnote{Date: \today\\
AMS 2020 Subject Classification: Primary 03C48. Secondary: 03C45, 03C55.
Key words and phrases. Shelah's presentation theorem; infinitary logic; categoricity spectrum; axiomatization; Hanf number.
}} 
\begin{abstract}
For any abstract elementary class (AEC) ${\bf K}$ with $\lambda=\ls$, the following holds:
\begin{enumerate}
\item $K$ has an axiomatization in $L_{(2^\lambda)^+,\lambda^+}$, allowing game quantification. If ${\bf K}$ has arbitrarily large models, the $\lambda$-amalgamation property and is categorical both in $\lambda$ and $\lambda^+$, then it has an axiomatization in $L_{\lambda^{+},\lambda^{+}}$ with game quantification. These extend Kueker's \cite{kue2} result which assumes finite character and $\lambda=\al$.
\item If $K$ is universal and categorical in $\lambda$, then it is axiomatizable in $L_{\lambda^+,\lambda^+}$.
\item Shelah's celebrated presentation theorem asserts that for any AEC ${\bf K}$ there is a first-order theory in an expansion of $\llk$, and a set $\Gamma$ of $2^\lambda$ many $T$-types such that $K=PC(T,\Gamma,\llk)$. We provide a better bound on $|\Gamma|$ in terms of $I_2(\lambda,{\bf K})$.
\item We present additional applications which extend, simplify and generalize results of Shelah \cite{sh88,sh576} and Shelah-Vasey \cite{ss}. Some of our main results generalize to $\mu$-AECs.
\end{enumerate}

\end{abstract}
\vspace{1em}
\begin{center}
{\bfseries TABLE OF CONTENTS}
\end{center}
\tableofcontents

\section{Introduction}
In the proof of Shelah's presentation theorem \cite[I Lemma 1.9]{shh}, functions are added to capture isomorphism axioms and L\"{o}wenheim-Skolem axiom. \cite[Theorem 2.1]{sv} claimed that any abstract elementary class (AEC) ${\bf K}$ can be axiomatized by an $L_{\beth_{2}(\lambda)^{+3},\lambda^+}$ sentence where $\lambda$ is the L\"{o}wenheim-Skolem number, and such an axiomatization is in $\llk$.

Let $\chi\defeq\lambda+I_2(\lambda,{\bf K})$, where $I_2(\lambda,{\bf K})$ is the number of nonisomorphic pairs $(M,N)$ such that $M,N\in K_\lambda$ and $M\lk N$. In \ref{charmod}, we will axiomatize an AEC ${\bf K}$ by a sentence $\sigma_{\bf K}$ in $L_{\chi^+,\lambda^+}$, allowing game quantification. As $\chi\leq2^\lambda$, we have that $\sigma_{\bf K}$ is in $L_{(2^\lambda)^+,\lambda^+}$, improving Shelah and Villaveces' result. Modulo the use of game quantification, our result is optimal for uncountable $\lambda$ as it is known that there is an AEC that cannot be axiomatized by an $L_{\infty,\lambda}$ sentence \cite{hsimon}. Under extra assumptions, we can axiomatize an AEC by a sentence in  $L_{\lambda^{+},\lambda^{+}}$ (\ref{charuni} and \ref{charcat2}). By slightly modifying $\sigma_{\bf K}$, we can encode the ${\bf K}$-substructure relation (\ref{charprop} and \ref{charsub}) by a formula $\sigma_{\leq}$ in $L_{\chi^+,\lambda^+}$. As above, we can improve the results under extra assumptions (\ref{charpropcor}).

As an application of our axiomatization of AECs, we derive a variation on the presentation theorem, where any AEC is a $PC_\chi$ class (\ref{presvar}) and game quantification is not used. Our presentation theorem is stronger than Shelah's as the bound of $|\Gamma|$ in some cases is smaller than $2^{\ls}$. It also lowers the threshold of the existence theorem from successive categoricity (\ref{presvarsuc}). The axiomatization strategy is also applicable to the $\mu$-AEC analogs, giving a stronger presentation theorem (\ref{presvarmu}) than \cite[Theorem 3.2]{muaec}.

In the following, we provide two tables. The first table summarizes the known results in literature. The definition of $L(\omega)$ can be found in \cite[Definition 1.12]{kue2} and $L_{\chi^+,\lambda^+}(\omega\cdot\omega)$ is defined in \ref{gamedef}. The second table summarizes the main results in this paper. We write $AL=$ arbitrarily large models, $AP=$ amalgamation property, $JEP=$ joint embedding property, $NMM=$ no maximal models. Monster model means $AP+JEP+NMM$. The entries of the second table link to the related theorems (see \ref{charobs} for cases where we do not assume $\lambda$-categoricity).

\pagebreak

\begin{table}[t!]\centering
\begin{center}\underline{Known results}\end{center}
\begin{tabular}{|l|l|l|}
\hline
Assumptions on ${\bf K}$ &${K}$ is&References\\ \hline
None&axiomatizable in $L_{(2^{2^{\lambda^+}})^{+++},\lambda^+}$&\cite[Theorem 2.1]{sv}\\ \hline 
None&$PC_{\lambda,2^\lambda}$&\cite[Lemma 1.8]{sh88}\\ \hline
None& reducts of a theory in $L'_{\chi^+,\lambda^+}$ &\cite[Theorem 3.2.3]{bbhanf}\\ 
&where $L'\supseteq L$&\\ \hline
$\lambda=\aleph_0$, $\al$-$AP$, stable in $\al$,&$PC_{\al}$&\cite[Theorem 4.2]{ss}\\ 
$I(\al,{\bf K})\leq\al$&&\\\hline
$\lambda=\al$&closed under $\equiv_{\infty,\omega_1}$&\cite[Theorem 2.5]{kue2}\\ \hline
$\lambda=\al$, $\exists\kappa=\kappa^{\al}(I(\kappa,{\bf K})\leq\kappa)$&axiomatizable in $L_{\infty,\omega_1}$&\cite[Theorem 2.11]{kue2}\\ \hline
$\lambda=\al$, $\exists\kappa=\kappa^{\al}(I(<\kappa,{\bf K})\leq\kappa)$&axiomatizable in $L_{\kappa^+,\omega_1}$&\cite[Theorem 2.11]{kue2}\\ \hline
Finitary, $\lambda=\al$&closed under $\equiv_{\infty,\omega}$&\cite[Theorem 3.4]{kue2}\\ \hline
Finitary&closed under $\equiv_{\infty,\omega}$&\cite[Theorem 3.7]{gj}\\ \hline
Finitary, $\lambda=\al$&axiomatizable in $L(\omega)$&\cite[Theorem 3.7]{kue2}\\ \hline
Finitary, $\lambda=\al$, $\exists\kappa(I(\kappa,{\bf K})\leq\kappa)$&axiomatizable in $L_{\infty,\omega}$&\cite[Theorem 3.10]{kue2}\\ \hline
Finitary, $\exists\kappa=\kappa^{<\lambda}(I(\kappa,{\bf K})\leq\kappa)$&axiomatizable in $L_{\infty,\lambda}$&\cite[Theorem 3.10]{gj}\\ \hline
Finitary, $\lambda=\al$,&axiomatizable in $L_{\kappa^+,\omega}$&\cite[Theorem 3.10]{kue2}\\ 
$\exists\kappa(I(<\kappa,{\bf K})\leq\kappa)$&&\\\hline
Finitary, $\exists\kappa=\kappa^{<\lambda}(I(<\kappa,{\bf K})\leq\kappa)$&axiomatizable in $L_{\kappa^+,\lambda}$&\cite[Theorem 3.10]{gj}\\ \hline
$\lambda=\al$, $\exists\kappa(I(\kappa,{\bf K})=1)$&$K_{\geq\kappa}$ is closed under $\equiv_{\infty,\omega}$&\cite[Theorem 5.1]{kue2}\\ \hline
Finitary, $\lambda=\al$, $\exists\kappa(I(\kappa,{\bf K})=1)$&$K_{\geq\kappa}$ axiomatizable in $L_{\omega_1,\omega}$&\cite[Theorem 5.2]{kue2}\\ \hline
$\lambda=\al$, $\exists\kappa=\kappa^{\al}(I(\kappa,{\bf K})=1)$&Models in $K_{\geq\kappa}$ are $\equiv_{\infty,\omega_1}$&\cite[Theorem 5.3a]{kue2}\\ \hline
Previous row $+$ monster model&$K_{\geq\kappa}$ axiomatizable in $L_{(2^\omega)^+,\omega_1}$&\cite[Theorem 5.3c]{kue2}\\ \hline
$\lambda>\al$ &$K$ is closed under $\equiv_{\infty,\lambda^+}$&\cite[Theorem 7.2]{kue2}\\ \hline
Monster model, $\lambda>\al$, &$K_{\geq\kappa}$ axiomatizable in $L_{{2^\kappa}^+,\kappa^+}$&\cite[Theorem 7.4]{kue2}\\
$\exists\kappa(I(\kappa,{\bf K})=1\wedge\cf(\kappa)>\lambda)$&&\\\hline
Finitary, monster model, &$K_{\geq\kappa}$ axiomatizable in $L_{\infty,\lambda}$&\cite[Theorem 3.11]{gj}\\
$\exists\kappa(I(\kappa,{\bf K})=1\wedge\cf(\kappa)>\lambda)$&&\\\hline
\end{tabular}
\end{table}
\pagebreak

\begin{table}[t!]\centering
\begin{center}\underline{New results}\end{center}
\begin{tabular}{|l|l|l|}
\hline
Assumptions on ${\bf K}$ & Axiomatization in & $K$ is \\ \hline
None & \hyperref[charmod]{$L_{\chi^+,\lambda^+}(\omega\cdot\omega)$}&\hyperref[presvar]{$PC_{\chi}$}\\ \hline
Universal class, $I(\lambda,{\bf K})\leq\lambda$ & \hyperref[charuni]{$L_{\lambda^+,\lambda^+}$} & \hyperref[presvarcot]{$PC_\lambda$} \\ \hline
$AL$, $\lambda$-$AP$, $I(\lambda,{\bf K})\leq\lambda$, $I(\lambda^+,{\bf K})=1$ &   \hyperref[charcat2]{$L_{\lambda^{+},\lambda^{+}}(\omega\cdot\omega)$}&\hyperref[presvarcot]{$PC_{\lambda}$}\\ \hline
$2^\lambda<2^{\lambda^+}$, $AL$, $I(\lambda,{\bf K})=1$, $I(\lambda^+,{\bf K})=1$  &   \hyperref[charcat2cor]{$L_{\lambda^{+},\lambda^{+}}(\omega\cdot\omega)$}&\hyperref[presvarcot]{$PC_{\lambda}$}\\\hline
$\lambda$-$AP$, $I(\lambda,{\bf K})\leq\lambda$, $I(\lambda^+,{\bf K})=1$&   \hyperref[charcat2]{$L_{\lambda^{+},\lambda^{+}}(\omega\cdot\omega)$}&\hyperref[presvarcot]{$PC_{\lambda}$}\\ 
stable in $\lambda$  &&\\\hline
$2^\lambda<2^{\lambda^+}$, $I(\lambda,{\bf K})=1$, $I(\lambda^+,{\bf K})=1$, &   \hyperref[charcat2cor]{$L_{\lambda^{+},\lambda^{+}}(\omega\cdot\omega)$}&\hyperref[presvarcot]{$PC_{\lambda}$}\\
stable in $\lambda$&&\\\hline
\end{tabular}
\end{table}

\pagebreak

Note that the last row is a significant improvement of \cite[Theorem 4.2]{ss}, using much simpler and general methods while covering the case when $\lambda$ is uncountable.
We highlight the differences between our result and \cite[Theorem 3.2.3]{bbhanf}:
\begin{enumerate}
\item They expand the base vocabulary to $\tau^*$ by adding new predicates of arity $\lambda$, and their theory $T^*$ in the expanded language is more semantic and longer; our axiomatization keeps the original language $\llk$ and is purely syntactic. 
\item Their relational presentation theorem characterizes $K$ as reducts of models of $T^*$, ${\bf K}$-substructure as reducts of $\tau^*$-substructure; our axiomatization is simply in $L_{\chi^+,\lambda^+}(\omega\cdot\omega)$ and we pin down the formula that determines ${\bf K}$-substructure. 
\item Their expanded language $\tau^*$ has size $\chi$; our axiomatization uses the original language so has size $\leq\lambda$ (but both approaches require taking $\chi$-conjunctions). We expand the language to size $\chi$ only when we derive a variation on Shelah's presentation theorem. 
\item In \cite[Theorem 3.2.3]{bbhanf}, they do not require types to be omitted because their theory $T^*$ is in the infinitary logic. We omit types in our variation to Shelah's presentation theorem so as to represent $K$ as a first-order $PC$ class.
\end{enumerate}

Our approach in this paper was inspired by Villaveces' question of the complexity of the example in \cite[Proposition 4.1]{leung1}, which has high instability but low complexity $\gamma=1$. Also, Grossberg suggested in May 2021 that \cite{sv} could have a significant improvement. This motivated us to look for a simpler axiomatization of an AEC, without using trees or other combinatorial machinery in \cite[Theorem 2.4]{sv}. At the cost of game quantification, we lower the complexity of junctions in their paper from $\beth_{2}(\lambda)^{+3}$ to $\chi^+$.

This paper was written while the author was working on a Ph.D. under the direction of Rami Grossberg at Carnegie Mellon University and we would like to thank Prof. Grossberg for his guidance and assistance in my research in general and in this work in particular.

\section{Preliminaries}
Let $L$ be a finitary language, $\lambda_1\geq\lambda_2$ be infinite cardinals. We write $L_{\lambda_1,\lambda_2}$ the set of formulas generated by $L$, allowing $<\lambda_2$ free variables and $<\lambda_2$ quantifiers, in addition to conjunctions and disjunctions of $<\lambda_1$ subformulas. Given an $L$-structure $M$, we write $|M|$ the universe of $M$ and $\nr{M}$ the cardinality of $M$.

\begin{definition}
Let $L\subseteq L'$ be two langauges (they can be infinitary), $T$ be an $L'$-theory and $\Gamma$ be a set of $L'$-types. Let $\mu$ be a regular cardinal. If $L,L'$ are $(<\mu)$-ary, we define 
\begin{align*}
EC^\mu(T,\Gamma)&\defeq\{M:M\text{ is an $L'$-structure},M\vDash T\ ,M\text{ omits }\Gamma\}\\
PC^\mu(T,\Gamma)&\defeq\{M\restriction L:M\text{ is an $L'$-structure},M\vDash T\ ,M\text{ omits }\Gamma\}
\end{align*}
When $\mu=\al$, we omit the superscript $\al$. 

Let $\lambda,\chi$ be infinite cardinals, and assume $|T|\leq\lambda$ and $|\Gamma|\leq\chi$. If $K=EC^\mu(T,\Gamma)$, we call $K$ an $EC^\mu_{\lambda,\chi}$ class. If $K=PC^\mu(T,\Gamma,L)$, we call $K$ a $PC^\mu_{\lambda,\chi}$ class. We omit the superscript $\al$ when $\mu=\al$. We omit $\lambda,\chi$ if the sizes of $T$ and $\Gamma$ are not specified. $PC_\lambda$ stands for $PC_{\lambda,\lambda}$.
\end{definition}

\begin{definition}\mylabel{aecdef}{Definition \thetheorem}
Let $L$ be a finitary language. An abstract elementary class ${\bf K}=\langle K,\lk\rangle$ in $L$ satisfies the following axioms:
\begin{enumerate}
\item $K$ is a class of $L$-structures and $\lk$ is a partial order on $K\times K$.
\item For $M_1,M_2\in K$, $M_1\lk M_2$ implies $M_1\subseteq M_2$ (as $L$-substructures).
\item Isomorphism axioms:
\begin{enumerate}
\item If $M\in K$, $N$ is an $L$-structure, $M\cong N$, then $N\in K$.
\item Let $M_1,M_2,N_1,N_2\in K$. If $f:M_1\cong M_2$, $g:N_1\cong N_2$, $g\supseteq f$ and $M_1\lk N_1$, then $M_2\lk N_2$.
\end{enumerate}
\item Coherence: Let $M_1,M_2,M_3\in K$. If $M_1\lk M_3$, $M_2\lk M_3$ and $M_1\subseteq M_2$, then $M_1\lk M_2$.
\item L\"{o}wenheim-Skolem axiom: There exists an infinite cardinal $\lambda\geq|\llk|$ such that: for any $M\in K$, $A\subseteq |M|$, there is some $N\in K$ with $A\subseteq|N|$, $N\lk M$ and $\nr{N}\leq\lambda+|A|$. We call the minimum such $\lambda$ the L\"{o}wenheim-Skolem number $\ls$. 
\item Chain axioms: Let $\alpha$ be an ordinal and $\langle M_i:i<\alpha\rangle\subseteq K$ such that for $i<j<\alpha$, $M_i\lk M_j$.
\begin{enumerate}
\item Then $M\defeq\bigcup_{i<\alpha}M_i$ is in $K$ and for all $i<\alpha$, $M_i\lk M$.
\item Let $N\in K$. If in addition for all $i<\alpha$, $M_i\lk N$, then $M\lk N$.
\end{enumerate}
\end{enumerate}
Let $\lambda\geq\ls$ be a cardinal. We define $K_{\lambda}\defeq\{M\in K:\nr{M}=\lambda\}$ and ${\bf K_{\lambda}}\defeq\langle K_{\lambda},\lk\restriction K_{\lambda}\times K_{\lambda}\rangle$. When the context is clear, we omit the subscript of $\lk$ and write $\leq$. We will only consider ${\bf K_{\geq\ls}}$ in place of ${\bf K}$, which is still an AEC.
\end{definition}
\begin{definition}
\begin{enumerate}
\item Let $I$ be an index set. A directed system $\langle M_i:i\in I\rangle\subseteq K$ indexed by $I$ satisfies the following: for any $i,j\in I$, there is $k\in I$ such that $M_i\leq M_k$ and $M_j\leq M_k$.
\item Let $\mu$ be an infinite cardinal. A $\mu$-directed system $\langle M_i:i\in I\rangle\subseteq K$ indexed by $I$ satisfies the following: for any $J\subseteq I$ of size $<\mu$, there is $k\in I$ such that for all $j\in J$, $M_j\leq M_k$, (thus a system is directed iff it is $\al$-directed.)
\end{enumerate}
\end{definition}
\begin{fact}\cite[I Observation 1.16]{shh}\mylabel{dirfact}{Fact \thetheorem}\hfill
 Let $\langle M_i:i\in I\rangle\subseteq K$ be a directed system. Then
\begin{enumerate}
\item $M\defeq \bigcup_{i\in I}M_i\in K$
\item For all $i\in I$, $M_i\leq M$.
\item Let $N\in K$. If in addition for all $i\in I$, $M_i\leq N$, then $M\leq N$. 
\end{enumerate}
\end{fact}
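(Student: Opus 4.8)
The plan is to induct on $\kappa\defeq\card{I}$, and at each stage to replace the directed system by a $\leq$-increasing well-ordered chain so that the chain axioms in \ref{aecdef} apply directly.

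For the base case $\kappa\leq\al$ I would enumerate $I=\{i_n:n<\kappa\}$ and recursively pick $j_n\in I$ with $j_0=i_0$ and, for $n\geq1$, $M_{i_n},M_{j_{n-1}}\leq M_{j_n}$ (possible by directedness). Then $\langle M_{j_n}:n<\kappa\rangle$ is a $\leq$-increasing chain with $\bigcup_n M_{j_n}=\bigcup_{i\in I}M_i$ (each $M_{i_n}\subseteq M_{j_n}$), so clauses (1)--(3) fall out of the chain axioms and transitivity of $\leq$; e.g.\ for (2), $M_{i_n}\leq M_{j_n}\leq\bigcup_m M_{j_m}$.

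For the inductive step, assuming $\kappa>\al$ and the statement for all index sets of size $<\kappa$, I would enumerate $I=\{i_\alpha:\alpha<\kappa\}$ and build an increasing continuous sequence $\langle J_\alpha:\alpha<\kappa\rangle$ of subsets of $I$ such that each $\langle M_i:i\in J_\alpha\rangle$ is directed, $i_\alpha\in J_{\alpha+1}$, and $\card{J_\alpha}\leq\card{\alpha}+\al$ --- at successors by closing $J_\alpha\cup\{i_\alpha\}$ under a choice of upper bounds in $\omega$ steps, at limits by unions. Setting $N_\alpha\defeq\bigcup_{i\in J_\alpha}M_i$, the induction hypothesis applies (as $\card{J_\alpha}<\kappa$) and gives $N_\alpha\in K$ with $M_i\leq N_\alpha$ for $i\in J_\alpha$; in particular, for $\beta<\alpha$ every $M_i$ with $i\in J_\beta$ is $\leq N_\alpha$, so clause (3) of the induction hypothesis applied to $\langle M_i:i\in J_\beta\rangle$ (with $N_\alpha$ as $N$) yields $N_\beta\leq N_\alpha$. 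Thus $\langle N_\alpha:\alpha<\kappa\rangle$ is a $\leq$-increasing chain, and since $\bigcup_\alpha J_\alpha=I$ we get $\bigcup_\alpha N_\alpha=\bigcup_{i\in I}M_i\eqdef M$. The chain axioms then give $M\in K$ with $N_\alpha\leq M$, hence $M_i\leq N_{\alpha+1}\leq M$ when $i=i_\alpha$ (clause (2)); and if $M_i\leq N$ for all $i$, then clause (3) of the induction hypothesis applied to each $\langle M_i:i\in J_\alpha\rangle$ gives $N_\alpha\leq N$, so $M\leq N$ by the chain axioms (clause (3)).

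I expect the one genuinely delicate point to be the cardinal arithmetic at limit stages: merely requiring $\card{J_\alpha}<\kappa$ breaks down when $\kappa$ is singular (for instance $\kappa=\aleph_\omega$ with $\card{J_n}=\aleph_n$ forces $\card{J_\omega}=\kappa$), so the bound $\card{J_\alpha}\leq\card{\alpha}+\al$ has to be carried through the recursion and checked to be preserved at successors, at limits, and under the $\omega$-step closure. Everything else should be routine; note in particular that coherence is never needed, only transitivity of $\leq$ and the chain axioms.
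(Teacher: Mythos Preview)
The paper does not prove this statement: it is recorded as a \emph{Fact} with a citation to Shelah and no argument is supplied, so there is nothing in the paper to compare your proof against.

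On its own merits your argument is the standard one and is correct. The reduction of a directed union to a well-ordered $\leq$-chain by induction on $|I|$ is exactly how this is usually done, and your anticipation of the singular-$\kappa$ issue via the explicit bound $|J_\alpha|\leq|\alpha|+\aleph_0$ is the right fix: it is preserved at successors (the $\omega$-step closure adds at most $|J^k|^2=|J^k|$ elements per step once $|J^k|$ is infinite) and at limits (since $\sum_{\beta<\alpha}(|\beta|+\aleph_0)\leq|\alpha|\cdot|\alpha|=|\alpha|$ for limit $\alpha$). The only cosmetic omission is that you do not say what $J_0$ is; taking $J_0=\emptyset$ (so $N_0$ is the empty union, which one may simply drop from the chain) or $J_0$ a countable directed subset containing $i_0$ both work and fit your bound. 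You are also right that coherence is never invoked.
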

\begin{fact}\cite[II Claim 1.8(2)]{shh}\mylabel{dirfact2}{Fact \thetheorem} If $M\leq N$ in $K$, then there are index sets $I_1$ and $I_2$, directed systems $\langle M_i:i\in I_1\rangle$ and $\langle N_i:i\in I_2\rangle$ of union $M$, $N$ respectively, $I_1\subseteq I_2$ and $M_i=N_i$ for all $i\in I_1$.
\end{fact}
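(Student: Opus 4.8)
The plan is to exhibit $M$ and $N$ as directed unions of their $\lk$-submodels of size $\lambda\defeq\ls$, using the submodels themselves as indices so that the required agreement on the common index set is automatic. Concretely, set $I_1\defeq\{M'\in K_\lambda:M'\lk M\}$ and $I_2\defeq\{N'\in K_\lambda:N'\lk N\}$, and let $M_{M'}\defeq M'$ for $M'\in I_1$ and $N_{N'}\defeq N'$ for $N'\in I_2$. Since $\lk$ is a partial order, hence transitive, and $M\lk N$, any $M'\lk M$ also satisfies $M'\lk N$; as $\nr{M'}=\lambda$ this yields $M'\in I_2$. Thus $I_1\subseteq I_2$ and $M_{M'}=M'=N_{M'}$ for every $M'\in I_1$.

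It remains to verify that the two families are directed systems with the correct unions. For directedness of $\langle N_i:i\in I_2\rangle$, given $N',N''\in I_2$ the L\"owenheim--Skolem axiom applied to $N$ with $A\defeq|N'|\cup|N''|$ produces $N'''\lk N$ with $A\subseteq|N'''|$ and $\nr{N'''}\le\lambda+|A|=\lambda$, so $N'''\in I_2$; since $N',N'''\lk N$ and $N'\subseteq N'''$ as $L$-substructures of $N$ (both being substructures of $N$ with $|N'|\subseteq|N'''|$), coherence gives $N'\lk N'''$, and symmetrically $N''\lk N'''$. The same computation inside $M$ shows $\langle M_i:i\in I_1\rangle$ is directed. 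For the unions, L\"owenheim--Skolem applied to a singleton shows every $a\in|N|$ belongs to some member of $I_2$, so $|N|=\bigcup_{i\in I_2}|N_i|$; since each $N_i\subseteq N$ and $\bigcup_{i\in I_2}N_i\in K$ by \ref{dirfact}, this union equals $N$, and likewise $\bigcup_{i\in I_1}M_i=M$. (When $\nr{M}=\lambda$ or $\nr{N}=\lambda$ the relevant model lies in its own index set and these points are immediate; recall we work inside ${\bf K_{\geq\ls}}$, so every model has size $\geq\lambda$ and both index sets are nonempty.)

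The argument is essentially bookkeeping: the only nontrivial inputs are the L\"owenheim--Skolem axiom (to manufacture enough small submodels and to make the family cofinal in $M$, respectively $N$) and coherence (to promote the inclusion $\subseteq$ to $\lk$, so the families are $\lk$-directed rather than merely $\subseteq$-directed), together with transitivity of $\lk$ for the inclusion $I_1\subseteq I_2$. I expect no real obstacle; the one design decision that matters is indexing by the submodels themselves, which is what makes $I_1\subseteq I_2$ and the agreement $M_i=N_i$ on $I_1$ hold on the nose rather than merely up to some choice of bijection.
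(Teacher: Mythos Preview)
Your argument is correct. Note that the paper does not supply its own proof of this statement: it is recorded as a \emph{Fact} with a citation to \cite[II Claim 1.8(2)]{shh}, so there is no in-paper proof to compare against. That said, your choice of indexing by the $\lambda$-sized $\lk$-submodels themselves is the standard way to prove this, and the verification you give (L\"owenheim--Skolem for cofinality and nonemptiness, coherence to upgrade $\subseteq$ to $\lk$, transitivity of $\lk$ for $I_1\subseteq I_2$) is exactly what is needed.
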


\begin{definition}\mylabel{gamedef}{Definition \thetheorem}
Let $L$ be a language, $\lambda,\chi$ be infinite cardinals and $\delta$ be an ordinal. $L_{\lambda,\chi}(\delta)$ extends $L_{\lambda,\chi}$ by allowing $\delta$-game quantification: if $\phi$ is a formula in $L_{\lambda,\chi}(\delta)$ with free variables $(x_\alpha,y_\alpha)_{\alpha<\delta}$ and $l(x_\alpha),l(y_\alpha)<\chi$, then $(\forall x_\alpha\exists y_\alpha)_{\alpha<\delta}\,\phi $ is a formula in $L_{\lambda,\chi}(\delta)$. An $L$-structure $M$ satisfies $(\forall x_\alpha\exists y_\alpha)_{\alpha<\delta}\,\phi $ if Player II has a winning strategy in the following game of $\delta$ rounds: in the $\alpha$-th round, Player I chooses some tuple $a_\alpha\subseteq M$ of length $l(x_\alpha)$ and Player II responds by choosing some tuple $b_\alpha\subseteq M$ of length $l(y_\alpha)$. Player II wins if $M\vDash\phi[a_\alpha,b_\alpha]_{\alpha<\delta}$ where for $\alpha<\delta$, $x_\alpha$ is substituted by $a_\alpha$ and $y_\alpha$ is substituted by $b_\alpha$.
\end{definition}
If $\delta$ above is finite, then $L_{\lambda,\chi}(\delta)=L_{\lambda,\chi}$. The use of game quantifiers in AECs can be found in \cite[Theorems 2.9, 3.7]{kue2} which handle the case $\ls=\al$. Our version is consistent with $L(\omega)$ there and is called a \emph{closed game quantifier} in \cite[Chapter X.2]{ko}.
\section{Encoding an AEC}
In this section, we fix an AEC ${\bf K}$ in a language $L$. 
\begin{definition}\mylabel{chap2ques1}{Definition \thetheorem}
\begin{enumerate}
\item Let $\lambda\geq\ls$. $I(\lambda,{\bf K})\defeq\{M/_{\cong}:M\in K_\lambda\}$.
\item Let $M_1\leq N_1$, $M_2\leq N_2$. We write $(M_1,N_1)\cong(M_2,N_2)$ if there exists $f:N_1\cong N_2$ such that $f\restriction M_1:M_1\cong M_2$. 
\item Let $\lambda\geq\ls$. $I_2(\lambda,{\bf K})\defeq\{(M,N)/_{\cong}:M\leq N\text{ in }K_\lambda\}$.
\end{enumerate}
\end{definition}
\begin{example}\mylabel{i2ex}{Example \thetheorem}
Depending on $\lambda$ and ${\bf K}$, $I(\lambda,{\bf K})$ and $I_2(\lambda,{\bf K})$ may not be the same:
\begin{enumerate}
\item If ${\bf K}$ is the class of the $L_{\omega_1,\omega}$ theory $\forall x\ \bigvee_{i<\omega}(x=c_i)$ where $c_i$ are constants, then $I(\al,{\bf K})=I_2(\al,{\bf K})=1$.
\item If ${\bf K}$ is the class of the first-order theory of pure equality, then $I(\lambda,{\bf K})=1$ but $I_2(\lambda,{\bf K})=\lambda$ for any infinite $\lambda$.
\item If ${\bf K}$ is the class of the first-order theory of dense linear orders without endpoints, then $I(\aleph_0,{\bf K})=1$ but $I_2(\aleph_0,{\bf K})=2^{\al}$. The latter is because one can choose $M=\mathbb{Q}$ while $N_r=\{a+br:a,b\in\mathbb{Q}\}$ where $r$ is an irrational number.
\end{enumerate}
\end{example}
\begin{proposition}\mylabel{quesprop}{Proposition \thetheorem}
$I(\lambda,{\bf K})\leq I_2(\lambda,{\bf K})\leq 2^{\lambda}$.
\end{proposition}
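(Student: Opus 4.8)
The plan is to establish the two inequalities separately, both by exhibiting the appropriate (injective) maps between the relevant sets of isomorphism classes.

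For the first inequality $I(\lambda,{\bf K})\leq I_2(\lambda,{\bf K})$, I would define a map sending the isomorphism class of $M\in K_\lambda$ to the isomorphism class of the pair $(M,M)$ (note $M\lk M$ by reflexivity of $\lk$, so $(M,M)$ is a legitimate element whose class is counted by $I_2(\lambda,{\bf K})$). This is well-defined: if $f\colon M\cong M'$, then $f$ witnesses $(M,M)\cong(M',M')$ in the sense of \ref{chap2ques1}(2). It is injective: if $(M,M)\cong(M',M')$ via some $g\colon M\cong M'$ with $g\restriction M\colon M\cong M'$, then in particular $M\cong M'$. Hence $I(\lambda,{\bf K})\leq I_2(\lambda,{\bf K})$.

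For the second inequality $I_2(\lambda,{\bf K})\leq 2^\lambda$, I would fix a set $U$ of size $\lambda$ and observe that every pair $(M,N)$ with $M\lk N$ in $K_\lambda$ is isomorphic (as a pair) to one whose universe $|N|$ is exactly $U$: apply an arbitrary bijection $|N|\to U$ and transport the structure via the isomorphism axiom \ref{aecdef}(3). So it suffices to bound the number of pairs $(M,N)$ with $|N|=U$ and $M\lk N$. Such a pair is determined by the following data: the interpretation in $N$ of each symbol of $L$ (there are $\leq\lambda$ symbols, each of finite arity, so each interpretation is a subset of some $U^n$, and there are $\leq 2^\lambda$ choices per symbol, hence $\leq(2^\lambda)^\lambda=2^\lambda$ choices total), together with the universe $|M|\subseteq U$ of the substructure (at most $2^\lambda$ choices). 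Once $N$ and the set $|M|$ are fixed, $M$ itself is determined as the induced $L$-substructure (by \ref{aecdef}(2), $M\subseteq N$), and whether $M\lk N$ holds is a yes/no question, not a further choice. Therefore the number of such pairs is $\leq 2^\lambda\cdot 2^\lambda=2^\lambda$, and a fortiori $I_2(\lambda,{\bf K})\leq 2^\lambda$.

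Neither step presents a real obstacle; the only points requiring a little care are checking that the pairing map is well-defined and injective with respect to the equivalence in \ref{chap2ques1}(2), and being slightly careful in the counting to note that fixing $N$ and the subset $|M|$ already pins down $M$ (via coherence/substructure), so that the $\lk$-relation itself does not contribute an extra factor. The bound $(2^\lambda)^\lambda=2^\lambda$ uses $|L|\leq\lambda$, which holds since $|\llk|\leq\ls=\lambda$ by \ref{aecdef}(5).
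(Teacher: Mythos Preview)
Your proof is correct and follows essentially the same approach as the paper. For the first inequality the paper observes that $N_1\not\cong N_2$ implies $(M,N_1)\not\cong(M,N_2)$, which is the same injectivity you establish via $M\mapsto(M,M)$; for the second inequality the paper writes the bound as $I_2(\lambda,{\bf K})\leq\lambda^\lambda\cdot I(\lambda,{\bf K})\leq 2^\lambda$, invoking the known $I(\lambda,{\bf K})\leq 2^\lambda$ rather than recounting $L$-structures on a fixed universe, but this is only a cosmetic difference in how the counting is organized.
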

\begin{proof}
For any $M\leq N_1$ and $M\leq N_2$ in $K$, if $N_1\not\cong N_2$, then $(M,N_1)\not\cong (M,N_2)$ by definition, hence the first inequality. Using the fact that $I(\lambda,{\bf K})\leq2^{\lambda}$, we can bound $I_2(\lambda,{\bf K})\leq \lambda^\lambda\cdot I(\lambda,{\bf K})\leq 2^\lambda$. 
\end{proof}
\begin{ques}
Assuming stability or categoricity, is it possible to obtain a better bound than \ref{quesprop}?
\end{ques}

Until the end of this section, we write $\lambda\defeq\ls$. 

\begin{obs}
List the representatives of $\{M/_{\cong}:M\in K_\lambda\}$ by $\langle M_i:i<I(\lambda,{\bf K})\rangle$ and those of $\{(M,N)/_{\cong}:M\leq N\text{ in }K_\lambda\}$ by $\langle (M_j,N_j):j<I_2(\lambda,{\bf K})\rangle$. For $i<I(\lambda,{\bf K})$, let $\phi_i(x)$ be an $L_{\lambda^+,\lambda^+}$ formula that encodes the isomorphism type of $M_i$ with a fixed enumeration of the universe $|M_i|=\{ m_k^i:k<\lambda\}$. For variables $x=\langle x_k:k<\lambda\rangle$,
\begin{align*}
\phi_i(x)\defeq\bigwedge\{&\theta(x_{\alpha_0},\dots,x_{\alpha_{s-1}}):M_i\vDash \theta[m_{\alpha_0}^i,\dots,m_{\alpha_{s-1}}^i],s<\omega, \alpha_0,\dots,\alpha_{s-1}<\lambda,\\&\theta\text{ is an atomic $L$-formula or its negation with $s$ free variables}\}
\end{align*}
Namely for any $L$-structure $N$ and any $a\in|N|$ of length $\lambda$, if $N\vDash\phi_i[a]$ then $a\cong M_i$ (with the fixed enumeration). 

Similarly, for $j<I_2(\lambda,{\bf K})$, let $\psi_j(x,y)$ be an $L_{\lambda^+,\lambda^+}$ formula that encodes the isomorphism type of $(M_j,N_j)$ with fixed enumerations, where $|M_j|=\langle m_k^j:k<\lambda\rangle$, $|N_j|=\{ n_k^j:k<\lambda\}$. For variables $x=\langle x_k:k<\lambda\rangle$ and $y=\langle y_k:k<\lambda\rangle$,
\begin{align*}
\psi_j(x;y)\defeq\bigwedge\{&\theta(x_{\alpha_0},\dots,x_{\alpha_{s-1}};y_{\beta_0},\dots,y_{\beta_{t-1}}):N_j\vDash \theta[m_{\alpha_0}^j,\dots,m_{\alpha_{s-1}}^j;n_{\beta_0}^j,\dots,n_{\beta_{t-1}}^j],\\& s<\omega,\ t<\omega,\ \alpha_0,\dots,\alpha_{s-1},\beta_0,\dots,\beta_{t-1}<\lambda,\\&\theta\text{ is an atomic $L$-formula or its negation with $s+t$ free variables}\}
\end{align*}
Namely for any $L$-structure $N$ and any $a,b\in|N|$ both of length $\lambda$, if $N\vDash\psi_j[a,b]$ then $a\cong M_j$, $b\cong N_j$ (with the fixed enumerations) and $\ran(a)\subseteq\ran(b)$.

It is also possible to encode the re-enumerations of the isomorphism types in $\phi_i$ and $\psi_j$, but we will do that directly in the sentence $\sigma_{\bf K}$ in \ref{charmod} and $\sigma_\leq$ in \ref{charsub}(1), so as to be more consistent with the format of \ref{presvar}.
\end{obs}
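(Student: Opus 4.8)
There are two things to verify: that $\phi_i$ and $\psi_j$ genuinely lie in $L_{\lambda^+,\lambda^+}$, and the two ``namely'' clauses. For the first, I would observe that $|L| = |\llk| \leq \lambda$, so the collection of atomic $L$-formulas and their negations with free variables drawn from $\{x_k : k < \lambda\}$ --- each such formula using only finitely many variables --- has cardinality at most $|L|\cdot\lambda^{<\omega} = \lambda$, and likewise for free variables drawn from $\{x_k : k<\lambda\}\cup\{y_k : k<\lambda\}$. Hence each of $\phi_i,\psi_j$ is a conjunction of $\leq\lambda < \lambda^+$ quantifier-free formulas in $\leq\lambda < \lambda^+$ free variables with no quantifiers, so it belongs to $L_{\lambda^+,\lambda^+}$. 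This cardinality count is the only place the hypothesis $|\llk|\leq\lambda$ is used.

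For the clause on $\phi_i$: given an $L$-structure $N$ and $a = \langle a_k : k<\lambda\rangle \in {}^{\lambda}|N|$ with $N\vDash\phi_i[a]$, I would first note that, since the conjuncts of $\phi_i$ include for each atomic $\theta$ and each index tuple whichever of $\theta,\neg\theta$ holds in $M_i$, satisfaction of $\phi_i$ by $a$ amounts to the full condition: $M_i\vDash\theta[m^i_{\alpha_0},\dots,m^i_{\alpha_{s-1}}]$ iff $N\vDash\theta[a_{\alpha_0},\dots,a_{\alpha_{s-1}}]$ for every atomic $\theta$ and all $\alpha_0,\dots,\alpha_{s-1}<\lambda$. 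I would then define $\pi\colon|M_i|\to|N|$ by $m^i_k\mapsto a_k$; this is well defined because $\langle m^i_k : k<\lambda\rangle$ lists the $\lambda$ distinct elements of $M_i$, and injective because $\neg(x_k = x_{k'})$ is a conjunct of $\phi_i$ whenever $k\neq k'$. Since equalities of $L$-terms are atomic formulas, the displayed equivalence forces $\ran(a)$ to be closed under the interpretations of all function and constant symbols of $L$, forces $\pi$ to commute with those, and forces $\pi$ to preserve and reflect all relation symbols; hence $\pi$ is an isomorphism of $M_i$ onto the $L$-substructure of $N$ with universe $\ran(a)$, which is exactly what ``$a\cong M_i$ with the fixed enumeration'' abbreviates. (The converse implication, which is not asserted, is immediate.)

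For the clause on $\psi_j$ I would run the same argument three times. Restricting attention to conjuncts with $t = 0$ recovers the quantifier-free diagram of $M_j$ in the $x_k$, so $a\cong M_j$; symmetrically, conjuncts with $s=0$ give $b\cong N_j$. For $\ran(a)\subseteq\ran(b)$ I would use $M_j\leq N_j$, which gives $M_j\subseteq N_j$ as $L$-structures by the substructure axiom: for each $k<\lambda$ choose $l(k)<\lambda$ with $m^j_k = n^j_{l(k)}$; then $x_k = y_{l(k)}$ is true in $N_j$ under the fixed enumerations, hence is a conjunct of $\psi_j$, hence $N\vDash a_k = b_{l(k)}$ and $a_k\in\ran(b)$.

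The argument is routine; the only points requiring care are bookkeeping ones --- ensuring that the conjunctions range over atomic formulas in the full sense (including term equations, not merely atomic relational formulas), so that function and constant symbols are captured and $\ran(a)$ is genuinely a substructure, and, for $\psi_j$, ensuring the mixed $x,y$ atomic formulas appear among the conjuncts so the containment $\ran(a)\subseteq\ran(b)$ is recorded. I do not expect any substantial obstacle.
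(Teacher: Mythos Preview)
Your verification is correct and carefully done. The paper, however, does not supply a proof for this item at all: it is labelled an \emph{Observation} and treated as self-evident, serving merely to fix notation and record the intended meaning of $\phi_i$ and $\psi_j$ before they are used in the construction of $\sigma_{\bf K}$. So there is nothing in the paper to compare your argument against beyond the bare statement.

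That said, your write-up is a faithful unpacking of exactly what the author intends. The cardinality count for membership in $L_{\lambda^+,\lambda^+}$, the use of the equality/inequality conjuncts to force $\pi$ to be a bijection, the observation that term equations among the atomic conjuncts guarantee closure of $\ran(a)$ under the function symbols, and the extraction of $\ran(a)\subseteq\ran(b)$ from the mixed $x_k=y_{l(k)}$ conjuncts coming from $|M_j|\subseteq|N_j|$ --- all of these are the right checks, and you have flagged the only genuinely delicate point (that ``atomic'' must include term equations, not just relational atoms). Nothing is missing.
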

\begin{definition}
Let $\alpha,\beta<\lambda^+$, $a=\langle a_i:i<\alpha\rangle $ and $b=\langle b_i:i<\beta\rangle$. $a\subseteq b$ stands for $\ran(a)\subseteq\ran(b)$, which can be expressed by the $L_{\lambda^+,\lambda^+}$ formula $$\bigwedge_{i<\alpha}\bigvee_{j<\beta}a_i=b_j$$
$a\approx b$ stands for $\ran(a)=\ran(b)$, which can be expressed by the $L_{\lambda^+,\lambda^+}$ formula
$$\Big(\bigwedge_{i<\alpha}\bigvee_{j<\beta}a_i=b_j\Big)\wedge\Big(\bigwedge_{j<\beta}\bigvee_{i<\alpha}b_j=a_i\Big)$$
\end{definition}

\begin{mainthm}\mylabel{charmod}{Main Theorem \thetheorem}
$K$ is axiomatizable by an $L_{(\lambda+I_2(\lambda,{\bf K}))^+,\lambda^+}(\omega\cdot\omega)$ sentence $\sigma_{\bf K}$. In other words, for any $L$-structure $M$, $M\in K$ iff $M\vDash \sigma_{\bf K}$.
\end{mainthm}
\begin{proof}
The following variables $(x_\alpha,y_\alpha)_{\alpha<\omega\cdot\omega}$ are all of length $\lambda$.
\begin{align*}
\sigma_{\bf K}\defeq(\forall x_\alpha\exists y_\alpha)_{\alpha<\omega\cdot\omega}&\bigwedge_{\alpha<\omega\cdot\omega}\Big[(x_\alpha\subseteq y_\alpha)\wedge\exists z_\alpha\Big((y_\alpha\approx z_\alpha)\wedge\bigvee_{i<I(\lambda,{\bf K})}\phi_i(z_\alpha)\Big)\wedge\\
&\bigwedge_{\beta<\alpha<\omega\cdot\omega}\exists u_{\beta,\alpha}\exists v_{\beta,\alpha}\Big((y_\beta\approx u_{\beta,\alpha})\wedge(y_\alpha\approx v_{\beta,\alpha})\wedge\bigvee_{j<I_2(\lambda,{\bf K})}\psi_j(u_{\beta,\alpha},v_{\beta,\alpha})\Big)\Big]
\end{align*}
In words, $\sigma_{\bf K}$ stipulates the iterated use of L\"{o}wenheim-Skolem and coherence axioms $(\omega\cdot\omega)$ many times. 

Suppose $M\in K$, we show that Player II can win the associated game in $\sigma_{\bf K}$. In the $\alpha$-th round, Player I provides some $x_\alpha$ of length $\lambda$. By L\"{o}wenheim-Skolem axiom, pick any $y_\alpha\leq M$ of size $\lambda$ such that $\ran(x_\alpha)\cup\bigcup_{\beta<\alpha}\ran(y_\beta)\subseteq\ran(y_\alpha)$. By inductive hypothesis, for $\beta<\alpha$, we have $y_\beta\leq M$. By coherence axiom, $y_\beta\leq y_\alpha$ as desired.

Suppose $M\vDash\sigma_{\bf K}$. By \ref{dirfact}(1), it suffices to build a directed system $\langle M_a\in K_\lambda:a\in I\rangle$ of union $M$. We choose $I$ to be the set of finite tuples $a$ in $M$. Let $a,b$ be finite tuples in $M$, we order $a\leq_Ib$ iff $\ran(a)\subseteq\ran(b)$. We will inductively build all $M_a$ in $\omega$ stages. At stage $n$ we handle finite tuples of length $n+1$.

\begin{itemize}
\item Stage 0: apply $\sigma_{\bf K}$ to each singleton $s$ in $M$ and substitute $x_0=s$. We obtain $y_0=M_s$ which is a ${\bf K}$-structure. Only the $0$-th round of the game is used for each singleton.
\item Inductive hypothesis: for some $n<\omega$, $M_a$ has been constructed for each $l(a)\leq n+1$ with the following requirements:
\begin{enumerate}
\item For some $k<\omega$ and some singleton $s$, $M_a$ is the union $\bigcup_{\alpha<1+\omega\cdot k}y_\alpha$ where $y_\alpha$ comes from the game of $s$ (the ``$1+$'' is to handle the $k=0$ case). 
\item Let $a,b$ both of length $\leq n+1$. If $\ran(b)\subseteq\ran(a)$, then $M_b\leq M_a$.
\end{enumerate} 
\end{itemize}
Before we move onto the inductive step, we show that given $M_a$ and $M_b$ constructed in previous stages, we can find $M^*$ such that $M^*\geq M_a$ and $M^*\geq M_b$. By inductive hypothesis, there is a singleton $s$ in $M$, $m_s<\omega$ such that $M_a$ is the union $\bigcup_{\alpha<1+\omega\cdot m_s}y_\alpha$ from the game of $s$. Similarly, we can find some singleton $t$ in $M$ and $m_t<\omega$ such that $M_b$ is the union $\bigcup_{\alpha<1+\omega\cdot m_t}y_\alpha$ from the game of $t$. Using $\omega$ more rounds in the games of $s$ and of $t$, we recursively build $\langle N_k:k<\omega\rangle$ $\subseteq$-increasing such that 
\begin{enumerate}
\item $\langle N_{2l}:l<\omega\rangle$ and $\langle N_{2l+1}:l<\omega\rangle$ are both $\lk$-increasing. 
\item $N_{-1}\defeq M_a$ and $N_0\defeq M_b$. 
\item If $k=2l+1$, then use the $(1+\omega\cdot m_s+l)$-th round in the game of $s$ to obtain $y_{1+\omega\cdot m_s+l}=N_k$ from $x_{1+\omega\cdot m_s+l}=N_{k-1}$. Notice that if $m_s>0$, $N_{-1}\leq N_1$ by the second chain axiom.
\item If $k=2l+2$, then use the $(1+\omega\cdot m_t+l)$-th round in the game of $t$ to obtain $y_{1+\omega\cdot m_t+l}=N_k$ from $x_{1+\omega\cdot m_t+l}=N_{k-1}$. Notice that if $m_t>0$, $N_{0}\leq N_2$ by the second chain axiom.\\
\end{enumerate}
\adjustbox{scale=0.85,center}{
\begin{tikzcd}
\bigcup_{\alpha<1+\omega\cdot m_s}y_\alpha & x_{1+\omega\cdot m_s}                      & y_{1+\omega\cdot m_s}             & x_{1+\omega\cdot m_s+1}           & y_{1+\omega\cdot m_s+1}          &                         &        \\
M_a=N_{-1} \arrow[rr]                      &                                            & N_1 \arrow[rr] \arrow[rd, dashed] &                                   & N_3 \arrow[r] \arrow[rd, dashed] & \cdots                  &        \\
                                           & M_b=N_0 \arrow[rr]                         &                                   & N_2 \arrow[rr] \arrow[ru, dashed] &                                  & N_4 \arrow[r]           & \cdots \\
                                           & \bigcup_{\alpha<1+\omega\cdot m_t}y_\alpha & x_{1+\omega\cdot m_t}             & y_{1+\omega\cdot m_t}             & x_{1+\omega\cdot m_t+1}          & y_{1+\omega\cdot m_t+1} &       
\end{tikzcd}}
In the above diagram, a solid arrow stands for ${\bf K}$-substructure while a dashed arrow stands for $L$-substructure. The first row represents the game of $s$ while the last row represents the game of $t$. Each vertical column contains identical ${\bf K}$-structures.

Define $M^*\defeq\bigcup_{k<\omega}N_k$. By requirements (1), (2) and chain axioms, $M^*\geq M_a$ and $M^*\geq M_b$. Also, $s$ has used the first $\omega\cdot(m_s+1)$ rounds while $t$ has used the first $\omega\cdot(m_t+1)$ rounds. This finishes the construction of $M^*$.
\begin{itemize}
\item Stage $n+1$: Now for tuples $c$ of length $n+2$, we build $M_c$. Break down $c$ as the union of two tuples of length $\leq n+1$ (there might be more than one way), say $a$ and $b$. As above assume $M_a$ is generated by some singleton $s$ and $M_b$ by some singleton $t$. Then we can find $M^*$ with $M^*\geq M_a$,\ $M^*\geq M_b$ and $M^*$ is the union of bounded many $y_\alpha$'s of the game of some singleton $s$. We cannot immediately define $M_c\defeq M^*$ because $M^*$ depends on the choice of the decomposition $a,b$. Since there are finitely many possible decompositions of a finite tuple $c$, we can continue the game of $s$ and extend $M^*$ to $M_c$ which includes all $M^*$ from other decompositions of $c$ ($M_c$ might not be unique but it is a ${\bf K}$-superstructure to all those $M_a$ with $\ran(a)\subseteq\ran(c)$; $M_c$ is also generated by other games of singletons but we just need one representative $s$ for $M_c$).
\end{itemize}
After the construction is completed, $\langle M_a\in K_\lambda:a\in I\rangle$ is directed by our inductive step. Their union is $M$ because for each element $u$ in $M$, $u\in M_{\{u\}}$.
\end{proof}

\begin{remark}
Our theorem generalizes \cite[Theorems 2.9, 3.7]{kue2} which use the $\omega$-game quantification. Modulo the $(\omega\cdot\omega)$-game quantification, our result also generalizes \cite[Theorems 5.3, 7.4]{kue2} which assumes a monster model and categoricity in a higher cardinal.
\end{remark}

We will encode the ${\bf K}$-substructure relation in \ref{charsub}. Before that we investigate possible improvements of \ref{charmod}. The following questions were suggested by Grossberg:
\begin{ques}\mylabel{axiques}{Question \thetheorem}
Is it possible to axiomatize an AEC ${\bf K}$ in $L_{\lambda^+,\lambda^+}$ instead of $L_{(\lambda+I_2(\lambda,{\bf K}))^+,\lambda^+}$ (with or without game quantification), assuming
\begin{enumerate}
\item stability?
\item categoricity in $\lambda$ and $\lambda^+$? 
\end{enumerate}
\end{ques}
For (2), Grossberg also suggested that \cite{sm} should allow improvements of \ref{charmod}. Indeed it is possible when ${\bf K}$ is a universal class. A partial converse can be found in \cite[Theorem 3.5]{sm}.
\begin{proposition}\mylabel{charuni}{Proposition \thetheorem}
If ${\bf K}$ is a universal class, then it is axiomatizable by an $L_{(\lambda+I(\lambda,{\bf K}))^+,\lambda^+}$ sentence $\sigma_{\bf K}$. In particular if ${\bf K}$ is categorical in $\lambda$, then it is axiomatizable by an $L_{\lambda^+,\lambda^+}$ sentence.
\end{proposition}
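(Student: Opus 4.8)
The plan is to exploit the fact that a universal class is closed under unions of chains (indeed of arbitrary directed systems) and, crucially, under substructures — so the clumsy $(\omega\cdot\omega)$-game quantification and the pair-encoding formulas $\psi_j$ in \ref{charmod} can be dispensed with. I would define
\[
\sigma_{\bf K}\defeq(\forall x_\alpha)_{\alpha<\omega}\,\exists z\Big(\big(\textstyle\bigwedge_{\alpha<\omega}x_\alpha\subseteq z\big)\wedge\bigvee_{i<I(\lambda,{\bf K})}\phi_i(z)\Big),
\]
where the $x_\alpha$ are singletons (or short tuples) and $z$ has length $\lambda$; this is a first-order-style (no game) $L_{(\lambda+I(\lambda,{\bf K}))^+,\lambda^+}$ sentence, and in fact a $\forall\exists$ sentence. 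In words it says: every countable subset of $M$ is contained in the range of some $\lambda$-tuple whose quantifier-free diagram is that of a member of $K_\lambda$. When ${\bf K}$ is categorical in $\lambda$ there is only one $\phi_i$, so $I(\lambda,{\bf K})=1$ and the sentence lives in $L_{\lambda^+,\lambda^+}$, giving the "in particular" clause.

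For the forward direction, suppose $M\in K$. Given singletons $a_\alpha$ ($\alpha<\omega$), by the Löwenheim–Skolem axiom there is $N\lk M$ with $\{a_\alpha:\alpha<\omega\}\subseteq|N|$ and $\nr N=\lambda$; enumerating $|N|$ as a $\lambda$-tuple $b$, we have $b\cong M_i$ for the appropriate $i$, so $M\vDash\phi_i[b]$ and the witness $z=b$ works. (No recursion and no game are needed here precisely because we only need a single small substructure covering a countable set, not a coherent tower of them.)

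For the reverse direction, suppose $M\vDash\sigma_{\bf K}$. Here I would build a directed system of members of $K_\lambda$ with union $M$ and invoke \ref{dirfact}(1). Take $I$ to be the set of countable subsets of $|M|$ ordered by inclusion — this is $\al$-directed hence directed. For each countable $A\subseteq|M|$, apply $\sigma_{\bf K}$ to an enumeration of $A$ to get a $\lambda$-tuple $z_A$ with $A\subseteq\ran(z_A)$ and $M\vDash\phi_i(z_A)$ for some $i$; let $M_A$ be the $L$-substructure of $M$ with universe $\ran(z_A)$. Since $\phi_i$ pins down the quantifier-free diagram, $M_A\cong M_i\in K_\lambda$, so $M_A\in K_\lambda$ by the isomorphism axiom. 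The point where universality does the real work: if $A\subseteq B$ are countable, I need $M_A\lk M_B$. I have $M_A\subseteq M_B$ as $L$-substructures, and $M_A,M_B\in K$; by the axioms of a universal class an $L$-substructure of a member of $K$ that is itself in $K$ and sits inside another member is a $\lk$-substructure — more carefully, one uses that in a universal class $\lk$ coincides with the $L$-substructure relation restricted to $K$ (this is exactly the defining feature of universal classes, cf. \cite{sm}), so $M_A\subseteq M_B$ with both in $K$ already gives $M_A\lk M_B$. Then $\langle M_A:A\in I\rangle$ is a directed system in $K_\lambda$; its union contains every element $u\in|M|$ since $u\in M_{\{u\}}$, so the union is $M$, and by \ref{dirfact}(1), $M\in K$.

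The main obstacle is the last step — ensuring $M_A\lk M_B$ for $A\subseteq B$ — and it is exactly here that universality is indispensable: for a general AEC the $L$-substructure relation is strictly coarser than $\lk$, which is why \ref{charmod} had to encode $\psi_j$ and iterate Löwenheim–Skolem plus coherence through the game quantifier. So the proof amounts to observing that in a universal class this machinery collapses: one Löwenheim–Skolem application per countable set suffices, and coherence becomes automatic. A minor bookkeeping point to get right is the arity of the $x_\alpha$ and the choice of $\omega$ versus $\lambda$ many of them; using countably many singletons is enough because $K_\lambda$-membership of $M_A$ only requires $A$ countable in the Löwenheim–Skolem step, but one should check this is consistent with the stated logic $L_{(\lambda+I(\lambda,{\bf K}))^+,\lambda^+}$, which it is.
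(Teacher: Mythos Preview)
Your reverse direction has a genuine gap. You assert that for countable $A\subseteq B$ you ``have $M_A\subseteq M_B$ as $L$-substructures'', but nothing in your construction guarantees this: the witnesses $z_A$ and $z_B$ are produced by \emph{independent} applications of the existential quantifier in $\sigma_{\bf K}$, and there is no reason whatsoever for $\ran(z_A)\subseteq\ran(z_B)$. So $\langle M_A:A\in I\rangle$ is not shown to be a directed system, and the appeal to \ref{dirfact}(1) is unjustified.

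The gap is repairable, and the repair uses universality more fully than you did. For each finite $A\subseteq|M|$ set $N_A\defeq\langle A\rangle_M$, the $L$-substructure generated by $A$. Since $A\subseteq\ran(z_A)$ and $\ran(z_A)$ is an $L$-substructure of $M$ isomorphic to some $M_i\in K$, we get $N_A\subseteq\ran(z_A)\in K$; closure under substructures now gives $N_A\in K$. For $A\subseteq B$ we \emph{genuinely} have $N_A\subseteq N_B$, hence $N_A\lk N_B$, so $\langle N_A:A\text{ finite}\rangle$ is a directed system with union $M$, and \ref{dirfact}(1) applies. The paper sidesteps this issue differently: its $\sigma_{\bf K}$ carries a second conjunct explicitly demanding that any two $\lambda$-tuples realizing some $\phi_i$ sit inside a common one, so directedness is built into the sentence rather than recovered afterward from closure under substructures. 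With the fix above your single-conjunct sentence does suffice, so the paper's second conjunct is in fact redundant --- but your argument as written does not establish this.
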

\begin{proof}
Since models are ordered by $L$-substructures, we can avoid game quantification and replace the $\psi_j$'s by subset relations when defining $\sigma_{\bf K}$ in \ref{charmod}. Namely,
\begin{align*}
\sigma_{\bf K}\defeq\ &\forall a\ \exists m_a\Big(a\subseteq m_a\wedge\bigvee_{i<I(\lambda,{\bf K})}\phi_i(m_a)\Big)\wedge\\
&\forall m_b\,\forall m_c\Big[\Big(\bigvee_{i<I(\lambda,{\bf K})}\phi_i(m_b)\wedge\bigvee_{j<I(\lambda,{\bf K})}\phi_j(m_c)\Big)\\
&\rightarrow \exists m_d \Big(\bigvee_{k<I(\lambda,{\bf K})}\phi_k(m_d)\wedge (m_b\subseteq m_d)\wedge (m_c\subseteq m_d)\Big)\Big]
\end{align*}
\end{proof}

We also have some approximations to \ref{axiques}(2), using game quantification. In the following we abbreviate \emph{amalgamation property} as $AP$ and \emph{arbitrarily large models} as $AL$.
\begin{fact}\mylabel{axifact0}{Fact \thetheorem}
\begin{enumerate}
\item If ${\bf K}$ has $AL$, $\lambda$-$AP$, is categorical in $\lambda^+$, then it is stable in $\lambda$. Hence for any $M\in K_\lambda$, there is $N\in K_\lambda$ which is a $(\lambda,\omega)$-limit model over $M$.
\item Let ${\bf K}$ have $\lambda$-$AP$ and $M_1,M_2,M_3\in K_\lambda$. If $M_1\leq M_2$ and $M_3$ is a $(\lambda,\omega)$-limit model over $M_2$, then $M_3$ is also a $(\lambda,\omega)$-limit model over $M_1$.
\item Let $M,N,N'\in K_\lambda$. If $N,N'$ are both $(\lambda,\omega)$-limit models over $M$, then $N\cong_M N'$. 
\end{enumerate}
\end{fact}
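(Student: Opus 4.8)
The plan is to handle clause (1) by invoking the classical stability-from-categoricity theorem and then deriving the limit-model consequence from it, and to prove clauses (2) and (3) directly by short arguments that use only $\lambda$-$AP$ and the AEC axioms.

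For clause (1) I would cite, rather than reprove, the well-known fact that an AEC with $AL$, $\lambda$-$AP$ and categoricity in $\lambda^+$ is $\lambda$-stable (Shelah; see \cite{sh576}). The idea behind it: were ${\bf K}$ not $\lambda$-stable there would be $M\in K_\lambda$ with $|\gs(M)|\geq\lambda^+$; using $\lambda$-$AP$ (together with $JEP$ in $\lambda$, which one obtains from $AL$ and $\lambda^+$-categoricity by a single application of the L\"owenheim--Skolem and coherence axioms) one amalgamates witnesses into a single model in $K_{\lambda^+}$ realizing $\lambda^+$ distinct Galois types over $M$, while a separate chain construction of length $\lambda^+$ produces a model in $K_{\lambda^+}$ omitting unboundedly many of those types; the two models are non-isomorphic, contradicting $\lambda^+$-categoricity. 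For the ``hence'' clause I would then use the standard fact that in a $\lambda$-stable AEC with $\lambda$-$AP$ every $M\in K_\lambda$ has an extension in $K_\lambda$ universal over it (realize the $\leq\lambda$ Galois types over $M$ inside one extension of size $\lambda$ and iterate a back-and-forth; see \cite{sh576,ss}); iterating $\omega$ times yields a $\lk$-increasing chain $M=M_0\lk M_1\lk\cdots$ with $M_{n+1}$ universal over $M_n$, whose union is by definition a $(\lambda,\omega)$-limit over $M$.

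For clause (2) I would give a one-step amalgamation argument. Fix a chain $M_3=\bigcup_{n<\omega}N_n$ witnessing that $M_3$ is a $(\lambda,\omega)$-limit over $M_2$, so $N_0=M_2$ and $N_{n+1}$ is universal over $N_n$. The claim is that $\langle M_1,N_1,N_2,\dots\rangle$ witnesses that $M_3$ is a $(\lambda,\omega)$-limit over $M_1$: it is $\lk$-increasing with union $M_3$ (since $M_1\lk M_2\lk N_1$), it starts at $M_1$, and $N_{n+1}$ is universal over $N_n$ for $n\geq 1$ by hypothesis, so the only thing left to check is that $N_1$ is universal over $M_1$. Given $P\in K_\lambda$ with $M_1\lk P$, apply $\lambda$-$AP$ to $M_2$ and $P$ over $M_1$, then L\"owenheim--Skolem and coherence, to get $Q\in K_\lambda$ with $M_2\lk Q$ and a $\lk$-embedding $P\to Q$ fixing $M_1$ pointwise; by universality of $N_1$ over $M_2$ there is a $\lk$-embedding $Q\to N_1$ fixing $M_2$, and the composite is a $\lk$-embedding $P\to N_1$ fixing $M_1$. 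The only subtlety is keeping the amalgam of size $\lambda$.

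Clause (3) I would prove by back-and-forth. Write $N=\bigcup_{n<\omega}N_n$ and $N'=\bigcup_{n<\omega}N'_n$ for witnessing chains, with $N_0=N'_0=M$, $N_{n+1}$ universal over $N_n$ and $N'_{n+1}$ universal over $N'_n$. I would build an increasing chain of $\lk$-embeddings $f_0\subseteq f_1\subseteq\cdots$ with $f_0=\oop{id}_M$, each $f_k$ an isomorphism onto a $\lk$-submodel, maintaining $\oop{dom}(f_{2n})\supseteq N_n$ and $\ran(f_{2n+1})\supseteq N'_n$; at an even step one extends the current embedding to an isomorphism with domain enlarged to $N_n$ (using the AEC isomorphism axioms) and places its range inside $N'_n$ via universality of $N'_n$ over $N'_{n-1}$, and the odd steps are symmetric using universality of $N_n$ over $N_{n-1}$; then $f\defeq\bigcup_{k<\omega}f_k$ is an isomorphism $N\cong N'$ fixing $M$, i.e.\ $N\cong_M N'$ (this is also recorded in \cite{ss}). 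The genuinely hard input in the whole statement is clause (1): the stability half is a substantial classical theorem I would cite rather than reconstruct, and even its ``hence'' part rests on the nontrivial existence of universal extensions in $\lambda$-stable AECs with amalgamation; clauses (2) and (3), by contrast, are routine once the definitions are unwound, and I would write those out in full.
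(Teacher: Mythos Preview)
The paper states this as a \emph{Fact} and gives no proof at all; it is treated as a known result to be cited (stability from $\lambda^+$-categoricity under $AL$ and $\lambda$-$AP$ is classical, and existence/uniqueness of $(\lambda,\omega)$-limit models under $\lambda$-stability and $\lambda$-$AP$ are standard). Your proposal therefore goes beyond what the paper does: you supply the actual arguments rather than a citation.

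That said, your arguments are the standard ones and are correct. For (1) you rightly defer the stability half to the literature and derive the limit-model existence from universality plus iteration. Your proof of (2) is exactly the expected one-line amalgamation trick: replace $M_2$ by $M_1$ at the bottom of the witnessing chain and check that $N_1$ is universal over $M_1$ via a single use of $\lambda$-$AP$ composed with universality of $N_1$ over $M_2$. Your proof of (3) is the standard back-and-forth for uniqueness of limit models of the same cofinality. Nothing is missing; the only thing to flag is that your parenthetical derivation of $\lambda$-$JEP$ in the sketch for (1) is a bit glib (one really uses that every $M\in K_\lambda$ sits inside the unique model of size $\lambda^+$, which in turn uses that ${\bf K}$ has no maximal models in $\lambda$; this follows from $AL$ plus $\lambda$-$AP$), but since you are citing rather than reproving that half, this is harmless.
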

\begin{theorem}\mylabel{charcat2}{Theorem \thetheorem}
If ${\bf K}$ has $AL$, $\lambda$-$AP$ and is cateogrical in $\lambda$ and $\lambda^+$, then it is axiomatizable by an $L_{\lambda^{+},\lambda^{+}}(\omega\cdot\omega)$ sentence $\sigma$. $AL$ can be replaced by stability in $\lambda$.
\end{theorem}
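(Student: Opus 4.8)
The plan is to reuse the sentence $\sigma_{\bf K}$ from the proof of \ref{charmod}, but to shrink its two internal disjunctions to single $L_{\lambda^+,\lambda^+}$ formulas. Categoricity in $\lambda$ immediately collapses $\bigvee_{i<I(\lambda,{\bf K})}\phi_i$ to one formula $\phi$ encoding the unique isomorphism type of $K_\lambda$. For the pair relation I would \emph{not} try to encode arbitrary $\lk$-pairs in $K_\lambda$ (this is exactly what costs $I_2(\lambda,{\bf K})$ many disjuncts); instead, under the hypotheses $(\lambda,\omega)$-limit models exist (\ref{axifact0}(1), or the stability assumption) and are unique over their base (\ref{axifact0}(3)), and together with categoricity in $\lambda$ this makes the pair $(P,Q)$, where $Q$ is a $(\lambda,\omega)$-limit model over $P$, \emph{unique up to isomorphism of pairs}: given two such pairs, match the bases by categoricity, transport one limit resolution across the isomorphism, and apply uniqueness of limit models over a fixed base. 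Hence there is a single formula $\psi_{\lim}(x;y)$ (one of the $\psi_j$ defined just before \ref{charmod}) capturing it. I would take $\sigma$ to be the $\omega\cdot\omega$-game-quantified sentence whose matrix says, in round $\alpha$, that $x_\alpha\subseteq y_\alpha$, that $y_\alpha$ (after re-enumeration) satisfies $\phi$, and that for each $\beta<\alpha$ either $y_\beta\approx y_\alpha$ or $(y_\beta,y_\alpha)$ (after re-enumerations) satisfies $\psi_{\lim}$; this lies in $L_{\lambda^+,\lambda^+}(\omega\cdot\omega)$.

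For the direction $M\vDash\sigma\Rightarrow M\in K$ the argument is that of \ref{charmod} verbatim: playing the game over the finite tuples of $M$ (the index set of \ref{charmod}) yields a directed system of members of $K_\lambda$ with union $M$, and $M\in K$ by \ref{dirfact}. The only point to note is that each matrix clause still produces a $\lk$-relation, since both $y_\beta\approx y_\alpha$ and ``$(y_\beta,y_\alpha)$ is a limit pair'' give $y_\beta\lk y_\alpha$ (the latter via the isomorphism axiom \ref{aecdef}(3), exactly as the $\psi_j$ are used in \ref{charmod}).

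The forward direction $M\in K\Rightarrow M\vDash\sigma$ is the heart of the proof. If $\nr M=\lambda$, then by categoricity $M$ is the unique model of $K_\lambda$ and Player II plays $y_\alpha=M$ in every round, so every pair clause is met by $y_\beta\approx y_\alpha$. If $\nr M\geq\lambda^+$, I would first observe that the (unique, by categoricity in $\lambda^+$) model of $K_{\lambda^+}$ is saturated: stability in $\lambda$ produces a $\lambda^+$-chain of universal extensions whose union is saturated of size $\lambda^+$, and categoricity in $\lambda^+$ identifies it with the model of $K_{\lambda^+}$. Consequently, working inside a size-$\lambda^+$ submodel of $M$ (possible since $\nr M\geq\lambda^+$), any $N\lk M$ with $N\in K_\lambda$ together with any $\lambda$-sized subset of $M$ can be placed into a $(\lambda,\omega)$-limit model $N'\lk M$ over $N$ built by a limit resolution inside that saturated submodel. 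Player II then builds, as a legitimate strategy, a $\lk$-increasing chain $\langle y_\alpha:\alpha<\omega\cdot\omega\rangle$ in $K_\lambda$, all $\lk M$, so that for each $\alpha$ there is $N_\alpha\in K_\lambda$ with $\bigcup_{\gamma<\alpha}y_\gamma\cup\ran(x_\alpha)\subseteq|N_\alpha|\lk M$ and $y_\alpha$ is a $(\lambda,\omega)$-limit model over $N_\alpha$ inside $M$. Then $x_\alpha\subseteq y_\alpha$, and since $y_\beta\lk N_\alpha$ for each $\beta<\alpha$, \ref{axifact0}(2) makes $y_\alpha$ a $(\lambda,\omega)$-limit model over every such $y_\beta$, so $(y_\beta,y_\alpha)$ is the limit pair and the matrix holds.

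I expect the main obstacle to be exactly this forward construction. One has to realize that the naive requirement ``$y_\alpha$ is limit over $y_\beta$'' fails when $\nr M=\lambda$ (limit models are proper extensions, so $M$ is never limit over itself), which is why the disjunct $y_\beta\approx y_\alpha$ must be present; and one has to locate $(\lambda,\omega)$-limit models \emph{inside} $M$ when $\nr M\geq\lambda^+$, which is precisely what saturation of the $\lambda^+$-model provides. The final clause is then cheap: $AL$, $\lambda$-$AP$ and categoricity in $\lambda^+$ enter only via \ref{axifact0}(1) to yield stability in $\lambda$, while the saturation argument itself needs only stability in $\lambda$, $\lambda$-$AP$, and $K_{\lambda^+}\neq\emptyset$ (the latter from categoricity in $\lambda^+$; no maximal models in $\lambda$, needed for the chain of universal extensions to keep growing, follows from $K_{\lambda^+}\neq\emptyset$ together with categoricity in $\lambda$).
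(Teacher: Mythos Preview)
Your proposal is correct and uses the same key ingredients as the paper: collapsing $\bigvee_i\phi_i$ via $\lambda$-categoricity, replacing $\bigvee_j\psi_j$ by the single isomorphism type of the pair $(M,N)$ with $N$ a $(\lambda,\omega)$-limit over $M$ (well-defined by \ref{axifact0} plus categoricity in $\lambda$), and using saturation of the $\lambda^+$-model to locate limit extensions inside $M$ for the forward direction. The one structural difference is in how the $\lk$-relation between the $y_\alpha$'s is encoded. You demand that $(y_\beta,y_\alpha)$ itself realize $\psi_{\lim}$ (or $y_\beta\approx y_\alpha$), which forces Player II to play limit models at every round; the paper instead asks only for $y_\beta\subseteq y_\alpha$ together with a \emph{common} limit extension $w\approx z$ satisfying $\psi(y_\beta,w)\wedge\psi(y_\alpha,z)$, and then reads off $y_\beta\lk y_\alpha$ by coherence. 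This lets Player II play arbitrary $K_\lambda$-submodels exactly as in \ref{charmod} and find the witness $M_3$ afterward, and correspondingly the paper handles the size-$\lambda$ case by an external disjunct $\exists w\,(\phi(w)\wedge\forall x(x\subseteq w))$ rather than your internal $y_\beta\approx y_\alpha$. Both encodings work and the verification is the same computation with \ref{axifact0}(2),(3); your version is arguably the more direct of the two.
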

\begin{proof}
By \ref{axifact0}(1), fix $M,N\in K_\lambda$ such that $N$ is $(\lambda,\omega)$-limit over $M$. Let $\phi(x)$ code the isomorphism type of $M$ and $\psi(x,y)$ code the isomorphism type of $(M,N)$.
From the proof of \ref{charmod}, it suffices to replace $\bigvee_{i<I(\lambda,{\bf K})}\phi_i$ and $\bigvee_{j<I_2(\lambda,{\bf K})}\psi_j$ there by some $\lambda$-junctions. Since ${\bf K}$ is $\lambda$-categorical, we can replace the first disjunction by $\phi$. We also replace the second disjunction by a coherence formula involving $\psi$. Finally we add a disjunction to $\sigma_{\bf K}$ specifying models of size $\lambda$:
\begin{align*}
\sigma\defeq\ &\exists w\big(\phi(w)\wedge\forall x(x\subseteq w)\big)\vee\Big\{\\&
(\forall x_\alpha\exists y_\alpha)_{\alpha<\omega\cdot\omega}\,\bigwedge_{\alpha<\omega\cdot\omega}\Big[(x_\alpha\subseteq y_\alpha)\wedge\exists z_\alpha\Big((y_\alpha\approx z_\alpha)\wedge\phi(z_\alpha)\Big)\wedge\bigwedge_{\beta<\alpha<\omega\cdot\omega}\exists u_{\beta,\alpha}\exists v_{\beta,\alpha}\exists w\exists z\\
&\Big((u_{\beta,\alpha}\subseteq v_{\beta,\alpha})\wedge(y_\beta\approx u_{\beta,\alpha})\wedge(y_\alpha\approx v_{\beta,\alpha})\wedge(w\approx z)\wedge\psi(u_{\beta,\alpha},w)\wedge\psi(v_{\beta,\alpha},z))\Big)\Big]\Big\}
\end{align*}

Suppose $M^*$ is an $L$-structure and $M^*\vDash\sigma$. By coherence, the last line of $\sigma$ implies $y_\beta\leq y_\alpha$. Then either $M^*\in K_\lambda$ or it can build a directed system $\langle M_\alpha\in K_\lambda:\alpha\in I\rangle$ of union $M^*$ as in \ref{charmod}. By \ref{dirfact}(1) $M^*\in K$. 

Suppose $M^*\in K_\lambda$, then it satisfies the first disjunct of $\sigma$. Otherwise $M^*\in K_{>\lambda}$. We need to verify that for any $M_1,M_2\in K_\lambda$, if $M_1\leq M_2\leq M^*$ then there is $M_3\leq M^*$ such that $(M_1,M_3)\cong(M_2,M_3)\cong (M,N)$. By $AL$ and categoricity in $\lambda^+$, $M^*$ is $\lambda^+$-saturated, so we can build $M_3\in K_\lambda$ which is $(\lambda,\omega)$-limit over $M_2$. 
\begin{center}
\begin{tikzcd}
                                                                     M_3 \arrow[rr, "\cong"]             &                                                                                           & N' \arrow[r, "\cong"] & N                                               \\
                                                                     M_2 \arrow[u, "{(\lambda,\omega)}"] &                                                                                           &                       &                                                 \\
                                     & M_1 \arrow[lu] \arrow[luu, "{(\lambda,\omega)}"', dotted, bend right] \arrow[rr, "\cong"] &                       & M \arrow[uu, "{(\lambda,\omega)}"'] \arrow[luu]
\end{tikzcd}
\end{center} 
By \ref{axifact0}(2), $M_3$ is $(\lambda,\omega)$-limit to both $M_1$. Since ${\bf K}$ is categorical in $\lambda$, $M_1\cong M$ and we can extend this isomorphism to $M_3\cong N'$ for some $N'\geq M$. Then $N'$ is a $(\lambda,\omega)$-limit over $M$. By \ref{axifact0}(3), $N'\cong_MN$ so $(M_1,M_3)\cong (M,N)$. Similarly $(M_2,M_3)\cong (M,N)$. Therefore, $M^*\vDash \sigma$ as desired.
\end{proof}
Using a well-known result of Shelah, we can replace the assumption by $AP$ by a set-theoretic one.
\begin{fact}\mylabel{axifact}{Fact \thetheorem}\cite[Theorem 3.5]{sh88}
Assume $2^\lambda<2^{\lambda^+}$. If $I(\lambda,{\bf K})=1$ and $1\leq I(\lambda^+,{\bf K})<2^{\lambda^+}$, then ${\bf K}$ has $\lambda$-$AP$. 
\end{fact}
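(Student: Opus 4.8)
The plan is to prove the contrapositive and re-run Shelah's weak-diamond construction: assuming ${\bf K}$ is categorical in $\lambda$ but fails the $\lambda$-amalgamation property, I will produce $2^{\lambda^{+}}$ pairwise non-isomorphic members of $K_{\lambda^{+}}$, which contradicts the hypothesis $1\le I(\lambda^{+},{\bf K})<2^{\lambda^{+}}$. The first step is to extract a uniform ``bad pair.'' By $\lambda$-categoricity there is a unique $M_{0}\in K_{\lambda}$, and failure of $\lambda$-AP, transported along $\lambda$-categoricity, gives $N^{0},N^{1}\in K_{\lambda}$, proper extensions of $M_{0}$, with no amalgam over $M_{0}$; by the L\"{o}wenheim--Skolem axiom it is harmless to search for such an amalgam inside $K_{\lambda}$ (apply it to the union of the two images in any hypothetical amalgam and use coherence), so this failure is genuine. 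Using $\lambda$-categoricity again together with the isomorphism axioms, transporting $(M_{0},N^{0},N^{1})$ along any isomorphism $M_{0}\cong P$ yields a bad pair over an arbitrary $P\in K_{\lambda}$.

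Next I would build a $\lk$-increasing, continuous binary tree $\langle M_{\eta}:\eta\in{}^{<\lambda^{+}}2\rangle\subseteq K_{\lambda}$ with $M_{\langle\rangle}=M_{0}$, so that at every node $(M_{\eta},M_{\eta^{\frown}0},M_{\eta^{\frown}1})$ is a bad pair. Categoricity keeps every node in $K_{\lambda}$ (unions over limit levels below $\lambda^{+}$ still have size $\lambda$), so the recursion never stalls; for each branch $\eta\in{}^{\lambda^{+}}2$ the chain axioms give $M_{\eta}\defeq\bigcup_{\alpha<\lambda^{+}}M_{\eta\restriction\alpha}\in K$, and strictness of the successor steps forces $\nr{M_{\eta}}=\lambda^{+}$, so $M_{\eta}\in K_{\lambda^{+}}$. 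It remains to separate the $2^{\lambda^{+}}$ branches into $2^{\lambda^{+}}$ isomorphism classes.

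The local device is this: given an isomorphism $g\colon M_{\eta\restriction\delta}\cong M_{\nu\restriction\delta}$ between two tree nodes that have already split, one can choose the next two extensions so that $g$ does not extend to an isomorphism $M_{\eta\restriction(\delta+1)}\cong M_{\nu\restriction(\delta+1)}$: take a bad pair $A_{0},A_{1}$ over $M_{\eta\restriction\delta}$, put $M_{\eta\restriction(\delta+1)}\defeq A_{0}$, push $A_{1}$ forward along $g$ to $A_{1}'\ge M_{\nu\restriction\delta}$ and put $M_{\nu\restriction(\delta+1)}\defeq A_{1}'$; any isomorphism $A_{0}\cong A_{1}'$ extending $g$, composed with the inverse of the pushforward, realises $A_{1}$ as an amalgam of $(A_{0},A_{1})$ over $M_{\eta\restriction\delta}$, contradicting badness (the remaining successors are then filled in by transporting the bad pair, which keeps the tree a bad-pair tree). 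Conversely, an isomorphism $h\colon M_{\eta}\cong M_{\nu}$ of branch models restricts to such a $g$ for all $\delta$ in a club. So I would invoke the weak-diamond principle $\Phi_{\lambda^{+}}$ --- which holds since $2^{\lambda}<2^{\lambda^{+}}$ (Devlin--Shelah) --- to guess, on a stationary set of levels $\delta$, the relevant isomorphism-and-split data, and perform the defeat above at those levels. Then no $h$ can survive, since its restriction to a guessed level past the split point (inside the club) would already have been killed, so the $2^{\lambda^{+}}$ branch models are pairwise non-isomorphic.

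The hard part will be exactly the weak-diamond bookkeeping: encoding ``an isomorphism between two $\lambda$-sized nodes together with a branch split'' as an input that $\Phi_{\lambda^{+}}$ (which natively predicts one bit, or an object of size $\le 2^{\lambda}$) can catch; arranging that for any two distinct branches the split point lands in the stationary guessed set intersected with the relevant club; and checking that performing at most one defeat per node is compatible with retaining enough non-amalgamability through the rest of the tree, with stationarity absorbing the levels spent defeating other pairs. By contrast the tree construction itself, the use of $\lambda$-categoricity to keep all nodes isomorphic, and the chain-axiom checks putting branch unions in $K_{\lambda^{+}}$ are all routine.
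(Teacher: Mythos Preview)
The paper does not give its own proof of this statement: it is recorded as a \emph{Fact} with a citation to \cite[Theorem 3.5]{sh88} and is used as a black box to derive \ref{charcat2cor} from \ref{charcat2}. So there is nothing in the paper to compare your argument against beyond the reference itself.

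That said, your outline is precisely the classical Shelah argument from \cite{sh88}: extract a non-amalgamable pair from failure of $\lambda$-AP, propagate it via $\lambda$-categoricity to every node of a binary tree of height $\lambda^{+}$, and use Devlin--Shelah weak diamond (available from $2^{\lambda}<2^{\lambda^{+}}$) to defeat potential isomorphisms between branch models. Your identification of the delicate step is accurate: the content lies entirely in the weak-diamond bookkeeping, specifically in coding the data ``partial isomorphism between two nodes at level $\delta$'' as something the guessing principle can catch, and in verifying that the club of levels where a putative isomorphism $h:M_{\eta}\cong M_{\nu}$ restricts nicely meets the stationary set of correctly guessed levels above the split point. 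One small caution: the cleanest formulations do not literally conclude that \emph{all} $2^{\lambda^{+}}$ branch models are pairwise non-isomorphic from a single application of $\Phi_{\lambda^{+}}$; rather, the coloring determined by the defeat procedure is shown to be non-constant on any branch modulo a club, and weak diamond then yields $2^{\lambda^{+}}$ isomorphism types. Either way the conclusion $I(\lambda^{+},{\bf K})=2^{\lambda^{+}}$ follows, which is what you need.
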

\begin{corollary}\mylabel{charcat2cor}{Corollary \thetheorem}
Assume $2^\lambda<2^{\lambda^+}$. If ${\bf K}$ has $AL$ and is cateogrical in $\lambda$ and $\lambda^+$, then it is axiomatizable by an $L_{\lambda^{+},\lambda^{+}}(\omega\cdot\omega)$ sentence $\sigma'$. $AL$ can be replaced by stability in $\lambda$.
\end{corollary}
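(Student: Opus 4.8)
The plan is to reduce immediately to Theorem \ref{charcat2} by supplying the missing hypothesis, namely $\lambda$-$AP$. First I would observe that the set-theoretic assumption $2^\lambda<2^{\lambda^+}$ together with $AL$ and categoricity in both $\lambda$ and $\lambda^+$ puts us exactly in the situation of Fact \ref{axifact}: categoricity in $\lambda$ gives $I(\lambda,{\bf K})=1$, and categoricity in $\lambda^+$ gives $I(\lambda^+,{\bf K})=1$, which certainly satisfies $1\le I(\lambda^+,{\bf K})<2^{\lambda^+}$ since $2^{\lambda^+}\ge 2 > 1$. Hence Fact \ref{axifact} yields that ${\bf K}$ has the $\lambda$-amalgamation property.

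With $\lambda$-$AP$ in hand, ${\bf K}$ now satisfies all the hypotheses of Theorem \ref{charcat2}: it has $AL$, $\lambda$-$AP$, and is categorical in $\lambda$ and $\lambda^+$. Applying Theorem \ref{charcat2} directly produces the desired $L_{\lambda^+,\lambda^+}(\omega\cdot\omega)$ sentence $\sigma'$ (one may simply take $\sigma'$ to be the sentence $\sigma$ constructed there). For the final clause, when $AL$ is replaced by stability in $\lambda$: stability in $\lambda$ is already part of the conclusion of Fact \ref{axifact0}(1) under the original hypotheses, and more to the point, Theorem \ref{charcat2} explicitly allows $AL$ to be replaced by stability in $\lambda$ in its statement. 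So the same reduction goes through: Fact \ref{axifact} still applies (it needs only categoricity in $\lambda$ and $\lambda^+$ plus $2^\lambda<2^{\lambda^+}$, not $AL$), giving $\lambda$-$AP$, and then the "stability in $\lambda$" version of Theorem \ref{charcat2} finishes the job.

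I do not anticipate a serious obstacle here, since this is a routine corollary that trades a structural hypothesis ($\lambda$-$AP$) for a set-theoretic one via a citable theorem of Shelah. The only point requiring a moment's care is checking that the numerical hypotheses of Fact \ref{axifact} are met — specifically that $\lambda$-categoricity forces $I(\lambda,{\bf K})=1$ exactly, and that the upper bound $I(\lambda^+,{\bf K})<2^{\lambda^+}$ is automatic from $\lambda^+$-categoricity (indeed $I(\lambda^+,{\bf K})=1<2^{\lambda^+}$). Once that bookkeeping is done, the two stated versions (with $AL$, and with stability in $\lambda$) follow by direct appeal to Theorem \ref{charcat2}.
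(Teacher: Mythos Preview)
Your proposal is correct and follows exactly the paper's approach: the paper's proof simply reads ``Combine \ref{axifact} and \ref{charcat2},'' which is precisely the reduction you describe. Your additional bookkeeping (verifying the numerical hypotheses of Fact \ref{axifact} and handling the stability-in-$\lambda$ variant) is accurate and merely spells out what the paper leaves implicit.
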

\begin{proof}
Combine \ref{axifact} and \ref{charcat2}.
\end{proof}
In other words, under WGCH \ref{axiques} has a positive answer when we assume both (1) and (2) of the hypotheses there and use game quantification.
\begin{obs}
\begin{enumerate}
\item As in \ref{charuni}, we can replace categoricity in $\lambda$ by $I(\lambda,{\bf K})\leq\lambda$. We keep the original format of the theorem statements to better answer \ref{axiques}. 
\mylabel{charobs}{Observation \thetheorem}
\item Let $\kappa\geq\lambda^+$. In \ref{charcat2}, if we replace categoricity in $\lambda^+$ by $\kappa$, and further assume $(<\kappa)$-$AP$, then models in $K_\kappa$ are saturated \cite[Corollary 4.11(3)]{s17}. The same argument allows us to axiomatize $K_{\geq\kappa}$ by an $L_{\lambda^+,\lambda^+}(\omega\cdot\omega)$ sentence. If $\kappa$ is regular, then we can replace $AL$ by stability in $[\lambda,\kappa)$.
\item John Baldwin pointed out that \cite{sh394} can reduce the successive categoricity assumption to a single categoricity. Indeed, assuming $AP$ and categoricity in a successor above $H_2$ (the second Hanf number), we have categoricity in $H_2$ and $AL$. Thus we can axiomatize $K_{\geq H_2}$ by a sentence in $L_{H_2^+,H_2^+}(\omega\cdot\omega)$.
\item Marcos Mazari-Armida observed that \cite{sh576} provides another variation on \ref{charcat2cor}: if we assume categoricity in $\lambda$, $\lambda^+$, $\lambda^{++}$ as well as $2^\lambda<2^{\lambda^+}<2^{\lambda^{++}}$, then we have stability in $\lambda$ and $\lambda$-$AP$. Hence we can axiomatize $K$ by a sentence in $L_{\lambda^+,\lambda^+}(\omega\cdot\omega)$.
\end{enumerate}
\end{obs}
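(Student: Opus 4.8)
The plan is to treat the four items as applications that re-enter the proof of Theorem~\ref{charcat2}, isolating the exact points where categoricity in $\lambda$ and in $\lambda^+$ were invoked and then supplying those ingredients from weaker hypotheses or from cited transfer theorems. Recall that in Theorem~\ref{charcat2} categoricity in $\lambda$ was used only to collapse the disjunction $\bigvee_{i<I(\lambda,{\bf K})}\phi_i$ of Main Theorem~\ref{charmod} to a single formula $\phi$ (and, correspondingly, to a single limit pair $\psi$), while categoricity in $\lambda^+$ entered only at the verification step, where it guaranteed that an arbitrary $M^*\in K_{>\lambda}$ is $\lambda^+$-saturated, so that a limit model $M_3$ over each $M_2\leq M^*$ in $K_\lambda$ exists inside $M^*$.

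For item~(1) I would simply refrain from collapsing the disjunctions. Exactly as in Proposition~\ref{charuni}, keeping $\bigvee_{i<I(\lambda,{\bf K})}\phi_i$ together with a disjunction over the $\leq\lambda$ many limit pairs $(M_i,N_i)$ (one $N_i$ a $(\lambda,\omega)$-limit over each representative $M_i$, which exists by stability in $\lambda$ via Fact~\ref{axifact0}(1)) produces junctions of size $I(\lambda,{\bf K})\leq\lambda$, hence still in $L_{\lambda^+,\lambda^+}$, and the rest of the proof goes through verbatim; this is the sense in which it answers Question~\ref{axiques}(1). For item~(2) the only change is the source of $\lambda^+$-saturation: with $\kappa\geq\lambda^+$ in place of $\lambda^+$ and the added $(<\kappa)$-$AP$, \cite[Corollary 4.11(3)]{s17} gives that every model in $K_\kappa$ is saturated, hence $\lambda^+$-saturated. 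I would therefore read the same sentence $\sigma$ as an axiomatization of $K_{\geq\kappa}$, meaning it is correct on all $L$-structures of size $\geq\kappa$: the forward direction uses saturation to build the limit models $M_3$, and the converse is the unchanged directed-system construction of Main Theorem~\ref{charmod} with Fact~\ref{dirfact}(1). When $\kappa$ is regular, stability in $[\lambda,\kappa)$ supplies limit models at every level below $\kappa$ and, together with regularity, the saturation of size-$\kappa$ models, so it may replace $AL$.

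Items~(3) and~(4) are then obtained by feeding the right hypotheses into the frameworks of item~(2) and Theorem~\ref{charcat2}, respectively. For~(3), assuming $AP$ and categoricity in a successor above $H_2$, Shelah's downward transfer \cite{sh394} yields categoricity at both $H_2$ and $H_2^+$ together with $AL$ (no maximal models above the Hanf number); running the item-(2) argument with $H_2$ in the role of $\lambda$ axiomatizes $K_{\geq H_2}$ in $L_{H_2^+,H_2^+}(\omega\cdot\omega)$. For~(4), under categoricity in $\lambda,\lambda^+,\lambda^{++}$ and $2^\lambda<2^{\lambda^+}<2^{\lambda^{++}}$, \cite{sh576} delivers stability in $\lambda$ and $\lambda$-$AP$, which are precisely the hypotheses of Theorem~\ref{charcat2} (with stability in $\lambda$ replacing $AL$), giving the $L_{\lambda^+,\lambda^+}(\omega\cdot\omega)$ axiomatization as a variant of Corollary~\ref{charcat2cor}.

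The main obstacle I anticipate is the bookkeeping in item~(2): making precise that ``axiomatize $K_{\geq\kappa}$'' refers to correctness on all structures of size $\geq\kappa$, and verifying that the hypotheses of \cite[Corollary 4.11(3)]{s17} line up exactly with what the limit-model step of Theorem~\ref{charcat2} consumes (so that $\lambda^+$-saturation, rather than the full saturation the cited result provides, is all that the argument needs). A secondary point to monitor throughout is that every disjunction introduced remains a $\lambda$-junction (respectively an $H_2$- or $\lambda$-junction in the later items), so the ambient logic stays $L_{\lambda^+,\lambda^+}(\omega\cdot\omega)$ and no blow-up to $L_{\chi^+,\lambda^+}$ occurs.
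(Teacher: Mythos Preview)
Your proposal is correct and matches the paper's intended reasoning; the observation is stated without a separate proof in the paper precisely because each item follows by re-entering the proof of Theorem~\ref{charcat2} at the two pressure points you identified (the size of the $\phi_i$/$\psi$ disjunctions, and the source of $\lambda^+$-saturation) and substituting the indicated hypotheses or transfer results. One minor wording point: in item~(3) you are not literally applying item~(2) with a new $\kappa$, but rather re-running Theorem~\ref{charcat2} with $H_2$ as the base cardinal in place of $\lambda=\ls$, which is why the logic lands in $L_{H_2^+,H_2^+}(\omega\cdot\omega)$ rather than $L_{\lambda^+,\lambda^+}(\omega\cdot\omega)$.
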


We now encode the ${\bf K}$-substructure relation. First we handle the case when the smaller model has size $\lambda$. In \cite[Section 6]{snote}, the language is expanded by adding a new predicate for the substructure relation. In \cite[Theorem 3.2.3]{bbhanf}, $I_2(\lambda,{\bf K})$ many new predicates are added. Here we explicitly define the predicates in $L_{(\lambda+I_2(\lambda,{\bf K}))^+,\lambda^+}(\omega\cdot\omega)$ without expanding the language. 

\begin{proposition}\mylabel{charprop}{Proposition \thetheorem}
There is an $L_{(\lambda+I_2(\lambda,{\bf K}))^+,\lambda^+}(\omega\cdot\omega)$ formula $\sigma_\leq(x)$ that encodes the ${\bf K}$-substructure relation: for any $M\in K$, $a\subseteq|M|$ of size $\lambda$, $M\vDash\sigma_\leq [a]$ iff $a\leq M$ (the enumeration of $a$ does not matter).
\end{proposition}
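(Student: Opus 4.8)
The plan is to adapt the sentence $\sigma_{\bf K}$ from \ref{charmod} into a formula with $\lambda$ free variables $x=\langle x_k:k<\lambda\rangle$, forcing the set $\ran(x)$ to be closed under the same iterated L\"owenheim--Skolem/coherence construction while simultaneously being a ${\bf K}$-substructure of the ambient model. Concretely, I would set $\sigma_\leq(x)$ to assert first that $x$ enumerates (the universe of) a structure isomorphic to some $M_i$ via $\bigvee_{i<I(\lambda,{\bf K})}\phi_i(x)$, i.e. that $\ran(x)$ carries a model in $K_\lambda$; and second, inside a game quantifier $(\forall x_\alpha\exists y_\alpha)_{\alpha<\omega\cdot\omega}$, that Player~II can play so that each $y_\alpha$ is a ${\bf K}$-structure containing $x_\alpha$, that consecutive $y_\beta\leq y_\alpha$, \emph{and} additionally that $x\leq y_\alpha$, the latter recorded by a clause $\exists u_\alpha\exists v_\alpha\big((u_\alpha\approx x)\wedge(v_\alpha\approx y_\alpha)\wedge\bigvee_{j<I_2(\lambda,{\bf K})}\psi_j(u_\alpha,v_\alpha)\big)$. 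The point is that the $\psi_j$ clause pins down both the isomorphism types of $x$ and $y_\alpha$ and the inclusion $\ran(x)\subseteq\ran(y_\alpha)$ \emph{as a ${\bf K}$-pair}.

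The verification splits as usual. For the forward direction, assume $M\in K$ and $a\leq M$ with $\nr{a}=\lambda$; then $a$ carries a model in $K_\lambda$, so $\bigvee_i\phi_i(a)$ holds, and Player~II wins the game exactly as in \ref{charmod}: at round $\alpha$ pick $y_\alpha\leq M$ of size $\lambda$ containing $\ran(x_\alpha)\cup\ran(a)\cup\bigcup_{\beta<\alpha}\ran(y_\beta)$, which is legal by L\"owenheim--Skolem; coherence gives $y_\beta\leq y_\alpha$ and, since $a\leq M$ and $a\leq y_\alpha$ are both $L$-substructures with $a,y_\alpha\leq M$, coherence again gives $a\leq y_\alpha$, so the $\psi_j$ clause is witnessed. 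For the converse, suppose $M\vDash\sigma_\leq[a]$. Running the directed-system construction of \ref{charmod} produces $\langle M_b\in K_\lambda:b\in I\rangle$ directed with union $M$, hence $M\in K$; I would arrange (as in that proof, by seeding stage $0$ with $a$ together with each singleton, or by closing every $M_b$ under a copy of $\ran(a)$) that $a\leq M_b$ for every $b$, using the $x\leq y_\alpha$ clauses to guarantee this at each step. Then $a=\bigcup_b a$ with $a\leq M_b$ for all $b$, and \ref{dirfact}(3) yields $a\leq M$.

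For the cardinality bookkeeping, the formula has $\lambda$ free variables, $<\lambda^+$ quantifiers (the game quantifier contributes $\omega\cdot\omega$ blocks, each of length $\lambda$, plus the $\lambda$ free variables), and the widest conjunctions/disjunctions are the $\bigvee_{i<I(\lambda,{\bf K})}\phi_i$ and $\bigvee_{j<I_2(\lambda,{\bf K})}\psi_j$, together with the $\bigwedge_{\beta<\alpha<\omega\cdot\omega}$; since each $\phi_i,\psi_j$ is itself a $<\lambda^+$-conjunction, the total junction width is $\leq\lambda+I_2(\lambda,{\bf K})$, so $\sigma_\leq$ lands in $L_{(\lambda+I_2(\lambda,{\bf K}))^+,\lambda^+}(\omega\cdot\omega)$. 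That the enumeration of $a$ is irrelevant follows because $\sigma_\leq(x)$ is built only from atomic formulas in the $x_k$ (inside the $\phi_i,\psi_j$) and the semantic condition ``$\ran(x)\leq M$'' does not depend on the order.

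The main obstacle I expect is the converse direction: ensuring that the \emph{fixed} parameter tuple $a$ genuinely ends up as a ${\bf K}$-substructure of \emph{every} $M_b$ in the directed system, not merely of the pieces produced by games in which $a$ was played as some $x_\alpha$. In \ref{charmod} each $M_b$ is generated by a game of a single singleton $s$, and different games are amalgamated by continuing those games; here one must additionally thread $a\leq(\cdot)$ through every such amalgamation step. The clean fix is to build, for the singleton $s$, the union along rounds in which $x_\alpha$ ranges over $\ran(a)\cup\{s\}$ and to keep invoking the $x\leq y_\alpha$ clause; then at each coherence/amalgamation step the relevant structures all lie over a common $y_\alpha\geq a$, so coherence delivers $a\leq M_b$. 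Checking that this invariant survives the stage-$n$ decomposition argument of \ref{charmod} verbatim is the one place needing care, but it is bookkeeping rather than a new idea.
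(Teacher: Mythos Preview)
Your approach is essentially the paper's: augment $\sigma_{\bf K}$ with a clause forcing $x\leq y_\alpha$ via the $\psi_j$ disjunction, then rerun the directed-system construction of \ref{charmod} maintaining the invariant $a\leq M_b$. The paper is slightly leaner---it only requires $x\leq y_0$ (since $y_0\leq y_\alpha$ already follows from the $\beta=0$ instance of the existing $\psi_j$ clauses, transitivity gives $x\leq y_\alpha$ for free) and drops your separate $\bigvee_i\phi_i(x)$ clause, which is redundant once $\exists u_\alpha(u_\alpha\approx x)\wedge\psi_j(u_\alpha,v_\alpha)$ is present.

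One genuine slip to fix: writing $\bigvee_i\phi_i(x)$ directly \emph{does} depend on the enumeration of $x$, because each $\phi_i$ encodes a fixed ordering of $|M_i|$; your stated justification (``built only from atomic formulas in the $x_k$'') does not rescue this, since an atomic formula like $x_3<x_7$ is obviously order-sensitive. The repair is either to wrap it as $\exists z\big((z\approx x)\wedge\bigvee_i\phi_i(z)\big)$, or---as the paper does---to omit the clause entirely and let the $\psi_j$ clause (which already carries the $u_\alpha\approx x$ re-enumeration) do the work.
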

\begin{proof}
Our definition $\sigma_\leq(x)$ will be similar to that of $\sigma_{\bf K}$:
\begin{align*}
\sigma_{\leq}(x)\defeq(\forall x_\alpha\exists y_\alpha)&_{\alpha<\omega\cdot\omega}\ \Big(\exists u\exists v(x\approx u)\wedge(y_0\approx v)\wedge\bigvee_{j<I_2(\lambda,{\bf K})}\psi_j(u,v)\Big)\wedge\\&\bigwedge_{\alpha<\omega\cdot\omega}\Big[(x_\alpha\subseteq y_\alpha)\wedge\exists z_\alpha\Big((y_\alpha\approx z_\alpha)\wedge\bigvee_{i<I(\lambda,{\bf K})}\phi_i(z_\alpha)\Big)\wedge\\
&\;\bigwedge_{\beta<\alpha<\omega\cdot\omega}\exists u_{\beta,\alpha}\exists v_{\beta,\alpha}\Big((y_\beta\approx u_{\beta,\alpha})\wedge(y_\alpha\approx v_{\beta,\alpha})\wedge\bigvee_{j<I_2(\lambda,{\bf K})}\psi_j(u_{\beta,\alpha},v_{\beta,\alpha})\Big)\Big]
\end{align*}

The enumeration of $a$ does not matter by our definition of $\approx$. If $a\in K_\lambda$ and $a\leq M$, then $M\vDash \sigma_\leq[a]$ by L\"{o}wenheim-Skolem and coherence axioms. In particular, given $x_0$ in $\sigma_\leq[a]$, we choose $y_0\leq M$ that contains both $x_0$ and $a$. Then coherence guarantees that $a\leq y_0$. Conversely suppose $a\subseteq|M|$ of size $\lambda$ and $M\vDash\sigma_\leq [a]$. As in \ref{charmod}, we can build a directed system $\langle M_\alpha\in K_\lambda:\alpha\in I\rangle$ of union $M$ with the additional requirement that for any $\alpha\in I$, $M_\alpha\geq a$. By \ref{dirfact}(1)(2), $M\in K$ and $M_\alpha\leq M$ for all $\alpha\in I$. By transitivity of $\leq\ $, $a\leq M$ as desired. 
\end{proof}
\begin{remark}
Using the terminology in \cite[Theorem 2.1]{sv}, we only used the first two levels of the canonical tree because a $K$-substructure relation only concerns two levels. The price to pay is game quantification.
\end{remark}
Since our infinitary language only allows $\lambda$ many free variables, it cannot directly encode substructures of size greater than $\lambda$. We propose two solutions: the first solution is a substructure relation whose underlying language is the singleton $\{\sigma_\leq\}$. The second solution involves relativizing $\sigma_ \leq$ to a new predicate. 

\begin{proposition}\mylabel{charsub}{Proposition \thetheorem}
Let $M,N\in K$ and $\sigma_\leq$ be defined as in \ref{charprop}.
\begin{enumerate}
\item  $M\leq N$ iff $M\subseteq_{\{\sigma_\leq\}}N$ (if $a\subseteq |M|$ is of size $\lambda$, then $M\vDash \sigma_\leq[a] $ iff $N\vDash\sigma_\leq[a]$).
\item Let $R$ be a new predicate where $N^R=|M|$ closed under permutations.  $M\leq N$ iff $(N,R)\vDash\forall b\ \big(\sigma_\leq^R(b)\rightarrow\sigma_\leq(b)\big)$ where $\sigma_\leq^R$ is the relativized version of $\sigma_\leq$ inside $R$. Namely, 
\begin{align*}
\sigma^R_{\leq}(x)\defeq\ &(\forall x_\alpha\exists y_\alpha)_{\alpha<\omega\cdot\omega}\ \Big(\bigwedge_{\alpha<\omega\cdot\omega}R(x_\alpha)\Big)\rightarrow\Big\{\Big(\bigwedge_{\alpha<\omega\cdot\omega}R(y_\alpha)\Big)\wedge\\&\Big(\exists u\exists v(x\approx u)\wedge(y_0\approx v)\wedge\bigvee_{j<I_2(\lambda,{\bf K})}\psi_j(u,v)\Big)\wedge\\&\bigwedge_{\alpha<\omega\cdot\omega}\Big[(x_\alpha\subseteq y_\alpha)\wedge\exists z_\alpha\Big((y_\alpha\approx z_\alpha)\wedge\bigvee_{i<I(\lambda,{\bf K})}\phi_i(z_\alpha)\Big)\wedge\\
&\;\bigwedge_{\beta<\alpha<\omega\cdot\omega}\exists u_{\beta,\alpha}\exists v_{\beta,\alpha}\Big((y_\beta\approx u_{\beta,\alpha})\wedge(y_\alpha\approx v_{\beta,\alpha})\wedge\bigvee_{j<I_2(\lambda,{\bf K})}\psi_j(u_{\beta,\alpha},v_{\beta,\alpha})\Big)\Big]\Big\}
\end{align*}
\end{enumerate}
\end{proposition}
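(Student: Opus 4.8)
The two parts are both reductions to Proposition \ref{charprop}, and the plan is to exploit that $\sigma_\leq$ was designed precisely so that $M\vDash\sigma_\leq[a]$ detects ``$a\leq M$'' for $a$ of size $\lambda$. For part (1), the key observation is that, by coherence and the Löwenheim–Skolem axiom, $M\leq N$ is equivalent to the statement that $M$ and $N$ have the same $\lambda$-sized ${\bf K}$-substructures that lie inside $|M|$. First I would prove the forward direction: assume $M\leq N$; take any $a\subseteq|M|$ of size $\lambda$. If $M\vDash\sigma_\leq[a]$, then $a\leq M$ by \ref{charprop}, and then $a\leq N$ by transitivity of $\leq$, so $N\vDash\sigma_\leq[a]$ again by \ref{charprop}; conversely if $N\vDash\sigma_\leq[a]$ then $a\leq N$, and since $a\subseteq|M|\subseteq|N|$ with $M\leq N$, coherence gives $a\leq M$, so $M\vDash\sigma_\leq[a]$. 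For the reverse direction, assume $M\subseteq_{\{\sigma_\leq\}}N$; in particular $M\subseteq N$ as $L$-structures (being $\{\sigma_\leq\}$-substructure includes the underlying set inclusion), and both are in $K$ by \ref{charprop} (apply $\sigma_\leq$ to any $\lambda$-sized subset they realize). Now decompose $M$ as a directed union $\langle M_\alpha\in K_\lambda:\alpha\in I\rangle$ using \ref{dirfact}. For each $\alpha$, $M_\alpha\leq M$ means $M\vDash\sigma_\leq[M_\alpha]$, hence $N\vDash\sigma_\leq[M_\alpha]$ by hypothesis, hence $M_\alpha\leq N$ by \ref{charprop}. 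Since this holds for every $\alpha\in I$ and $M=\bigcup_{\alpha\in I}M_\alpha$, \ref{dirfact}(3) gives $M\leq N$.

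For part (2), the structure is the same, but instead of comparing two structures we internalize $|M|$ as the interpretation $N^R$ of a new unary predicate $R$ inside $N$ (closed under permutations, so enumerations are irrelevant, matching the $\approx$ convention). The relativized formula $\sigma_\leq^R(x)$ is literally $\sigma_\leq(x)$ with every quantifier bounded to $R$ and every witnessing structure ($y_\alpha$, and implicitly $z_\alpha$, $u_{\beta,\alpha}$, $v_{\beta,\alpha}$) required to sit inside $R$; thus $(N,R)\vDash\sigma_\leq^R[b]$ for $b$ of size $\lambda$ with $\ran(b)\subseteq N^R=|M|$ holds exactly when the witnessing directed-system construction of \ref{charprop} can be run entirely within $|M|$, i.e.\ iff $b\leq M$. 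So I would argue: if $M\leq N$, take any $b$ with $\sigma_\leq^R(b)$; then $b\leq M$, and by forward-transitivity (as in part (1)) $b\leq N$, so $\sigma_\leq(b)$ holds; hence $(N,R)\vDash\forall b\,(\sigma_\leq^R(b)\rightarrow\sigma_\leq(b))$. Conversely, if this universal sentence holds, decompose $M$ as a directed union of $M_\alpha\in K_\lambda$; each $M_\alpha$ is a $\lambda$-sized subset of $|M|=N^R$ with $M_\alpha\leq M$, so $(N,R)\vDash\sigma_\leq^R[M_\alpha]$, hence $N\vDash\sigma_\leq[M_\alpha]$, hence $M_\alpha\leq N$ by \ref{charprop}; then \ref{dirfact}(3) yields $M\leq N$.

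The main obstacle, such as it is, is bookkeeping rather than mathematics: one must check that the relativization $\sigma_\leq^R$ genuinely captures ``the \ref{charprop}-construction runs inside $R$'' — in particular that bounding \emph{all} the existential witnesses ($z_\alpha$ and the $u,v$'s, not just the $y_\alpha$'s) to $R$ is both necessary (so that a true $\sigma_\leq^R[b]$ forces a ${\bf K}$-substructure decomposition of $M$ itself, not of some larger model) and sufficient (so that when $b\leq M$, the witnesses produced by Löwenheim–Skolem applied inside $M$ indeed land in $|M|=N^R$). This is exactly parallel to the verification already carried out in \ref{charmod} and \ref{charprop}, so once one notes that $M$ is an AEC in its own right and that all the Löwenheim–Skolem/coherence/chain manipulations there take place among $\lambda$-sized submodels of $M$, the relativized version follows verbatim. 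A minor point to state explicitly is that $\{\sigma_\leq\}$-substructure in part (1) is taken to include the set-theoretic inclusion $|M|\subseteq|N|$, which is what lets coherence apply; I would remark on this parenthetically as the proposition statement already does.
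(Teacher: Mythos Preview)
Your proposal is correct and follows essentially the same route as the paper's proof: both directions of (1) and (2) are handled via \ref{charprop} together with coherence/transitivity for the forward implications, and via writing $M$ as a directed union of $K_\lambda$-substructures and invoking \ref{dirfact}(3) for the converses. One small quibble: the decomposition of $M$ into a directed system of $\lambda$-sized ${\bf K}$-substructures is obtained from L\"owenheim--Skolem (or, if you like, \ref{dirfact2}), not from \ref{dirfact} itself, which only tells you about unions of such systems; this is a citation slip rather than a mathematical gap.
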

\begin{proof}
\begin{enumerate}
\item If $M\leq N$ and let $a\subseteq M$. Using \ref{charprop}, if $M\vDash \sigma_\leq[a]$, then $a\leq M\leq N$ showing $N\vDash\sigma_\leq[a]$. If $N\vDash\sigma_\leq[a]$, then $a\leq N$. By coherence, $a\leq M$ and so $M\vDash\sigma_\leq[a]$. Conversely, build a directed system $\langle M_\alpha\in K_\lambda:\alpha\in I\rangle$ inside $M$ such that for all $\alpha\in I$, $M_\alpha\leq M$. Then $M\vDash\sigma_\leq[M_\alpha]$. Since $M\subseteq_{\{\sigma_\leq\}}N$, we have $N\vDash\sigma_\leq[M_\alpha]$ and $M_\alpha\leq N$. The result follows from \ref{dirfact}(3).
\item If $M\leq N$ and $N\vDash \sigma_\leq^R[b]$ for some $b\subseteq|N|$, we need to show that $N\vDash \sigma_\leq[b]$. By assumption we can build a directed system of union $M$ and have $b\leq M$. By transitivity of $\leq\ $, $b\leq N$ and the conclusion follows. Conversely, by \ref{dirfact}(3), it suffices to build a directed system $\langle M_\alpha\in K_\lambda:\alpha\in I \rangle$ of union $M$ such that for all $\alpha\in I$, $M_\alpha\leq N$. Since $(N,R)\vDash\forall b\ \big(\sigma_\leq^R(b)\rightarrow\sigma_\leq(b)\big)$, we can require $M_\alpha\leq M$ instead of $M_\alpha\leq N$. Such construction is possible by L\"{o}wenheim-Skolem and coherence axioms.  
\end{enumerate}
\end{proof}
Using game quantification, we derive a simple proof to \cite[Theorem 7.2]{kue2}, which uses back-and-forth arguments. \cite[Theorem 6.21]{snote} proved similarly by transfering the AEC to its substructure expansion and translating results between $\equiv_{\infty,\lambda^+}$ and $\lambda^+$ back-and-forth systems.
\begin{corollary}\mylabel{vasapp}{Corollary \thetheorem}
Let $M,N$ be $L$-structures. If either $M$ or $N$ is in $K$ and $M\preceq_{L_{\infty,\lambda^+}(\omega\cdot\omega)}N$, then $M\leq N$ (and both are in $K$). 
\end{corollary}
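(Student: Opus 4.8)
The plan is to route everything through the two objects already built, namely the sentence $\sigma_{\bf K}$ of \ref{charmod} and the formula $\sigma_\leq(x)$ of \ref{charprop}, using only that both belong to $L_{\chi^+,\lambda^+}(\omega\cdot\omega)$ and hence to $L_{\infty,\lambda^+}(\omega\cdot\omega)$ (recall $\chi\defeq\lambda+I_2(\lambda,{\bf K})\leq 2^\lambda$).

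First I would dispose of the parenthetical ``both are in $K$''. Since $\sigma_{\bf K}$ is an $L_{\infty,\lambda^+}(\omega\cdot\omega)$ sentence and $M\preceq_{L_{\infty,\lambda^+}(\omega\cdot\omega)}N$, in particular $M\equiv_{L_{\infty,\lambda^+}(\omega\cdot\omega)}N$, so $M\vDash\sigma_{\bf K}$ iff $N\vDash\sigma_{\bf K}$. By hypothesis one of $M,N$ lies in $K$ and therefore satisfies $\sigma_{\bf K}$ by \ref{charmod}; hence both satisfy it, and by \ref{charmod} again both $M$ and $N$ are in $K$. Now I would apply \ref{charsub}(1): since $M,N\in K$, the relation $M\leq N$ is equivalent to $M\subseteq_{\{\sigma_\leq\}}N$, i.e. to the assertion that for every $a\subseteq|M|$ of size $\lambda$ one has $M\vDash\sigma_\leq[a]$ iff $N\vDash\sigma_\leq[a]$. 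But $\sigma_\leq(x)$ is an $L_{\infty,\lambda^+}(\omega\cdot\omega)$ formula in a single tuple $x$ of length $\lambda<\lambda^+$, and $a$ is a tuple of parameters from $M$ of matching length, so this biconditional is simply an instance of $M\preceq_{L_{\infty,\lambda^+}(\omega\cdot\omega)}N$. Hence $M\leq N$, as required.

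There is essentially no hard computational step here: all the combinatorial content has been absorbed into \ref{charmod} and \ref{charprop}/\ref{charsub}. The only point meriting a moment of care is bookkeeping about the logic, and this is where I expect the ``obstacle'' (such as it is) to sit: one must make sure that elementary substructure for the game-quantifier logic $L_{\infty,\lambda^+}(\omega\cdot\omega)$ does preserve satisfaction of the $(\omega\cdot\omega)$-game-quantified sentence $\sigma_{\bf K}$ and of $\sigma_\leq$, which it does by the very definition of $\preceq$ for a logic (the $\delta$-game quantifier with $\delta=\omega\cdot\omega$ is an allowed formula-forming operation of $L_{\infty,\lambda^+}(\omega\cdot\omega)$ by \ref{gamedef}), together with the observation that a $\lambda$-tuple of free variables is legal in $L_{\infty,\lambda^+}$ since $\lambda<\lambda^+$. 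This is precisely what allows us to bypass the explicit $\lambda^+$-back-and-forth construction used in \cite[Theorem 7.2]{kue2}.
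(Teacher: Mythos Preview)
Your proof is correct and follows essentially the same route as the paper: reduce to $L_{\chi^+,\lambda^+}(\omega\cdot\omega)\subseteq L_{\infty,\lambda^+}(\omega\cdot\omega)$, transfer $\sigma_{\bf K}$ by elementarity to get both models in $K$ via \ref{charmod}, then observe that the hypothesis yields $M\subseteq_{\{\sigma_\leq\}}N$ and conclude $M\leq N$ by \ref{charsub}(1). Your added remarks about why $\sigma_{\bf K}$ and $\sigma_\leq$ are legitimate formulas of the logic (and hence preserved by $\preceq$) are accurate and make explicit what the paper leaves implicit.
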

\begin{proof}
Since $M\preceq_{L_{\infty,\lambda^+}(\omega\cdot\omega)}N$, $M\subseteq_{L_{(\lambda+I_2(\lambda,{\bf K}))^+,\lambda^+}(\omega\cdot\omega)}N$. In particular $M\vDash \sigma_{\bf K}$ iff $N\vDash\sigma_{\bf K}$. By \ref{charmod}, either $M,N$ is in $K$ implies both are in $K$. On the other hand, the assumption implies $M\subseteq_{\{\sigma_\leq\}}N$. By \ref{charsub}(1), $M\leq N$. 
\end{proof}

One can ask the same \ref{axiques} for ${\bf K}$-substructure relation instead of models of $K$. The following are variations on \ref{charprop}: 
\begin{corollary}\mylabel{charpropcor}{Corollary \thetheorem}
There is a formula $\sigma_\leq(x)$ in $L_{\lambda^{+},\lambda^{+}}(\omega\cdot\omega)$ that encodes the ${\bf K}$-substructure relation (for any $M\in K$, $a\subseteq|M|$ of size $\lambda$, $M\vDash\sigma_\leq [a]$ iff $a\leq M$), assuming one of the following:
\begin{enumerate}
\item ${\bf K}$ is a universal class, in which case $\sigma_\leq(x)$ is in $L_{\lambda^+,\lambda^+}$ (categoricity is not needed).
\item ${\bf K}$ has $AL$, $\lambda$-$AP$ and is cateogrical in $\lambda,\lambda^+$ and we restrict the use of $\sigma_\leq$ to models of size $\geq\lambda^+$. $AL$ can be replaced by stability in $\lambda$.
\item $2^\lambda<2^{\lambda^+}$, ${\bf K}$ has $AL$ and is cateogrical in $\lambda,\lambda^+$ and we restrict the use of $\sigma_\leq$ to models of size $\geq\lambda^+$. $AL$ can be replaced by stability in $\lambda$.
\end{enumerate}
\end{corollary}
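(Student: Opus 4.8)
The plan, in each case, is to retain the shape of the substructure-encoding formula $\sigma_\leq$ of \ref{charprop} but to shrink its infinitary junctions, using the same devices that turned $\sigma_{\bf K}$ of \ref{charmod} into the sharper sentences of \ref{charuni}, \ref{charcat2} and \ref{charcat2cor}; for a universal class even more can be said. For (1): since a universal class has $\leq$ equal to $L$-substructure and is closed under substructures, for $M\in K$ and $a\subseteq|M|$ of size $\lambda$ we have $a\leq M$ iff $\ran(a)$ is closed under the function symbols of $L$ and contains the interpretations of the constants of $L$ — that is, iff $\ran(a)$ is the universe of an $L$-substructure of $M$, which is then automatically a member of $K$. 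As $|\llk|\leq\lambda$ and $x$ has length $\lambda$, this condition is expressed by an $L_{\lambda^+,\lambda^+}$ (indeed $L_{\lambda^+,\omega}$) formula $\sigma_\leq(x)$ whose length does not depend on $I(\lambda,{\bf K})$, involving no game quantifier and no use of categoricity; the enumeration of $a$ is immaterial since $x$ enters only through $\ran(x)$.

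For (2) I would start from the data of \ref{charcat2}: by \ref{axifact0}(1) fix $M\leq N$ in $K_\lambda$ with $N$ a $(\lambda,\omega)$-limit over $M$, and let $\phi(x)$ and $\psi(x,y)$ code the isomorphism types of $M$ and $(M,N)$. Define $\sigma_\leq(x)$ from the formula of \ref{charprop} by (i) replacing each $\bigvee_{i<I(\lambda,{\bf K})}\phi_i$ with $\phi$, using $\lambda$-categoricity, and (ii) replacing each disjunction $\bigvee_{j<I_2(\lambda,{\bf K})}\psi_j(s,t)$ with the coherence clause $\exists w\exists z\,\big((s\subseteq t)\wedge(w\approx z)\wedge\psi(s,w)\wedge\psi(t,z)\big)$ from the proof of \ref{charcat2}; this clause forces $\ran(s)\subseteq\ran(t)$ to sit together, as $\leq$-submodels, inside a single $(\lambda,\omega)$-limit model $\ran(w)=\ran(z)$, whence $\ran(s)\leq\ran(t)$ by coherence. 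In particular the clause of $\sigma_\leq$ relating the free variable $x$ to $y_0$ now forces $\ran(x)\leq\ran(y_0)$ and not merely $\ran(x)\subseteq\ran(y_0)$. The resulting $\sigma_\leq(x)$ is in $L_{\lambda^+,\lambda^+}(\omega\cdot\omega)$ and is again enumeration-independent.

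To verify $\sigma_\leq$ against $\leq$ on models $M^*\in K_{\geq\lambda^+}$ — which are $\lambda^+$-saturated, as in \ref{charcat2} — suppose first $a\leq M^*$. Then Player II wins by choosing at each round $y_\alpha\leq M^*$ containing $x_\alpha\cup\ran(a)\cup\bigcup_{\beta<\alpha}y_\beta$, verifying $\phi$ by categoricity, and verifying every coherence clause by building inside $M^*$ a common $(\lambda,\omega)$-limit over the relevant pair of $\lambda$-submodels — possible by saturation — then identifying it with $(M,N)$ via \ref{axifact0}(2)(3) and $\lambda$-categoricity; the clause relating $x$ to $y_0$ holds because $\ran(a)\leq y_0$ by coherence and $y_0$ has a limit extension in $M^*$. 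Conversely, if $M^*\vDash\sigma_\leq[a]$, a winning strategy yields — exactly as in the converse directions of \ref{charmod} and \ref{charprop} — a directed system $\langle M_c\in K_\lambda:c\in I\rangle$ with union $M^*$; clauses (i) and (ii) make each $M_c$ a member of $K_\lambda$ and the system directed, while the clause relating $x$ to $y_0$ in every game forces $\ran(a)\leq y_0\leq M_c$, so $\ran(a)\leq M_c\leq M^*$ by \ref{dirfact}(2) and transitivity of $\leq$. Finally, for (3) one only notes that categoricity in $\lambda$ and $\lambda^+$ gives $I(\lambda,{\bf K})=1$ and $1\leq I(\lambda^+,{\bf K})<2^{\lambda^+}$, so \ref{axifact} supplies $\lambda$-$AP$ from $2^\lambda<2^{\lambda^+}$, and the argument of (2) applies verbatim — the same passage that turns \ref{charcat2} into \ref{charcat2cor}.

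I expect the main obstacle to be step (ii): ensuring the coherence clause really pins down $\leq$ rather than merely $\subseteq$, and that the restriction to $M^*\in K_{\geq\lambda^+}$ is precisely what the forward direction requires — one needs enough saturation inside $M^*$ to realize a $(\lambda,\omega)$-limit over each $y_0$ (and over each pair $y_\beta\leq y_\alpha$), and this genuinely fails for $M^*\in K_\lambda$. Everything else — the re-enumeration bookkeeping through $\approx$, and weaving the $(\omega\cdot\omega)$-round games into a directed system as in \ref{charmod} — is routine given the proofs of \ref{charprop} and \ref{charcat2}.
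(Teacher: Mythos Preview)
Your proposal is correct and follows essentially the same approach as the paper: for (1) closure under the function symbols, for (2) replacing the $I_2$-disjunctions by the single coherence clause built from the pair $(M,N)$ with $N$ a $(\lambda,\omega)$-limit over $M$, and for (3) invoking \ref{axifact} to recover $\lambda$-$AP$. Your write-up is in fact more detailed than the paper's on the forward/backward verification for (2); the paper simply records the encoding $a\subseteq b\wedge\exists z_0\exists z_1\exists a'\exists b'\big((a\approx a')\wedge(b\approx b')\wedge(z_0\approx z_1)\wedge\psi(a',z_0)\wedge\psi(b',z_1)\big)$ and defers the rest to \ref{charcat2} and \ref{charprop}.
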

\begin{proof}
\begin{enumerate}
\item $\sigma_\leq(x)$ requires that $x$ is closed under functions.
\item Combine the proofs of \ref{charcat2} and \ref{charprop}: from \ref{charprop}, it suffices to encode $a\leq b$ where both have length $\lambda$ (then we can replace $\bigvee_{j<I_2(\lambda,{\bf K})}\psi_j(a,b)$). Fix $M,N\in K_\lambda$ such that $N$ is $(\lambda,\omega)$-limit over $M$. Let $\phi(x)$ code the isomorphism type of $M$ and $\psi(x,y)$ code the isomorphism type of $(M,N)$. From the proof of \ref{charcat2}, $\psi$ is the only isomorphism type of pairs inside a $\lambda^+$-saturated model. Thus $a\leq b$ can be encoded as
$$a\subseteq b\wedge\exists z_0\ \exists z_1\ \exists a'\ \exists b'\big((a\approx a')\wedge(b\approx b')\wedge(z_0\approx z_1)\wedge\psi(a',z_0)\wedge\psi(b',z_1)\big).$$
\item Combine (2) and \ref{axifact}.
\end{enumerate}
\end{proof}
With the exact same proof as in \ref{charsub}, we can show:
\begin{corollary}\mylabel{charsubcor}{Corollary \thetheorem}
Let $M,N\in K$ and $\sigma_\leq$ be defined as in \ref{charpropcor}.
\begin{enumerate}
\item  $M\leq N$ iff $M\subseteq_{\{\sigma_\leq\}}N$ (if $a\subseteq |M|$ is of size $\lambda$, then $M\vDash \sigma_\leq[a] $ iff $N\vDash\sigma_\leq[a]$).
\item Let $R$ be a new predicate where $N^R=|M|$ closed under permutations.  $M\leq N$ iff $(N,R)\vDash\forall b\ \big(\sigma_\leq^R(b)\rightarrow\sigma_\leq(b)\big)$ where $\sigma_\leq^R$ is the relativized version of $\sigma_\leq$ inside $R$.
\end{enumerate}
\end{corollary}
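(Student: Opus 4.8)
The plan is to replay the proof of \ref{charsub} essentially verbatim, the only observation required being that that proof is a black box in $\sigma_\leq$: it never uses the particular syntactic form of the formula produced in \ref{charprop}, only the defining equivalence ``$M\vDash\sigma_\leq[a]$ iff $a\leq M$'' for $a\subseteq|M|$ of size $\lambda$, together with the coherence axiom, the transitivity of $\leq$, and Fact~\ref{dirfact}. Each of the three formulas $\sigma_\leq(x)$ supplied by \ref{charpropcor} satisfies exactly this equivalence (in cases (2) and (3), for ambient models of size $\geq\lambda^+$), so the argument transfers with no new idea.

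In detail, for part (1): if $M\leq N$ and $a\subseteq|M|$ has size $\lambda$, then $M\vDash\sigma_\leq[a]$ yields $a\leq M\leq N$, hence $N\vDash\sigma_\leq[a]$; conversely $N\vDash\sigma_\leq[a]$ yields $a\leq N$, hence $a\leq M$ by coherence, hence $M\vDash\sigma_\leq[a]$. For the other direction, assuming $M\subseteq_{\{\sigma_\leq\}}N$, use L\"{o}wenheim-Skolem and coherence to build a directed system $\langle M_\alpha\in K_\lambda:\alpha\in I\rangle$ inside $M$ with $\bigcup_{\alpha}M_\alpha=M$ and $M_\alpha\leq M$ for every $\alpha$; then $M\vDash\sigma_\leq[M_\alpha]$, so $N\vDash\sigma_\leq[M_\alpha]$, so $M_\alpha\leq N$, and \ref{dirfact}(3) gives $M\leq N$. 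Part (2) is the same with $\sigma_\leq$ relativized to $R$: from $M\leq N$ and $(N,R)\vDash\sigma_\leq^R[b]$ one builds the directed system inside $N^R=|M|$ to obtain $b\leq M$, whence $b\leq N$ by transitivity; conversely the implication $\sigma_\leq^R(b)\rightarrow\sigma_\leq(b)$ lets the required directed system inside $M$ be made of models $M_\alpha\leq M$ in place of $M_\alpha\leq N$, and one again concludes via \ref{dirfact}(3).

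The only step meriting care --- and the one spot where ``the exact same proof'' is a slight abuse --- is the size bookkeeping. In the universal-class instance of \ref{charpropcor}(1) the members of the directed system must be taken closed under the functions of $L$, exactly as in \ref{charuni}, so that each $M_\alpha$ lands in $K_\lambda$ and genuinely witnesses $\sigma_\leq$ (and here no game quantifier appears at all). In instances (2) and (3) the equivalence underlying $\sigma_\leq$ holds only for ambient models of size $\geq\lambda^+$, since a model of size $\lambda$ has no room for the $(\lambda,\omega)$-limit extension coded by $\psi$; accordingly \ref{charsubcor} in those two cases is to be read for $M,N$ of size $\geq\lambda^+$, and under that restriction every invocation of $\sigma_\leq$ above takes place inside the large models $M$ and $N$, so the displayed argument goes through unchanged. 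We expect nothing beyond this routine size discipline to be at issue.
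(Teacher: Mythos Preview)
Your proposal is correct and takes essentially the same approach as the paper, which simply states that the result follows ``with the exact same proof as in \ref{charsub}'' without further elaboration. Your additional remarks on the size bookkeeping for cases (2) and (3) of \ref{charpropcor} are accurate and make explicit a restriction the paper leaves implicit.
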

\section{A variation on Shelah's presentation theorem}

We will give a variation to Shelah's presentation theorem via our encoding of AECs. The presentation theorem statement is adapted from \cite[Theorem 4.15]{bal} (see also \cite[Theorem 3.4]{Gclass}).
\begin{theorem}\mylabel{presvar}{Theorem \thetheorem}
Let ${\bf K}$ be an AEC in $L$ and with L\"{o}wenheim-Skolem number $\ls$. Define $\chi\defeq\ls+I_2(\ls,{\bf K})$. There exists an expansion $L'\supseteq L$ of size $\chi$, an $L'$-theory $T$ and a set of $L'$-types $\Gamma$ of size $\chi$ such that 
\begin{enumerate}
\item $K=PC(T,\Gamma,L)$.
\item If $M',N'\in EC(T,\Gamma)$ and $M'\subseteq_{L'} N'$, then $M'\restriction L\lk N'\restriction L$.
\item If $M\lk N$, there are $L'$-expansions of $M,N$ to $M',N'$ such that $M'\subseteq_{L'}N'$. 
\end{enumerate}
\end{theorem}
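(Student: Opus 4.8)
The plan is to convert the infinitary axiomatization $\sigma_{\bf K}$ from \ref{charmod} into a first-order $PC$ presentation by Skolemizing the game quantifier and the existential quantifiers, and by replacing the $\lambda$-indexed conjunctions $\bigvee_{i<I(\lambda,{\bf K})}\phi_i$ and $\bigvee_{j<I_2(\lambda,{\bf K})}\psi_j$ with new predicates picking out, for each finite tuple, which isomorphism type it belongs to. Concretely, I would introduce into $L'$: for each $i<I(\lambda,{\bf K})$ a $\lambda$-ary predicate $P_i$, for each $j<I_2(\lambda,{\bf K})$ a $2\lambda$-ary predicate $Q_j$, together with a supply of $(\lambda+|{\bf K}|)$-many function symbols of finite arity whose purpose is to produce, from any finite tuple $\bar a$, an enumeration of length $\lambda$ of a ${\bf K}$-substructure $M_{\bar a}$ containing $\bar a$, and further functions witnessing coherence (given $\bar a\subseteq\bar b$, producing a $\lambda$-enumeration exhibiting $M_{\bar a}\lk M_{\bar b}$). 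Since each $\phi_i$ and $\psi_j$ is a $(\leq\lambda^+)$-conjunction of atomic and negated atomic $L$-formulas, I would not need to add predicates for them as formulas; rather the theory $T$ will be first-order and the $\Gamma$-types will encode the requirement that $P_i(\bar x)$ forces $\bar x\models\phi_i$ (and its failure when needed), so that omitting $\Gamma$ restores exactly the infinitary content.

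The steps, in order, would be: (1) fix the enumerations $\langle M_i\rangle$, $\langle (M_j,N_j)\rangle$ and formulas $\phi_i,\psi_j$ as in the Observation before \ref{charmod}; (2) define $L'$ by adding the predicates $P_i,Q_j$ and the Skolem functions described above, so $|L'|=\chi$; (3) write $T$ as a first-order $L'$-theory asserting the closure conditions — every point lies in the range of the canonical enumeration functions, those functions are "coherent" with each other (composition/inclusion diagrams commute up to the relations $P_i,Q_j$), and the $Q_j$-relation refines $P_i$ in both coordinates — all of which are first-order because they only quantify over finitely many elements at a time; (4) define $\Gamma$ to be the set of types saying "$\bar x$ is in the declared isomorphism type $P_i$ but violates some conjunct $\theta$ of $\phi_i$", and similarly for $Q_j$ and $\psi_j$, plus types forcing that the declared enumeration of $M_{\bar a}$ really has order type $\lambda$ with no repetitions beyond those prescribed — this is where $|\Gamma|=\chi$ comes from, since there are $\chi$-many $(i,\theta)$ and $(j,\theta)$ pairs; (5) prove $K=PC(T,\Gamma,L)$: if $M\in K$, expand it using \ref{charmod} (choose the enumerations/limit-model witnesses the game-strategy provides, reading off the $P_i,Q_j$ from actual isomorphism types) to get $M'\models T$ omitting $\Gamma$; conversely if $M'\models T$ omits $\Gamma$, the Skolem functions hand us a directed system $\langle M_{\bar a}:\bar a\text{ finite}\rangle$ of genuine members of $K_\lambda$ (genuine because $\Gamma$ is omitted) with $M_{\bar a}\lk M_{\bar b}$ for $\bar a\subseteq\bar b$ (genuine because the $Q_j$-relation omitting $\Gamma$ forces the $\psi_j$-content), so by \ref{dirfact}(1) the reduct lies in $K$; (6) for parts (2) and (3), use the same functions together with \ref{charprop}/\ref{charsub}: if $M'\subseteq_{L'}N'$ then the $M_{\bar a}$ computed inside $M'$ equal those in $N'$ and each is $\lk N'\restriction L$ (via the coherence witnessed inside $N'$), so by \ref{dirfact}(3) $M'\restriction L\lk N'\restriction L$; and given $M\lk N$ in $K$, \ref{dirfact2} supplies compatible directed systems which tell us how to choose the function values on $M$ as restrictions of those on $N$, yielding $M'\subseteq_{L'}N'$.

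The main obstacle, and the step I would spend the most care on, is item (4)–(5): making the finitely-many-quantifier first-order theory $T$ together with the omitted types $\Gamma$ actually capture the statement "$M_{\bar a}$ is a ${\bf K}$-substructure of size exactly $\lambda$ and $M_{\bar a}\lk M_{\bar b}$," since $T$ alone cannot see that the enumeration is injective of length $\lambda$ nor that an infinitary conjunction $\phi_i$ holds — all of that must be pushed into $\Gamma$ as omitted types, and I must check the count of such types stays $\chi$ and that omitting them is equivalent to the infinitary assertion. A secondary subtlety is that $\sigma_{\bf K}$ uses an $(\omega\cdot\omega)$-game quantifier; Skolemizing it introduces functions that take a countable position-history as input, which at first glance is not finitary. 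The fix is the standard one: because the game has length $\omega\cdot\omega$ and the winning strategy built in \ref{charmod} is in fact a bounded-memory strategy (each $M_{\bar a}$ uses only boundedly many rounds of a single singleton's game), one only ever needs finitely many previous moves to compute the next, so finitely-many function symbols of finite arity suffice — but verifying that the strategy from \ref{charmod} genuinely has this finite-memory form is the technical heart of the argument and is what I would write out in full.
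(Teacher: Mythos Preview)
Your overall architecture---finitary function symbols producing, from each finite tuple $\bar a$, a $\lambda$-enumeration of a model $M_{\bar a}\in K_\lambda$, further functions witnessing $M_{\bar a}\lk M_{\bar b}$ when $\ran(\bar a)\subseteq\ran(\bar b)$, and items (2)--(3) via \ref{dirfact}(3) and \ref{dirfact2}---matches the paper's, and is correct. Two points, however, diverge from the paper and one is a genuine gap.

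First, your $\lambda$-ary predicates $P_i$ and $2\lambda$-ary predicates $Q_j$ are not permitted: the target is a finitary $PC$ class, so $L'$ must be finitary. The paper's route is both simpler and avoids this: do not name the isomorphism types by new predicates at all. Once the terms $f_k^n(\bar a)$ are substituted for the free variables of $\phi_i$, the resulting formula has only the $n$ free variables of $\bar a$ and is merely a $\lambda$-conjunction of atomic and negated-atomic $L$-formulas in those $n$ variables. Hence $\bigvee_{i}\phi_i(\{f_k^n(\bar a):k<\lambda\})$, and likewise the $\psi_j$ clauses, already lie in $L'_{\chi^+,\omega}$. The paper then invokes Chang's presentation theorem as a black box to convert these $L'_{\chi^+,\omega}$ sentences into a first-order theory together with $\chi$-many omitted types. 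This replaces your hand-built $\Gamma$ in one stroke and dissolves the bookkeeping you flag in steps (4)--(5); in particular the worry about ``enumeration of order type exactly $\lambda$'' evaporates, since satisfying some $\phi_i$ on the term sequence already pins down the isomorphism type.

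Second, your concern about Skolemizing the $(\omega\cdot\omega)$-game is a red herring. The paper does not Skolemize $\sigma_{\bf K}$. The backward direction of \ref{charmod} already showed that a winning strategy yields a directed system indexed by \emph{finite} tuples; the presentation theorem simply records that system via $\{f_k^n:k<\lambda,\ n<\omega\}$ (plus auxiliary $g_k^{m,l},h_k^{m,l}$ for re-enumerations witnessing the pair types $\psi_j$), without ever mentioning game positions or histories. No finite-memory analysis of the strategy is needed.
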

\begin{remark}
When $I_2(\ls,{\bf K})<2^{\ls}$, our result is stronger than Shelah's presentation theorem as his encoding totally ignores the model theoretic complexity of ${\bf K}$ and is using $2^{\ls}$ many types in $\Gamma$.
\end{remark}
\begin{proof}
We will adapt $\sigma_{\bf K}$ in \ref{charmod} to the first-order language. For a tuple of variables $a$ and $n<\omega$, we write $a^n$ to emphasize $a$ has length $n$. For $k<n$, we write $a^n(k)$ the $k$-th coordinate of $a^n$.

As in the original presentation theorem, expand $L$ to $L'$ which includes $\{f_k^n:k<\ls,n<\omega\}$ where for each $n<\omega$, $k<\ls$, $f_k^n$ is an $n$-ary function. For $n<\omega$, we will require that $\{f_k^n:k<\ls\}$ maps an $n$-tuple to a $K$-structure of size $\ls$ containing that tuple. This will be achieved by 
$$\sigma^n\defeq\forall a^n\ \bigwedge_{l<n}\bigvee_{k<\ls}\big(f^n_k(a^n)=a^n(l)\big)\wedge\bigvee_{i<I(\lambda,{\bf K})}\phi_i\big(\{f^n_k(a^n):k<\ls\}\big)$$
In the above definition, although $\phi_i$'s have $\ls$ many free variables, it is just an $\ls$-conjunction of (negation of) atomic formulas with $n$ free variables (from $a^n$). So each $\phi_i$ is inside $L'_{{\ls^+},\omega}$. 

Also, we want to require that the $K$-structures generated are directed with respect to the tuple input. However, $\{f_k^n:k<\ls,n<\omega\}$ might not be compatible with the enumerations of \emph{pairs} of models, say $M_j\leq N_j$. Hence we expand $L'$ further to include $\{g_k^{m,l},h_k^{m,l}:k<\ls,m+l<\omega\}$ where for $m+l<\omega$, $k<\ls$, $g_k^{m,l}$ and $h_k^{m,l}$ are $(m+l)$-ary functions and correctly enumerate a pair of models. The following will take care of the re-enumerations of $\{f_k^n:k<\ls\}$ for each $n<\omega$.
\begin{align*}
\sigma^{m,n,l}\defeq\ &\forall b^m\ \forall c^n\ \forall d^l\Big[\ran(b^m)\cup\ran(c^n)\subseteq\ran(d^l)\rightarrow\\
&\Big(\{f^{m}_k(b^m):k<\ls\}\approx\{g^{m,l}_k(b^m;d^l):k<\ls\}\wedge\\
&\;\;\{f^{l}_k(d^l):k<\ls\}\approx\{h^{m,l}_k(b^m;d^l):k<\ls\}\wedge\\
&\;\;\{f^{n}_k(c^n):k<\ls\}\approx\{g^{n,l}_k(c^n;d^l):k<\ls\}\wedge\\
&\;\;\{f^{l}_k(d^l):k<\ls\}\approx\{h^{n,l}_k(c^n;d^l):k<\ls\}\wedge\\
&\;\;\bigvee_{i<I_2(\ls,{\bf K})}\psi_i\big(\{g^{m,l}_k(b^m;d^l):k<\ls\},\{h^{m,l}_k(b^m;d^l):k<\ls\}\big)\wedge\\ &\;\bigvee_{j<I_2(\ls,{\bf K})}\psi_j\big(\{g^{n,l}_k(c^n;d^l):k<\ls\},\{h^{n,l}_k(c^n;d^l):k<\ls\}\big)\Big)\Big]
\end{align*}
Similar to the case of $\sigma^n$, the formulas $\phi_i,\psi_j$ and the connective $\approx$ are simply $\ls$-junctions of (negation of) atomic formulas, which are inside $L'_{{\ls^+},\omega}$. 

To convert $\{\sigma^n:n<\omega\}\cup\{\sigma^{m,n,l}:m,n,l<\omega\}$ into first-order sentences, we use Chang's presentation theorem (see \cite[Chapter 1 Theorem 8.16]{Gbook}) which adds $\chi$-many new predicates to $L'$ to represent the $\chi$-conjunctions and disjunctions, and $\chi$-many $L'$-types to omit. This gives our final $T$, $\Gamma$ and $L'$. 

It remains to check the three items in the theorem statement. \begin{enumerate}
\item $K\subseteq PC(T,\Gamma,L)$ by L\"{o}wenheim-Skolem and coherence axioms. Let $M\in PC(T,\Gamma,L)$ and $M'\in EC(T,\Gamma)$ such that $M'\restriction L=M$. Then $M'\restriction L$ is the union of a directed system of $K$-structures of size $\ls$. By \ref{dirfact}(1) $M\in K$. Hence $PC(T,\Gamma,L)\subseteq K$.
\item By \ref{dirfact}(3). 
\item By \ref{dirfact2}.
\end{enumerate}
\end{proof}
Another question raised by Grossberg is the following:
\begin{ques}
Is it possible to lower the bound of $|\Gamma|$ below $\ls+I_2(\ls,{\bf K})$ in general? What if we also assume tameness or stability?
\end{ques}
\begin{remark}
\begin{enumerate}
\item In the above proof, we did not use $\sigma_{\leq}$ because $\{f_k^n:k<\ls, n<\omega\}$ already plays its role. We could have done the same in \ref{charsub} but the approach via $\sigma_{\leq}$ is cleaner and does not add new function symbols.
\item One might want to encode $\bigvee_{i<I_2(\lambda,{\bf K})}\psi_i$ etc by omitting types without raising $|T|$ above $\ls$. However, it amounts to list all pairs of isomorphism types that are not any of the $\psi_i$'s. This will raise $|\Gamma|$ to $2^{\ls}$ which is equivalent to the original presentation theorem.
\item In \cite[Theorem 3.2.3]{bbhanf}, new predicates are essentially added for our $\phi_i$ and $\psi_j$. Also, the requirement in \cite[Definition 3.2.1(4)]{bbhanf} encodes our $\{\sigma^{m,n,l}:m,n,l<\omega\}$ to build a directed system, but our version is purely syntactic and more direct. We used Chang's representation theorem to bring down the infinitary logic to first-order with omitting types. In \cite{bbhanf}, their theory $T^*$ is still in the infinitary logic and thus does not need to omit types. 
\end{enumerate}
\end{remark}

We derive the known results in the following corollary. Item (1) appeared for the first time in \cite{Gbook} while (2) and (3) were undoubtedly known to Chang \cite{chang}.

\begin{corollary}\mylabel{presvarcor}{Corollary \thetheorem}
Let ${\bf K}$ be an AEC and $\chi\defeq\ls+I_2(\ls,{\bf K})$.
\begin{enumerate}
\item $K$ is a $PC_{\chi}(=PC_{\chi,1})$ class. 
\item The Hanf number of ${\bf K}$ is bounded above by $\beth_{\delta(\chi,1)}$.
\item If $\chi=\al$ or $\chi$ is a strong limit with $\cf(\chi)=\al$, then the Hanf number of ${\bf K}$ is bounded above by $\beth_{\chi^+}$.
\end{enumerate}
\end{corollary}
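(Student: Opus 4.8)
The plan is to read the entire statement off the presentation theorem \ref{presvar} together with classical Hanf-number calculations for $PC$ classes, handling the three items in turn.

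For item (1): \ref{presvar} already exhibits $K$ as $PC(T,\Gamma,L)$ with $|L'|$, $|T|$, $|\Gamma|$ all $\le\chi$, so $K$ is a $PC_{\chi,\chi}=PC_\chi$ class; the only thing to add is the standard reduction $PC_{\chi,\chi}=PC_{\chi,1}$ — the step the paper attributes to \cite{Gbook}, and which is essentially already in Chang \cite{chang}. Concretely, I would enumerate $\Gamma=\langle p_i:i<\chi\rangle$, expand the language by $\le\chi$ fresh symbols naming the index $i$ and a ``witnessing'' relation, add $\le\chi$ first-order axioms tying the new relation to the $p_i$, and omit the single resulting type; since the theory stays of size $\le\chi$ this yields $K\in PC_{\chi,1}$. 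I would reproduce or simply cite that argument.

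For items (2) and (3): with $K$ presented as a $PC_{\chi,1}$ class I apply the classical upper bound for Hanf numbers of such classes (Morley's method; see \cite{Gbook} for the ordinal $\delta(\chi,1)$ in this generality): a $PC_{\chi,1}$ class with a model of cardinality $\ge\beth_{\delta(\chi,1)}$ has arbitrarily large models, so the Hanf number of ${\bf K}$ is $\le\beth_{\delta(\chi,1)}$, which is (2). Item (3) is then the evaluation of this bound in two classical regimes: for $\chi=\al$, a countable first-order theory omitting a single type (equivalently, an $L_{\omega_1,\omega}$-sentence) has Hanf number $\beth_{\omega_1}=\beth_{\al^+}$ by Morley's theorem; and for $\chi$ a strong limit with $\cf(\chi)=\al$, the same Ehrenfeucht--Mostowski/indiscernibles argument truncated at level $\chi^+$ gives Hanf number $\le\beth_{\chi^+}$, which is classical (Chang \cite{chang}).

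The main obstacle is essentially notational rather than mathematical — this is a routine consequence of \ref{presvar}. The two points needing care are: (i) checking that the ``$\chi$ types to one type'' reduction in (1) keeps the theory of size $\le\chi$, so that the $\chi$ feeding the Hanf bound is the intended one; and (ii) matching the definition of $\delta(\chi,1)$ with the convention of the cited source, so that the specializations in (3) really are the Morley/Chang values $\beth_{\chi^+}$.
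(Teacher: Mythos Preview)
Your proposal is correct and follows essentially the same route as the paper: derive (1) from \ref{presvar} plus the standard many-types-to-one-type coding, and then read off (2) and (3) from the classical Hanf-number bounds for $PC_{\chi,1}$ classes together with the evaluation $\delta(\chi,1)=\chi^+$ in the two special regimes. The only difference is bibliographic: the paper cites Shelah \cite[VII Lemma 5.1(4), Theorems 5.3 and 5.5(5)]{sh90} for each step, whereas you point to \cite{Gbook}, Morley, and Chang for the same facts.
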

\begin{proof}
\begin{enumerate}
\item Combine \ref{presvar} and Shelah's 1-type coding \cite[VII Lemma 5.1(4)]{sh90}.
\item Combine (1) and \cite[VII Theorem 5.3]{sh90}.
\item Combine (2) and the fact that $\delta(\chi,1)=\chi^+$ \cite[VII Theorem 5.5(5)]{sh90}.
\end{enumerate}
\end{proof}
We finish this section with one more application.

\begin{definition}
Let ${\bf K}$ be an AEC. ${K^<}\defeq\{\langle |M|,|N|\rangle:N<M\}$ is a class of structures whose language consists of a single unary predicate. 
\end{definition}

In 1994, motivated by \cite[Theorem 3.8]{sh88}, Grossberg suggested the following conjecture (see Problem (5) in \cite[Introduction]{sh576}):
\begin{conj}
Let ${\bf K}$ be an AEC, $\lambda\geq\ls$. If $I(\lambda,{\bf K})=I(\lambda^+,{\bf K})=1$, then $K_{\lambda^{++}}\neq\emptyset$.
\end{conj}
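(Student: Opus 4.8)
The plan is to split the conjecture into a purely ZFC reduction and a single hard extension step. Write $\lambda=\ls$ for brevity (the argument is identical for any $\lambda\ge\ls$). By $I(\lambda,{\bf K})=1$ fix the unique $M_\lambda\in K_\lambda$, and by $I(\lambda^+,{\bf K})=1$ (which asserts $K_{\lambda^+}\ne\emptyset$) fix the unique $M_{\lambda^+}\in K_{\lambda^+}$; note $M_\lambda\lneq M_{\lambda^+}$ since the L\"{o}wenheim-Skolem axiom (\ref{aecdef}(5)) produces a $\lk$-submodel of $M_{\lambda^+}$ of size $\lambda$, necessarily $\cong M_\lambda$. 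First I would show that it suffices to produce a \emph{single} proper $\lk$-extension of $M_{\lambda^+}$. Given any proper extension $M_{\lambda^+}\lk N$ with $a\in|N|\setminus|M_{\lambda^+}|$, the L\"{o}wenheim-Skolem and coherence axioms (\ref{aecdef}(5),(4)) yield $M_{\lambda^+}\lk N_1\lk N$ with $\nr{N_1}=\lambda^+$ and $a\in|N_1|$, so $M_{\lambda^+}\lneq N_1\cong M_{\lambda^+}$ by $\lambda^+$-categoricity. Transporting the fixed extension $M_{\lambda^+}\lneq N_1$ along isomorphisms via the isomorphism axiom (\ref{aecdef}(3b)), I build a strictly $\lk$-increasing continuous chain $\langle B_i:i<\lambda^{++}\rangle$ with every $B_i\cong M_{\lambda^+}$: at successors transport $N_1$, and at limits $\delta<\lambda^{++}$ the union has size $\lambda^+$, lies in $K_{\lambda^+}$ by the chain axiom (\ref{aecdef}(6a)), and stays $\cong M_{\lambda^+}$ by $\lambda^+$-categoricity, so the construction continues. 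Then $\bigcup_{i<\lambda^{++}}B_i\in K$ by \ref{dirfact}(1) has size $\lambda^{++}$, giving $K_{\lambda^{++}}\ne\emptyset$. This half uses only the AEC axioms of \ref{aecdef} and is unconditional.

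The remaining task is to produce one proper $\lk$-extension of $M_{\lambda^+}$, and this is where I expect the real difficulty. Writing $M_{\lambda^+}=\bigcup_{i<\lambda^+}A_i$ as a continuous $\lk$-increasing chain with each $A_i\cong M_\lambda$ and $A_i\lneq A_{i+1}$, the natural attempt is to grow a parallel chain $\langle C_i:i<\lambda^+\rangle$ with $A_i\lk C_i$, $C_i\lneq C_{i+1}$, $C_i\cong M_\lambda$, and $\bigcup_i C_i\supsetneq M_{\lambda^+}$. The successor step $C_i\rightsquigarrow C_{i+1}$ asks precisely to amalgamate $A_{i+1}$ and $C_i$ over $A_i$ inside $K_\lambda$. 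Thus the crux of the whole conjecture is a form of $\lambda$-amalgamation along the categoricity chain, and the hard part will be supplying it without assuming the amalgamation property.

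The main obstacle, stated plainly, is that the only amalgamation the hypotheses are known to yield is Shelah's (\ref{axifact}, from \cite{sh88}), which gives $\lambda$-$AP$ --- and hence, via the staircase above, the desired extension --- only under $2^\lambda<2^{\lambda^+}$; that is exactly the route proving the WGCH instance. To reach the statement in ZFC I would route the extension problem through this paper's axiomatization: by \ref{charmod} we have $K=\mathrm{Mod}(\sigma_{\bf K})$ with $\sigma_{\bf K}\in L_{\chi^+,\lambda^+}(\omega\cdot\omega)$, and by \ref{vasapp} it is enough to find a proper $L_{\infty,\lambda^+}(\omega\cdot\omega)$-elementary extension of $M_{\lambda^+}$. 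The plan is then to prove in ZFC that the $\lambda^+$-sized $M_{\lambda^+}$ is not $L_{\infty,\lambda^+}(\omega\cdot\omega)$-maximal, using the mutual isomorphism of the pieces $A_i\cong M_\lambda$ to witness enough homogeneity in its closed-game-quantifier theory to play the extension game one round longer. The difficulty --- and the reason the known proof retreats to $2^\lambda<2^{\lambda^+}$ --- is that game-quantifier infinitary logic admits no usable upward compactness or L\"{o}wenheim-Skolem theorem for producing proper elementary extensions, and the example of \cite{hsimon} shows AECs need not be $L_{\infty,\lambda}$-axiomatizable at all; so any ZFC construction must genuinely exploit the game quantifier $(\omega\cdot\omega)$ together with the double-categoricity input rather than a soft model-existence principle.
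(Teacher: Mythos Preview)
The statement you are addressing is recorded in the paper as a \emph{conjecture}, not a theorem: the paper attributes it to Grossberg (1994), notes that \cite[Theorem 3.8]{sh88} proves it only under extra hypotheses, and then proves the partial result \ref{presvarsuc} assuming $\lambda\ge\ls+I_2(\ls,{\bf K})$ and $\delta(\lambda,1)=\lambda^+$. There is no proof in the paper for you to be compared against; the problem is open in ZFC.

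Your first paragraph --- the reduction from ``$K_{\lambda^{++}}\ne\emptyset$'' to ``$M_{\lambda^+}$ has a proper $\lk$-extension'' via transporting one extension along a $\lambda^{++}$-chain --- is correct and standard. Your diagnosis that the crux is a $\lambda$-amalgamation along the categoricity chain, and that \ref{axifact} supplies it under $2^\lambda<2^{\lambda^+}$, is also accurate; this is exactly why the WGCH instance is known. The genuine gap is in your final paragraph: you propose to obtain a proper $L_{\infty,\lambda^+}(\omega\cdot\omega)$-elementary extension of $M_{\lambda^+}$ in ZFC and then invoke \ref{vasapp}, but you give no mechanism for producing such an extension, and you yourself observe that game-quantifier logics admit no upward model-existence principle that would do this. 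Nothing in the paper's axiomatization \ref{charmod} or in \ref{vasapp} yields proper extensions --- those results go in the other direction, recognizing when a given extension is a $\lk$-extension. So your proposal correctly isolates the obstacle but does not overcome it; as it stands it is a restatement of why the conjecture is hard rather than a proof.
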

\cite[Theorem 3.8]{sh88} has two additional hypotheses:
\begin{enumerate}
\item Both $K$ and $K^<$ are $PC_\lambda$; and
\item $\delta(\lambda,1)=\lambda^+$.
\end{enumerate}
Much of \cite{sh576} is dedicated to the special cases of Grossberg's conjecture under various strong assumptions (including non-ZFC axioms).

Here we delete hypothesis (1) above and work in ZFC. In addition to hypothesis (2), we assume that $\lambda\geq\ls+I_2(\ls,{\bf K})$. 
\begin{theorem}\mylabel{presvarsuc}{Theorem \thetheorem}
Let ${\bf K}$ be an AEC, $\chi\defeq\ls+I_2(\ls,{\bf K})$, $\lambda\geq\chi$ with $\delta(\lambda,1)=\lambda^+$. If ${\bf K}$ is categorical in $\lambda$ and $\lambda^+$, then ${\bf K}$ has a model of cardinality $\lambda^{++}$.
\end{theorem}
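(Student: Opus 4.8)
The plan is to verify the two hypotheses of \cite[Theorem 3.8]{sh88} that the statement does not assume outright — namely that $K$ and $K^<$ are both $PC_\lambda$ classes — and then to invoke that theorem together with the standing hypotheses $\delta(\lambda,1)=\lambda^+$ and categoricity in $\lambda$ and $\lambda^+$. The first of these is immediate: by \ref{presvar} (equivalently \ref{presvarcor}(1)), $K=PC(T,\Gamma,L)$ for a first-order theory $T$ in a language $L'\supseteq L$ of size $\chi$ and a set $\Gamma$ of $\chi$ many $L'$-types; since $\chi\leq\lambda$, padding $L'$ with unused symbols and $\Gamma$ with vacuously omitted types exhibits $K$ as a $PC_{\lambda,\lambda}=PC_\lambda$ class.

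For $K^<$ I would adapt the proof of \ref{presvar} to pairs. Expand the one-sorted language $\{R\}$ of $K^<$ to $L''\supseteq L'\cup\{R\}$, where $R$ is the unary predicate of $K^<$ and $L'$ carries the functions $\{f_k^n:k<\ls,n<\omega\}$ and $\{g_k^{m,l},h_k^{m,l}\}$ of \ref{presvar}. Let $T''$ consist of the sentences $\sigma^n$ and $\sigma^{m,n,l}$ of \ref{presvar} — which force the $L$-reduct $M:=M'\restriction L$ of a model $M'$ to be the directed union of the $K$-structures $\{f_k^n(a^n):k<\ls\}$, so that $M\in K$ and each such $K$-structure is $\leq M$ by \ref{dirfact}(1)(2) — together with the relativized axioms
\[
\forall a^n\ \Big(\bigwedge_{l<n}R\big(a^n(l)\big)\ \rightarrow\ \bigwedge_{k<\ls}R\big(f_k^n(a^n)\big)\Big)
\]
and the axioms $\exists x\,R(x)$ and $\exists x\,\neg R(x)$. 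The relativized axioms make $R^{M'}$ closed under the $f_k^n$; and since $\phi_i$ encodes the full atomic diagram of $M_i$, each $\{f_k^n(a^n):k<\ls\}$ is an $L$-substructure of $M$ containing $a^n$, so $R^{M'}$ is in fact closed under all $L$-functions. Hence the $L$-substructure $N$ of $M$ with universe $R^{M'}$ is the directed union of those $\{f_k^n(a^n):k<\ls\}$ with $a^n\subseteq R^{M'}$, each of which is $\leq M$; so $N\in K$ and $N\leq M$ by \ref{dirfact}(3), while $\exists x\,\neg R(x)$ gives $N<M$, whence $M'\restriction\{R\}=\langle|M|,|N|\rangle\in K^<$. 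Conversely, given $N<M$ in $K$, expand $M$ to an $L''$-structure as in \ref{presvar}, choosing the $K$-structure over a tuple $a^n\subseteq|N|$ inside $N$ (possible by the L\"owenheim--Skolem axiom applied within $N$ together with $N\leq M$ and transitivity of $\leq$) and compatibly with the directedness demanded by $\sigma^{m,n,l}$ via \ref{dirfact2} applied to $N\leq M$; with $R$ interpreted as $|N|$ this witnesses $\langle|M|,|N|\rangle\in PC(T'',\Gamma'',\{R\})$. Finally, convert the $\chi$-junctions of $T''$ to first order using Chang's presentation theorem exactly as in \ref{presvar}, which produces a type-set $\Gamma''$ of size $\chi$; thus $K^<$ is $PC_\chi$, hence $PC_\lambda$ since $\chi\leq\lambda$.

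With $K$ and $K^<$ both $PC_\lambda$, $\delta(\lambda,1)=\lambda^+$, and ${\bf K}$ categorical in $\lambda$ and $\lambda^+$, \cite[Theorem 3.8]{sh88} yields $K_{\lambda^{++}}\neq\emptyset$. I expect the only genuine work to lie in the second step, in checking that the $\{R\}$-reduct of a model of $T''$ really does land in $K^<$: the subtle points are that $R^{M'}$ must be closed under \emph{every} $L$-function symbol (which comes from the atomic-diagram content of the $\phi_i$, not from the relativized axioms alone), and that one needs $N\leq M$ rather than merely $N\subseteq_L M$ — which is exactly where coherence enters, through \ref{dirfact}(3). The converse inclusion is routine once one notes that \ref{dirfact2}, applied to the pair $N\leq M$, supplies compatible directed systems for $N$ and $M$ under which tuples inside $|N|$ generate $K$-structures inside $|N|$.
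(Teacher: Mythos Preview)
Your proposal is correct and follows essentially the same approach as the paper: verify that $K$ and $K^<$ are $PC_\chi$ (hence $PC_\lambda$) by adapting the construction of \ref{presvar} with an added predicate $R$ required to be closed under the $f_k^n$, then invoke \cite[Theorem 3.8]{sh88}. Your treatment is in fact slightly more careful than the paper's in two places --- you add the axiom $\exists x\,R(x)$ (the paper only states $\sigma_p$) and you spell out why $R^{M'}$ is closed under the original $L$-functions via the atomic-diagram content of the $\phi_i$ --- but these are refinements of the same argument, not a different route.
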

\begin{remark}
Our theorem applies to the case $\ls=\aleph_1$, $\lambda=\beth_\omega$ while \cite[Theorem 3.8]{sh88} cannot handle uncountable $\llk$ or $\ls$.
\end{remark}
\begin{proof}
We check that hypothesis (1) above is satisfied. Since $\lambda\geq\chi$,  it suffices to show that both $K$ and $K^<$ are $PC_\chi$ classes. $K$ is a $PC_\chi$ class by \ref{presvarcor}. To show that $K^<$ is also a $PC_\chi$ class, we will use the proof of \ref{presvar}, add a new predicate $R$ in $L'$ and encode \ref{charsub}(2) by the new functions $\{f_k^n:k<\ls,n<\omega\}$ (to lighten the notation, we omit the encoding of re-enumerations, but it is the same strategy as in \ref{presvar}). At the end, we will only leave $R$ in the reduct of the language.

The details are as follows: we expand the language $L$ to $L'$ which includes a new predicate $R$ and the functions $\{f_k^n:k<\ls,n<\omega\}$ as in \ref{presvar}. For $n<\omega$, we abbreviate $\{f_k^n:k<\ls\}$ as $\bar{f}^n$ and require that it maps an $n$-tuple to a model of size $\ls$ containing the tuple. This can be achieved by
$$\sigma^n\defeq\forall a^n\ \bigwedge_{l<n}\bigvee_{k<\ls}\big(f^n_k(a^n)=a^n(l)\big)\wedge\bigvee_{i<I(\lambda,{\bf K})}\phi_i\big(\bar{f}^n(a^n)\big)$$
We also require that given an $n$-tuple inside $R$, the model generated is within $R$:
$$\sigma^n_R\defeq\forall a^n\subseteq R\big(\bar{f}^n(a^n)\subseteq R\big)$$
Next, we want to require that the models generated are directed with respect to the tuple input. For $m,n,l<\omega$, 
\begin{align*}
\sigma^{m,n,l}\defeq\ &\forall b^m\ \forall c^n\ \forall d^l\Big[\ran(b^m)\cup\ran(c^n)\subseteq\ran(d^l)\rightarrow\\
&\Big(\bigvee_{i<I_2(\lambda,{\bf K})}\psi_i\big(\bar{f}^m(b^m),\bar{f}^l(d^l)\big)\wedge\bigvee_{j<I_2(\lambda,{\bf K})}\psi_j\big(\bar{f}^n(c^n),\bar{f}^l(d^l)\Big)\Big]
\end{align*}
The final requirement is that $R$ is a proper subset of the model:
$$\sigma_p\defeq\exists x(\neg R(x))$$
Notice that for $m,n,l<\omega$, the sentences $\sigma^n,\sigma^n_R,\sigma^{m,n,l}$ are $\chi$-junctions of (negation of) atomic formulas, so we can use Chang's presentation theorem to convert them to first-order formulas, by adding $\chi$-many new predicates to $L'$ to represent the $\chi$-conjunctions and disjunctions, and $\chi$-many $L'$-types to omit. This gives our $T$, $\Gamma$ and $L'$. 

We check that $K^<=PC(T,\Gamma,\{R\})$. If $\langle|M|,|N|\rangle$ is in $K^<$, then $N<M$ are in $K$. Expand the language to $L'$ and define $R^M\defeq N$. Inside $N$, build a directed system of ${\bf K}$-substructures of size $\ls$, indexed by the finite tuples of $N$. This determines $\bar{f}^n\restriction R^n$ for $n<\omega$. Now inside $M$, we extend the directed system to be indexed by the finite tuples of $M$. This determines $\bar{f}^n$ completely for $n<\omega$. Also, $M$ satisfies $\sigma^n$, $\sigma^n_R$ and $\sigma^{m,n,l}$ for $m,n,l<\omega$. Hence $M$ under the expanded language is in $EC(T,\Gamma)$ and its reduct to $\{R\}$ is in $PC(T,\Gamma,\{R\})$.
Conversely, if $M\in PC(T,\Gamma,\{R\})$, expand $M$ to $M'$ such that $M'\in EC(T,\Gamma)$ and define $N'\defeq R^{M'}$. By $\{\sigma^n,\sigma^n_R,\sigma^{m,n,l}:m,n,l<\omega\}$, $N'\restriction L$ is the union of the directed system of ${\bf K}$-structures of size $\ls$. By \ref{dirfact}(1), $N'\restriction L\in K$.  By $\{\sigma^n,\sigma^{m,n,l}:m,n,l<\omega\}$, the directed system can be extended to union $M'\restriction L$. By \ref{dirfact}(1) again, $M'\restriction L\in K$. By \ref{dirfact}(2), each ${\bf K}$-structure of the directed system is a ${\bf K}$-substructure $M'\restriction L$. But then the models of the original system that generates $N'\restriction L$ are all ${\bf K}$-substructures of $M'\restriction L$. By \ref{dirfact}(3), $N'\restriction L\leq M'\restriction L$. By $\sigma_p$, $N'\restriction L<M'\restriction L$. In other words, $\langle |M'|,|N'|\rangle=\langle |M|,R^M\rangle=M\in K^<$.
\end{proof}
As in Section 3, we can add extra assumptions to improve our results:

\begin{corollary}
\begin{enumerate}
\item If ${\bf K}$ is a universal class, then $\chi\defeq\ls+I_2(\ls,{\bf K})$ in \ref{presvar}, \ref{presvarcor} and \ref{presvarsuc} can be replaced by $\chi\defeq{\ls}$. \label{presvarcot}
\item If ${\bf K}$ is cateogrical in ${\ls}$ and ${\ls}^+$, has $AL$ and either \begin{enumerate}\item ${\bf K}$ has $\ls$-$AP$; or \item $2^{\ls}<2^{{\ls}^+}$ \end{enumerate} then $K$ and $K^<$ are both $PC_{\ls}$ classes when restricted to models of size $\geq{\ls}^+$. 
In either case, $AL$ can be replaced by stability in $\ls$.
\item In (2), $K$ can be made to a $PC_{\ls}$ class.
\end{enumerate}
\end{corollary}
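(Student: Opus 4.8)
The plan is to rerun the constructions in the proofs of \ref{presvar} and \ref{presvarsuc}, replacing the $(\ls+I_2(\ls,{\bf K}))$-sized coding data $\{\phi_i\}_{i<I(\ls,{\bf K})}$, $\{\psi_j\}_{j<I_2(\ls,{\bf K})}$ by the smaller data used in Section~3 to pass from \ref{charmod} to \ref{charuni}, \ref{charcat2} and \ref{charcat2cor}; the only new work is to push those substitutions through the Skolem functions $\{f^n_k,g^{m,l}_k,h^{m,l}_k\}$ and, for $K^{<}$, through the relativization to the new predicate $R$.

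For part (1) I would argue as in \ref{charuni}: since ${\bf K}$ is universal, $\lk$ is $\subseteq_L$ and $K$ is closed under $L$-substructures, so in $\sigma^{m,n,l}$ the pair disjunction $\bigvee_{j<I_2(\ls,{\bf K})}\psi_j$ can be replaced by the inclusion $\{f^m_k(b^m):k<\ls\}\subseteq\{f^l_k(d^l):k<\ls\}$, which holds automatically once each $\{f^n_k(\cdot):k<\ls\}$ is required to enumerate the $L$-substructure generated by its argument; this deletes the functions $g^{m,l}_k,h^{m,l}_k$ together with all dependence on $I_2(\ls,{\bf K})$. What remains, $\{\sigma^n\}\cup\{\sigma^{m,n,l}\}$, mentions only the $(\ls+I(\ls,{\bf K}))$-sized family $\{\phi_i\}$, which has size $\le\ls$ under the running hypothesis $I(\ls,{\bf K})\le\ls$; so Chang's presentation theorem produces $L'$ and $\Gamma$ of size $\le\ls$, witnessing that $K$ is $PC_{\ls}$, after which \ref{presvarcor} and \ref{presvarsuc} carry over verbatim with $\chi=\ls$.

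For parts (2) and (3), \ref{axifact0}(1) --- after \ref{axifact} supplies $\ls$-$AP$ in case (b) from categoricity in $\ls$ and $\ls^+$, and immediately if $AL$ is weakened to stability in $\ls$ --- gives stability in $\ls$ and hence a $(\ls,\omega)$-limit model. I would fix $M\lk N$ in $K_\ls$ with $N$ a $(\ls,\omega)$-limit over $M$, let $\phi$ code the isomorphism type of $M$ and $\psi$ that of $(M,N)$, and rerun the presentation theorems of \ref{presvar} and of the $K^{<}$-part of \ref{presvarsuc} restricted to structures of size $\ge\ls^+$; each such member of $K$ is $\ls^+$-saturated (by $AL$ and $\ls^+$-categoricity, cf.\ \ref{charcat2}). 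By $\ls$-categoricity one replaces $\bigvee_{i<I(\ls,{\bf K})}\phi_i$ by $\phi$; and, since inside an $\ls^+$-saturated model every $\lk$-comparable pair of $\ls$-sized ${\bf K}$-substructures that occurs can be enlarged to a $(\ls,\omega)$-limit pair --- hence, by \ref{axifact0}(2), (3) and $\ls$-categoricity, is isomorphic to $(M,N)$, which is the commuting-diagram step of \ref{charcat2} --- one replaces $\bigvee_{j<I_2(\ls,{\bf K})}\psi_j$ by the coherence-plus-$\psi$ formula of \ref{charcat2}, arranging the witnessing directed system (and, for $K^{<}$, the subsystem generating $R$) to consist of such limit pairs. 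This bounds $|L'|$ and $|\Gamma|$ by $\ls$, proving (2). For (3) one appends to the $K$-presentation a disjunct for $M\in K_\ls$: by $\ls$-categoricity this is the first-order assertion, with one $\ls$-disjunction, that for some singleton $a$ the set $\{f^1_k(a):k<\ls\}$ exhausts $|M|$ and satisfies $\phi$ --- the analogue of the first disjunct of $\sigma$ in \ref{charcat2} --- which uses no $\psi_j$, so $\chi$ remains $\ls$ while \ref{dirfact} still yields $K=PC(T,\Gamma,L)$.

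The step I expect to be the main obstacle is the bookkeeping that legitimizes replacing $\bigvee_j\psi_j$ by the single $\psi$: inside a structure of size $\ge\ls^+$ one must build the witnessing directed system so that simultaneously (i) every piece has size $\ls$, (ii) the system is directed, and (iii) every $\lk$-comparable pair occurring in it is a $(\ls,\omega)$-limit pair --- and check that the re-enumeration functions $g^{m,l}_k,h^{m,l}_k$ can still be chosen compatibly under (iii). This is the first-order incarnation of the limit-model argument of \ref{charcat2}; it goes through because $\ls$-categoricity together with uniqueness of $(\ls,\omega)$-limit models over a fixed base (\ref{axifact0}(3)) provide the needed isomorphisms, while \ref{axifact0}(2) ensures that enlarging the base of a limit pair keeps it a limit pair, so (ii) and (iii) do not conflict; the hypothesis of size $\ge\ls^+$ is precisely what guarantees there is room to pass to limits inside the model, the size-$\ls$ case being absorbed by the extra $\phi$-disjunct in (3). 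For $K^{<}$ one reuses the relativized formula $\sigma_\leq^R$ of \ref{charsub}(2), now assembled from $\phi$ and $\psi$, and rechecks that \ref{dirfact}(3) still applies; for part (1) the only point to watch is that the number of omitted types stays $\le\ls$, which is exactly where $I(\ls,{\bf K})\le\ls$ is needed.
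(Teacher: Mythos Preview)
Your proposal is correct and follows essentially the same route as the paper's proof sketch: for (1) drop the $\psi_j$'s since $\lk$ is $\subseteq_L$; for (2) implement the coherence-plus-$\psi$ encoding of \ref{charcat2} by adjoining extra Skolem functions (the paper names them $\{d^{m,l}_k,e^{m,l}_k\}$) to produce the common $(\ls,\omega)$-limit model; for (3) append exactly the size-$\ls$ disjunct you describe. Your discussion of the bookkeeping obstacle and of the hypothesis $I(\ls,{\bf K})\le\ls$ in (1) makes explicit what the paper leaves to the reader (cf.\ the table of new results and \ref{charobs}(1)).
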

\begin{proof}[Proof sketch]
\begin{enumerate}
\item Combine the proof of \ref{presvar}, \ref{presvarcor} and \ref{presvarsuc} with \ref{charuni}. The point is that we do not need to encode ${\bf K}$-substructure relation.
\item In \ref{charcat2} and \ref{charcat2cor}, we used coherence to encode $a\leq b$ in the infinitary language: $a\subseteq b$ and there is $c$ which is $(\ls,\omega)$-limit over $a$ and $b$. Thus we can add two more sets of functions $\{d^{m,l}_k,e^{m,l}_k:k<\ls,m+l<\omega\}$ (which represent $c$) in addition to the original $\{g_k^{m,l},h_k^{m,l}:k<\ls,m+l<\omega\}$ (which represent $a$ and $b$) in \ref{presvar}.
\item It remains to handle the case when the models are of size $\ls$: add a disjunct to the theory $T$ in \ref{presvar} which stipulates that the model is generated by an element $a^1$:
$$\exists a^1\Big(\forall x^1\bigvee_{k<\ls}(f_k^1(a^1)=x^1)\wedge\phi\big(\{f_k^1(a^1):k<\ls\}\big)\Big).$$
\end{enumerate}
\end{proof}

\begin{remark}
\begin{enumerate}
\item \ref{charobs} also applies to the above corollary. If $\ls=\al$ in (3), then we obtain: if ${\bf K}$ is stable in $\al$, has $\al$-$AP$, $I(\al,{\bf K})\leq\al$ and $I(\aleph_1,{\bf K})=1$, then $K$ is $PC_{\al}$. This special case (with the extra assumption of categoricity in $\aleph_1$) provides an alternative proof to \cite[Theorem 4.2]{ss} which uses results from descriptive set theory.
\item We do not know if (3) also applies to $K^<$, for a similar reason in \ref{charpropcor}(2),(3).
\end{enumerate}
\end{remark}

\section{Generalization to $\mu$-AECs}

Our strategy of encoding AECs is also applicable to $\mu$-AECs. 
\begin{definition}\cite[Definitions 2.1,3.1]{muaec}\mylabel{chap2ques2}{Definition \thetheorem}
Let $L$ be a $(<\mu)$-ary language. A $\mu$-AEC ${\bf K}=\langle K,\lk\rangle$ in $L$ satisfies the axioms (1)(2)(3)(4) in \ref{aecdef} in addition to
\renewcommand{\labelenumi}{\alph{enumi}.}
\begin{enumerate}
\item Directed system axioms: if $\langle M_i\in K:i\in I\rangle$ is a $\mu$-directed system, then $M\defeq\bigcup_{i\in I}M_i\in K$ and for all $i\in I$, $M_i\lk M$. If in addition $N\in K$ with $M_i\lk N$ for all $i\in I$, then $M\lk N$.
\item L\"{o}wenheim-Skolem axiom: there exists a cardinal $\lambda=\lambda^{<\mu}\geq|\llk|+\mu$ such that for any $M\in K$, $A\subseteq|M|$, there is $N\lk M$ such that $A\subseteq|N|$ with $\nr{N}\leq|A|^{<\mu}+\lambda$. We call the minimum such cardinal the L\"{o}wenheim-Skolem number $\ls$. 
\end{enumerate}
\end{definition}

The analogs of \ref{charmod}, \ref{charprop} and \ref{charsub} hold in $\mu$-AECs.
\begin{proposition}\mylabel{charmod2}{Proposition \thetheorem}
Let ${\bf K}$ be a $\mu$-AEC in $L$ and $\lambda\defeq\ls$. $K$ is axiomatizable by an $L_{(\lambda+I_2(\lambda,{\bf K}))^+,\lambda^+}(\mu\cdot\mu)$ sentence $\sigma_{\bf K}$. In other words, for any $L$-structure $M$, $M\in K$ iff $M\vDash \sigma_{\bf K}$.
\end{proposition}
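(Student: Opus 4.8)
The plan is to mirror the proof of \ref{charmod} essentially verbatim, replacing the finite/$\omega$ arithmetic by $\mu$-arithmetic throughout. The sentence $\sigma_{\bf K}$ will again be a game-quantifier sentence, but now the game has $\mu\cdot\mu$ rounds instead of $\omega\cdot\omega$: the variables $(x_\alpha,y_\alpha)_{\alpha<\mu\cdot\mu}$ are tuples of length $\lambda$, and $\sigma_{\bf K}$ stipulates that Player II can, round by round, choose $y_\alpha\lk M$ of size $\lambda$ containing $\ran(x_\alpha)\cup\bigcup_{\beta<\alpha}\ran(y_\beta)$, subject to the $\phi_i$-clauses (each $y_\alpha$ realizes some isomorphism type in $K_\lambda$) and the $\psi_j$-clauses (each pair $(y_\beta,y_\alpha)$ with $\beta<\alpha$ realizes some $\lk$-pair isomorphism type in $K_\lambda$). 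The forward direction ($M\in K\Rightarrow M\vDash\sigma_{\bf K}$) is immediate: Player II uses the L\"owenheim-Skolem axiom of \ref{chap2ques2} to produce $y_\alpha$ (note $\nr{\ran(x_\alpha)\cup\bigcup_{\beta<\alpha}\ran(y_\beta)}\leq \lambda\cdot\card{\alpha}=\lambda$ since $\alpha<\mu\cdot\mu\leq\lambda$ and $\lambda^{<\mu}=\lambda$), and the directed-system axiom (a) together with coherence gives $y_\beta\lk y_\alpha$ inductively.

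The converse is where the $\mu$-AEC axioms must be deployed carefully. Assuming $M\vDash\sigma_{\bf K}$, I would build a $\mu$-directed system $\langle M_a\in K_\lambda:a\in I\rangle$ of union $M$, indexed by tuples $a$ in $M$ of length $<\mu$, ordered by inclusion of ranges; then directed-system axiom (a) yields $M\in K$. The construction proceeds in $\mu$ stages: at stage $\xi<\mu$ we handle tuples of length $\xi$ (or $1+\xi$ to start the induction), each such $M_a$ being realized as $\bigcup_{\alpha<1+\mu\cdot\zeta}y_\alpha$ from the game played on some single tuple, with the coherence-type requirement that $\ran(b)\subseteq\ran(a)$ implies $M_b\lk M_a$. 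The key amalgamation-within-$M$ step — given $M_a,M_b$ already built, find $M^*\lk M$ extending both — is done as in \ref{charmod} by interleaving two runs of the game for $\mu$ further rounds and taking the union of a $\subseteq$-increasing chain $\langle N_k:k<\mu\rangle$; but here, at the crucial step for $\mu$-directedness, when we must amalgamate $<\mu$ many previously-built pieces $\{M_{a_i}:i<\gamma\}$ with $\gamma<\mu$ simultaneously (which happens because a tuple of length $<\mu$ decomposes into $<\mu$ shorter tuples), we need to run $\gamma$-many games in parallel and build a $\subseteq$-chain of length $\mu$ whose union dominates all of them, using the chain/directed-system axioms at limit stages. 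The bookkeeping: a tuple $c$ of length $\xi<\mu$ has at most $\card{\xi}^{<\mu}=\lambda$-many... actually at most $2^{\card{\xi}}$-many decompositions into two subtuples, but more relevantly we only need $<\mu$-many pieces to be amalgamated, and since the game has $\mu\cdot\mu$ rounds there is always "room" in a single run to absorb $<\mu$ further extensions.

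The main obstacle I anticipate is exactly this parallel-amalgamation of $<\mu$-many pieces and verifying that $\mu\cdot\mu$ rounds suffice. In the AEC case ($\mu=\omega$) each finite tuple has finitely many decompositions, and each $M^*$-construction costs $\omega$ extra rounds, so $\omega\cdot\omega$ rounds on a single generating element suffice to absorb everything needed at all finite stages. For $\mu$-AECs, a tuple of length $<\mu$ can have up to $2^{<\mu}$ decompositions, so one cannot naively iterate; instead I would reorganize the construction so that at stage $\xi$, for each tuple $a$ of length $\xi$, $M_a$ is defined by one transfinite run of length $\mu\cdot\zeta$ (for an appropriate $\zeta<\mu$) that directly amalgamates the $<\mu$-many $M_b$ with $\ran(b)\subsetneq\ran(a)$ in one sweep via directed-system axiom (a) applied to the $<\mu$-directed family of those $M_b$'s — this is legitimate since the game clauses force every such finite-stage union to lie in $K_\lambda$ and every pair to be $\lk$-comparable, hence that family is genuinely $<\mu$-directed inside $M$. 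One must check that the index $\mu\cdot\zeta$ never reaches $\mu\cdot\mu$, which holds because $\zeta<\mu$ at every stage and $\mu$ is regular (or one simply takes $\mu\cdot\mu$ as a generous upper bound and notes each individual run uses $<\mu\cdot\mu$ rounds). The statements \ref{charmod2} for $\sigma_\leq$ and the $\mu$-version of \ref{charsub} then follow by the identical modification of \ref{charprop} and \ref{charsub}, replacing $\omega\cdot\omega$ by $\mu\cdot\mu$ and $\al$-directedness by $\mu$-directedness; I would only sketch these, since no new idea is involved.
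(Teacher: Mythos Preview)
Your overall strategy matches the paper's: replace $\omega$ by $\mu$ throughout, run a $(\mu\cdot\mu)$-round game, build a $\mu$-directed system indexed by $(<\mu)$-tuples, and amalgamate previously constructed pieces by extending their games $\mu$ further rounds. The forward direction and the two-piece amalgamation step are exactly as in the paper.

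However, your proposed workaround for the decomposition obstacle contains an error. You claim the family $\{M_b:\ran(b)\subsetneq\ran(a)\}$ has $<\mu$ members and is $\mu$-directed; neither holds in general. There can be $2^{|\ran(a)|}\geq\mu$ such subsets, and the poset of proper subsets of a $(<\mu)$-set is not $\mu$-directed (the singletons already have no common proper-subset upper bound). Your claim that ``every pair [is] $\lk$-comparable'' is also false: the inductive hypothesis gives $M_b\lk M_{b'}$ only when $\ran(b)\subseteq\ran(b')$, not for arbitrary pairs. So the one-sweep amalgamation via the directed-system axiom does not apply as stated.

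The paper's fix is simpler than your reorganization: at successor stages decompose $c$ only as \emph{singleton plus remainder}, i.e.\ as $\{c(i)\}\cup\big(c\setminus\{c(i)\}\big)$ for each $i<l(c)$, which gives $l(c)<\mu$ decompositions. This suffices because any proper $d\subsetneq\ran(c)$ misses some $c(i)$, hence $d\subseteq\ran(c)\setminus\{c(i)\}$ and $M_d\lk M_{c\setminus\{c(i)\}}\lk M_c$ by the inductive hypothesis. At a limit stage $\xi$ one amalgamates the $|\xi|<\mu$ many pieces coming from shorter initial segments, which are cofinal among the $M_b$ with $\ran(b)\subsetneq\ran(c)$. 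In both cases only $<\mu$ pieces genuinely need to be amalgamated, and your argument that $\mu$ extra rounds per game suffice then goes through.
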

\begin{proof}
Similar to the proof in \ref{charmod}. The difference is that we allow the iterated use of L\"{o}wenheim-Skolem and coherence axioms $(\mu\cdot\mu)$-many times instead of $(\omega\cdot\omega)$-many.  We give the details below:

As usual, list the isomorphism types $\{M/_{\cong}:M\in K_\lambda\}$ by $\langle M_i:i<I(\lambda,{\bf K})\rangle$ and those of $\{(M,N)/_{\cong}:M\leq N\text{ in }K_\lambda\}$ by $\langle (M_j,N_j):j<I_2(\lambda,{\bf K})\rangle$. For $i<I(\lambda,{\bf K})$, let $\phi_i(x)$ be an $L_{\lambda^+,\lambda^+}$ formula that encodes the isomorphism type of $M_i$ with a fixed enumeration of the universe $|M|=\langle m_k^i:k<\lambda\rangle$. For variables $x=\langle x_k:k<\lambda\rangle$,
\begin{align*}
\phi_i(x)\defeq\bigwedge\{&\theta(x_{\alpha_0},\dots,x_{\alpha_{\xi}}):M_i\vDash \theta[m_{\alpha_0}^i,\dots,m_{\alpha_{\xi}}^i];\ \xi<\mu,\ \alpha_0,\dots,\alpha_{\xi}<\lambda,\\&\theta\text{ is an atomic $L$-formula or its negation with $s$ free variables}\}
\end{align*}
Notice that $\phi_i$ is a conjunction of $\lambda^{<\mu}=\lambda$ many formulas so it is inside $L_{{\lambda}^+,{\lambda}^+}$. Similarly for $j<I_2(\lambda,{\bf K})$, let $\psi_j(x,y)$ be an $L_{\lambda^+,\lambda^+}$ formula that encodes the isomorphism type of $(M_j,N_j)$ with fixed enumerations, where $|M_j|=\{ m_k^j:k<\lambda\}$, $|N_j|=\{ n_k^j:k<\lambda\}$. For variables $x=\langle x_k:k<\lambda\rangle$ and $y=\langle y_k:k<\lambda\rangle$,
\begin{align*}
\psi_j(x;y)\defeq\bigwedge\{&\theta(x_{\alpha_0},\dots,x_{\alpha_{\xi}};y_{\beta_0},\dots,y_{\beta_{\xi'}}):N_j\vDash \theta[m_{\alpha_0}^j,\dots,m_{\alpha_{\xi}}^j;n_{\beta_0}^j,\dots,n_{\beta_{\xi'}}^j],\\& \xi,\xi'<\mu;\ \alpha_0,\dots,\alpha_{\xi},\beta_0,\dots,\beta_{\xi'}<\lambda,\\&\theta\text{ is an atomic $L$-formula or its negation with $\xi+\xi'$ free variables}\}
\end{align*}
$\psi_j$ is also a conjunction of $\lambda^{<\mu}=\lambda$ many formulas so it is inside $L_{{\lambda}^+,{\lambda}^+}$. The axiomatization $\sigma_{\bf K}$ consists of two components (the variables all have length $\lambda$):
\begin{align*}
\sigma_{\bf K}\defeq(\forall x_\alpha\exists y_\alpha)_{\alpha<\mu\cdot\mu}\,&\bigwedge_{\alpha<\mu\cdot\mu}\Big[(x_\alpha\subseteq y_\alpha)\wedge\exists z_\alpha\Big((y_\alpha\approx z_\alpha)\wedge\bigvee_{i<I(\lambda,{\bf K})}\phi_i(z_\alpha)\Big)\wedge\\
&\bigwedge_{\beta<\alpha<\mu\cdot\mu}\exists u_{\beta,\alpha}\exists v_{\beta,\alpha}\Big((y_\beta\approx u_{\beta,\alpha})\wedge(y_\alpha\approx v_{\beta,\alpha})\wedge\bigvee_{j<I_2(\lambda,{\bf K})}\psi_j(u_{\beta,\alpha},v_{\beta,\alpha})\Big)\Big]
\end{align*}

Suppose $M\in K$, we show that Player II can win the associated game in $\sigma_{\bf K}$. In the $\alpha$-th round, Player I provides some $x_\alpha$ of length $\lambda$. By L\"{o}wenheim-Skolem axiom, pick any $y_\alpha\leq M$ of size $\lambda$ such that $\ran(x_\alpha)\cup\bigcup_{\beta<\alpha}\ran(y_\beta)\subseteq\ran(y_\alpha)$. By inductive hypothesis, for $\beta<\alpha$, we have $y_\beta\leq M$. By coherence axiom, $y_\beta\leq y_\alpha$ as desired.

Suppose $M\vDash\sigma_{\bf K}$. We will build a $\mu$-directed system $\langle M_a\in K_\lambda:a\in I\rangle$ of union $M$, with $I$ being the set of tuples of length $(<\mu)$ in $M$, ordered by inclusion. By directed system axioms, $M\in K$. In the proof of \ref{charmod}, we showed that given $M_a$ and $M_b$ generated by the games of the singletons $s$ and $t$, it is possible to find $M^*\geq M_a,M_b$ by extending those games by $\omega$-many rounds. In the $\mu$-AEC case, without the usual chain axioms we do not know if $M^*\in K$, so we extend those games by $\mu$-many rounds instead to obtain an increasing (but not necessarily continuous) chain $\langle N_k:k<\mu\rangle$ and define $M^*=\bigcup_{k<\mu}N_k\in K$. A similar argument shows: let $\delta<\mu$ and if for $\alpha<\delta$,  $M_\alpha$ is generated by the game of some tuple $a_\alpha$ of length $<\mu$, then we can extend the games by $\mu$-many rounds to obtain $M^*$ that extends all $M_\alpha$. This allows us to get past the limit stages which were absent in the original proof, and continue to build $M_a$ for $l(a)<\mu$. Given a tuple $c$ of length $<\mu$, there are less than $\mu$-many ways to decompose $c$ into a union of a singleton and a tuple of length $<\mu$. Thus we can still combine all copies of $M^*$ from the decompositions of $M_c$ as in the original proof.
\end{proof}
\begin{proposition}\mylabel{charprop2}{Proposition \thetheorem}
Let ${\bf K}$ be a $\mu$-AEC in $L$ and $\lambda\defeq\ls$. There is a formula in $L_{(\lambda+I_2(\lambda,{\bf K}))^+,\lambda^+}(\mu\cdot\mu)$ that encodes the ${\bf K}$-substructure relation: for any $M\in K$, $a\subseteq|M|$ of size $\lambda$, $M\vDash\sigma_\leq [a]$ iff $a\leq M$ (the enumeration of $a$ does not matter).
\end{proposition}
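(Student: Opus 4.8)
The plan is to mimic the definition of $\sigma_\leq$ from \ref{charprop} but replace the $(\omega\cdot\omega)$-game quantifier by the $(\mu\cdot\mu)$-game quantifier, exactly as the passage from \ref{charmod} to \ref{charmod2} replaces $(\omega\cdot\omega)$ by $(\mu\cdot\mu)$. Concretely, take all variables to have length $\lambda$ and set
\begin{align*}
\sigma_{\leq}(x)\defeq(\forall x_\alpha\exists y_\alpha)&_{\alpha<\mu\cdot\mu}\ \Big(\exists u\exists v(x\approx u)\wedge(y_0\approx v)\wedge\bigvee_{j<I_2(\lambda,{\bf K})}\psi_j(u,v)\Big)\wedge\\&\bigwedge_{\alpha<\mu\cdot\mu}\Big[(x_\alpha\subseteq y_\alpha)\wedge\exists z_\alpha\Big((y_\alpha\approx z_\alpha)\wedge\bigvee_{i<I(\lambda,{\bf K})}\phi_i(z_\alpha)\Big)\wedge\\
&\;\bigwedge_{\beta<\alpha<\mu\cdot\mu}\exists u_{\beta,\alpha}\exists v_{\beta,\alpha}\Big((y_\beta\approx u_{\beta,\alpha})\wedge(y_\alpha\approx v_{\beta,\alpha})\wedge\bigvee_{j<I_2(\lambda,{\bf K})}\psi_j(u_{\beta,\alpha},v_{\beta,\alpha})\Big)\Big]
\end{align*}
where $\phi_i,\psi_j$ are the isomorphism-type formulas built as in the proof of \ref{charmod2} (they are $\lambda$-junctions of atomic formulas and their negations, since $\lambda^{<\mu}=\lambda$). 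One checks first that this is a legitimate $L_{(\lambda+I_2(\lambda,{\bf K}))^+,\lambda^+}(\mu\cdot\mu)$ formula: there are $\mu\cdot\mu$ rounds, hence fewer than $\lambda^+$ quantified variables and fewer than $(\lambda+I_2(\lambda,{\bf K}))^+$ conjuncts, and the initial clause singles out $x$ as $M_j$-isomorphic to a substructure of $y_0 \cong N_j$ for some pair index $j$. Independence of the enumeration of $a$ is immediate from the definition of $\approx$.

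Next I would prove the two directions. For the forward direction, suppose $a\in K_\lambda$ and $a\leq M$. Player II wins the game: in round $0$, given $x_0$, apply the L\"owenheim--Skolem axiom for $\mu$-AECs (\ref{chap2ques2}(b), using $\lambda^{<\mu}=\lambda$) to choose $y_0\leq M$ of size $\lambda$ with $\ran(x_0)\cup\ran(a)\subseteq\ran(y_0)$; coherence (which holds in $\mu$-AECs, being axiom (4) of \ref{aecdef}) gives $a\leq y_0$, so $(a,y_0)\cong(M_j,N_j)$ for some $j$ and the first conjunct is satisfied. In round $\alpha$, given $x_\alpha$, pick $y_\alpha\leq M$ of size $\lambda$ containing $\ran(x_\alpha)\cup\bigcup_{\beta<\alpha}\ran(y_\beta)$; coherence gives $y_\beta\leq y_\alpha$ for $\beta<\alpha$, satisfying the remaining conjuncts. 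For the converse, suppose $a\subseteq|M|$ has size $\lambda$ and $M\vDash\sigma_\leq[a]$. Following the proof of \ref{charmod2}, build a $\mu$-directed system $\langle M_b\in K_\lambda:b\in I\rangle$ of union $M$, indexed by $(<\mu)$-tuples $b$ of $M$; the extra feature forced by the first conjunct of $\sigma_\leq$ is that each $M_b$ can be taken with $a\leq M_b$ — indeed the winning strategy guarantees $y_0\geq a$ with $a\leq y_0$, and every member of the system is obtained from $y_0$ by the same $\mu$-round extension process, so $a\leq M_b$ for all $b$ by coherence and the directed system axiom. By \ref{chap2ques2}(a) (the directed system axioms, giving both $M\in K$ and $M_b\leq M$), and transitivity of $\leq$, we get $a\leq M$.

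The main obstacle is the same one encountered in \ref{charmod2}: in a $\mu$-AEC there are no ordinary chain axioms, only the $\mu$-directedness axiom, so to amalgamate two pieces $M_a$ and $M_b$ of the system one cannot simply take a union along an $\omega$-chain. This is exactly why the game runs for $\mu\cdot\mu$ rounds rather than $\omega\cdot\omega$: one extends the relevant games by $\mu$-many rounds and takes the union over a $\mu$-indexed (not necessarily continuous) increasing chain, which lands back in $K$ by the directed system axioms; a further $\mu$-round argument handles the $(<\mu)$-many decompositions of a $(<\mu)$-tuple $c$ and the limit stages $\delta<\mu$ that were absent in the AEC case. Beyond carefully transcribing that bookkeeping from \ref{charmod2} — and threading the requirement $a\leq M_b$ through it — the proof is routine.
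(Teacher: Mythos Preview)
Your proposal is correct and follows essentially the same approach as the paper: define $\sigma_\leq$ as in \ref{charprop} with $\omega\cdot\omega$ replaced by $\mu\cdot\mu$, verify the forward direction via L\"owenheim--Skolem and coherence, and for the converse build a $\mu$-directed system as in \ref{charmod2} with the additional requirement that each piece contains $a$ as a ${\bf K}$-substructure. Your discussion of the bookkeeping obstacle (absence of chain axioms forcing $\mu$-round extensions) is exactly the point the paper defers to \ref{charmod2}.
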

\begin{proof}
Define $\sigma_{\leq}(x)$ as in \ref{charprop} but replace $\omega\cdot\omega$ by $\mu\cdot\mu$. The enumeration of $a$ does not matter by our definition of $\approx$. If $a\in K_\lambda$ and $a\leq M$, then $M\vDash \sigma_\leq[a]$ by L\"{o}wenheim-Skolem and coherence axioms. Conversely suppose $a\subseteq|M|$ of size $\lambda$ and $M\vDash\sigma_\leq [a]$. As in \ref{charmod2}, we can build a $\mu$-directed system $\langle M_\alpha\in K_\lambda:\alpha\in I\rangle$ of union $M$ such that for any $\alpha\in I$, $M_\alpha\geq a$. By directed system axioms, $M\in K$ and $M_\alpha\leq M$ for all $\alpha\in I$. By transitivity of $\leq\ $, $a\leq M$ as desired. 
\end{proof}
\begin{proposition}\mylabel{charsub2}{Proposition \thetheorem}
Let ${\bf K}$ be a $\mu$-AEC in $L$ and $\lambda\defeq\ls$. Let $M,N\in K$.
\begin{enumerate}
\item  $M\leq N$ iff $M\subseteq_{\{\sigma_\leq\}}N$ (if $a\subseteq |M|$ is of size $\lambda$, then $M\vDash \sigma_\leq[a] $ iff $N\vDash\sigma_\leq[a]$).
\item Let $R$ be a new predicate where $N^R=|M|$ closed under permutations.  $M\leq N$ iff $(N,R)\vDash\forall b\ \big(\sigma_\leq^R(b)\rightarrow\sigma_\leq(b)\big)$ where $\sigma_\leq^R$ is the relativized version of $\sigma_\leq$ inside $R$ (replace $(\omega\cdot\omega)$ by $(\mu\cdot\mu)$ in the definition of $\sigma_\leq^R$ in \ref{charsub}).
\end{enumerate}
\end{proposition}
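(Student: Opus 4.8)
The plan is to transcribe the proof of \ref{charsub} into the $\mu$-AEC setting almost word for word, with three systematic substitutions: every appeal to a directed system together with \ref{dirfact} or a chain axiom becomes an appeal to a $\mu$-directed system together with the directed system axiom (axiom (a) of \ref{chap2ques2}); every appeal to \ref{charmod} or \ref{charprop} becomes an appeal to \ref{charmod2} or \ref{charprop2}; and the $(\omega\cdot\omega)$-game quantifier in $\sigma_\leq$ and $\sigma_\leq^R$ becomes the $(\mu\cdot\mu)$-game quantifier, as already stipulated in the statement. The only AEC closure property invoked in \ref{charsub} beyond this machinery is coherence (axiom (4) of \ref{aecdef}), which $\mu$-AECs retain, so the argument goes through unchanged.

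For (1), fix $a\subseteq|M|$ of size $\lambda$. If $M\leq N$, then by \ref{charprop2} we have $M\vDash\sigma_\leq[a]$ iff $a\leq M$: if $a\leq M$ then $a\leq M\leq N$, so $N\vDash\sigma_\leq[a]$; conversely if $a\leq N$, then $a\subseteq M\subseteq N$ and coherence give $a\leq M$, so $M\vDash\sigma_\leq[a]$. Hence $M\subseteq_{\{\sigma_\leq\}}N$. For the converse, following the construction in the proof of \ref{charmod2} I would build inside $M$ a $\mu$-directed system $\langle M_\alpha\in K_\lambda:\alpha\in I\rangle$ with $\bigcup_{\alpha\in I}M_\alpha=M$ and each $M_\alpha\leq M$; then $M\vDash\sigma_\leq[M_\alpha]$ by \ref{charprop2}, so $N\vDash\sigma_\leq[M_\alpha]$ since $M\subseteq_{\{\sigma_\leq\}}N$, so $M_\alpha\leq N$ by \ref{charprop2} again, and the directed system axiom yields $M\leq N$.

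For (2), where (as in \ref{charsub}) $R^N=|M|$ and $M\subseteq N$ as $L$-structures, so that the $L$-structure picked out by $R$ is $M$ itself, the formula $\sigma_\leq^R$ evaluated in $(N,R)$ is just $\sigma_\leq$ evaluated in $M$. Thus if $M\leq N$ and $(N,R)\vDash\sigma_\leq^R[b]$, then $b\leq M$ by \ref{charprop2}, hence $b\leq N$ by transitivity of $\leq$, i.e. $N\vDash\sigma_\leq[b]$; so $(N,R)\vDash\forall b\,(\sigma_\leq^R(b)\to\sigma_\leq(b))$. Conversely, assuming this implication holds, I would again build inside $M$ a $\mu$-directed system $\langle M_\alpha\in K_\lambda:\alpha\in I\rangle$ of union $M$ with each $M_\alpha\leq M$ (possible by L\"{o}wenheim-Skolem and coherence in $M\in K$); then $M_\alpha\leq M$ gives $(N,R)\vDash\sigma_\leq^R[M_\alpha]$, hence $N\vDash\sigma_\leq[M_\alpha]$ by the hypothesis, hence $M_\alpha\leq N$ by \ref{charprop2}, and the directed system axiom gives $M\leq N$. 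The only step needing genuine care — and the sole substantive difference from \ref{charsub} — is that, lacking the ordinary chain axioms, the systems built inside $M$ must be honestly $\mu$-directed before the directed system axiom of \ref{chap2ques2} can be applied; but this is exactly what the proof of \ref{charmod2} supplies (indexing by the $(<\mu)$-tuples of $M$ and absorbing limit stages by running $\mu$ extra rounds of the relevant games), so no new obstacle actually arises.
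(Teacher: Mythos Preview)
Your proposal is correct and follows essentially the same approach as the paper: both proofs transcribe \ref{charsub} verbatim, replacing $\aleph_0$-directed systems and \ref{dirfact} by $\mu$-directed systems and the directed system axioms of \ref{chap2ques2}, and invoking \ref{charprop2} in place of \ref{charprop}. Your added remark about needing honest $\mu$-directedness at limit stages (supplied by the construction in \ref{charmod2}) is apt, though note that in the converse directions here $M\in K$ is already given, so the $\mu$-directed resolution of $M$ can be built directly from L\"{o}wenheim-Skolem and coherence without appealing to the game.
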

\begin{proof}
Similar to the proof in \ref{charsub}. The difference is that instead of building $\al$-directed systems, we build $\mu$-directed systems. We give details below:

\begin{enumerate}
\item If $M\leq N$ and let $a\subseteq M$. If $M\vDash \sigma_\leq[a]$, then $a\leq M\leq N$ showing $N\vDash\sigma_\leq[a]$. If $N\vDash\sigma_\leq[a]$, then $a\leq N$. By coherence, $a\leq M$ and so $M\vDash\sigma_\leq[a]$. Conversely, build a $\mu$-directed system $\langle M_\alpha\in K_\lambda:\alpha\in I\rangle$ inside $M$ such that for all $\alpha\in I$, $M_\alpha\leq M$. Then $M\vDash\sigma_\leq[M_\alpha]$. Since $M\subseteq_{\{\sigma_\leq\}}N$, we have $N\vDash\sigma_\leq[M_\alpha]$ and $M_\alpha\leq N$. The result follows from directed system axioms.
\item If $M\leq N$ and $N\vDash \sigma_\leq^R[b]$ for some $b\subseteq|N|$, we need to show that $N\vDash \sigma_\leq[b]$. By assumption we can build a $\mu$-directed system of union $M$ and have $b\leq M$. By transitivity of $\leq\ $, $b\leq N$ and the conclusion follows. Conversely, by directed system axioms, it suffices to build a $\mu$-directed system $\langle M_\alpha\in K_\lambda:\alpha\in I \rangle$ of union $M$ such that for all $\alpha\in I$, $M_\alpha\leq N$. Since $(N,R)\vDash\forall b\ \big(\sigma_\leq^R(b)\rightarrow\sigma_\leq(b)\big)$, we can require $M_\alpha\leq M$ instead of $M_\alpha\leq N$. Such construction is possible by L\"{o}wenheim-Skolem and coherence axioms.  
\end{enumerate}
\end{proof}

As an application of \ref{charmod2} and \ref{charsub2}, we generalize \ref{vasapp}:
\begin{corollary}
Let ${\bf K}$ be a $\mu$-AEC in $L$, $\lambda\defeq\ls$ and $M,N$ be $L$-structures. If either $M$ or $N$ is in $K$ and $M\preceq_{L_{\infty,{\lambda}^+}(\mu\cdot\mu)}N$, then $M\leq N$ (and both are in $K$). 
\end{corollary}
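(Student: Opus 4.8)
The plan is to follow the proof of \ref{vasapp} verbatim, substituting the $\mu$-AEC analogues \ref{charmod2}, \ref{charprop2} and \ref{charsub2} for the AEC versions \ref{charmod}, \ref{charprop} and \ref{charsub}. The one structural observation that makes this work is that the sentence $\sigma_{\bf K}$ produced by \ref{charmod2} and the formula $\sigma_\leq(x)$ produced by \ref{charprop2} both belong to $L_{(\lambda+I_2(\lambda,{\bf K}))^+,\lambda^+}(\mu\cdot\mu)$, and this logic is a sublogic of $L_{\infty,\lambda^+}(\mu\cdot\mu)$: its conjunction bound $(\lambda+I_2(\lambda,{\bf K}))^+$ is trivially below $\infty$, its free-variable/quantifier bound is exactly $\lambda^+$, and its game-quantifier depth is the same $\mu\cdot\mu$. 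Consequently, satisfaction of $\sigma_{\bf K}$ and of $\sigma_\leq$ --- including the clause asserting that Player II has a winning strategy in the associated $(\mu\cdot\mu)$-game of Definition \ref{gamedef} --- is transferred in both directions by the hypothesis $M\preceq_{L_{\infty,\lambda^+}(\mu\cdot\mu)}N$ (note this hypothesis in particular gives $M\subseteq N$, so talking about a common tuple $a$ makes sense).

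First I would use this to place both structures in $K$: since $\sigma_{\bf K}$ is a sentence of $L_{\infty,\lambda^+}(\mu\cdot\mu)$, the hypothesis gives $M\vDash\sigma_{\bf K}$ iff $N\vDash\sigma_{\bf K}$; by \ref{charmod2}, whichever of $M,N$ lies in $K$ satisfies $\sigma_{\bf K}$, hence so does the other, hence both are in $K$, and in particular both are admissible inputs to \ref{charsub2}. Next, $\sigma_\leq(x)$ is a formula of $L_{\infty,\lambda^+}(\mu\cdot\mu)$ in a tuple $x$ of $\lambda<\lambda^+$ variables, so for every $a\subseteq|M|$ of size $\lambda$ the hypothesis yields $M\vDash\sigma_\leq[a]$ iff $N\vDash\sigma_\leq[a]$; that is, $M\subseteq_{\{\sigma_\leq\}}N$ in the sense of \ref{charsub2}(1). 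Finally \ref{charsub2}(1), now applicable since $M,N\in K$, gives $M\leq N$.

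I do not expect a genuine obstacle: the real content sits in \ref{charmod2} and \ref{charsub2}, and the corollary drops out in one line once the nesting of logics is noted. The only point worth stating with care is that $\preceq_{L_{\infty,\lambda^+}(\mu\cdot\mu)}$ does preserve satisfaction of game-quantified formulas with fewer than $\lambda^+$ free variables --- but this is immediate from the meaning of elementary substructure in a logic with the closed game quantifier, since existence of a winning strategy is part of the satisfaction relation and $\sigma_{\bf K}$, $\sigma_\leq$ have admissible free-variable count and game depth.
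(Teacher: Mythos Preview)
Your proposal is correct and follows essentially the same route as the paper: use that $\sigma_{\bf K}$ and $\sigma_\leq$ lie in $L_{(\lambda+I_2(\lambda,{\bf K}))^+,\lambda^+}(\mu\cdot\mu)\subseteq L_{\infty,\lambda^+}(\mu\cdot\mu)$, transfer $\sigma_{\bf K}$ to get both structures in $K$ via \ref{charmod2}, then transfer $\sigma_\leq$ to obtain $M\subseteq_{\{\sigma_\leq\}}N$ and conclude with \ref{charsub2}(1). The paper's proof is the one-line version of exactly this argument.
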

\begin{proof}
Same proof as in \ref{vasapp}: Since $M\preceq_{L_{\infty,\lambda^+}}N$, $M\subseteq_{L_{(\lambda+I_2(\lambda,{\bf K}))^+,\lambda^+}(\mu\cdot\mu)}N$. In particular $M\vDash \sigma_{\bf K}$ iff $N\vDash\sigma_{\bf K}$. By \ref{charmod2}, either $M,N$ is in $K$ implies both are in $K$. On the other hand, the assumption implies $M\subseteq_{\{\sigma_\leq\}}N$. By \ref{charsub2}(1), $M\leq N$. 
\end{proof}

We now state the $\mu$-AEC version of \ref{presvar}, which is a variation to {\cite[Theorem 3.2]{muaec}}. 
\begin{theorem}\mylabel{presvarmu}{Theorem \thetheorem}
Let ${\bf K}$ be a $\mu$-AEC in $L$ and with L\"{o}wenheim-Skolem number $\ls$. Define $\chi\defeq\ls+I_2(\ls,{\bf K})$. There exists a $(<\mu)$-ary expansion $L'\supseteq L$ of size $\chi$, an $L'$-theory $T$ and a set of $L'$-types $\Gamma$ of size $\chi$ such that 
\begin{enumerate}
\item $K=PC^\mu(T,\Gamma,L)$.
\item If $M',N'\in EC(T,\Gamma)$ and $M'\subseteq_{L'} N'$, then $M'\restriction L\lk N'\restriction L$.
\item If $M\lk N$, there are $L'$-expansions of $M,N$ to $M',N'$ such that $M'\subseteq_{L'}N'$. 
\end{enumerate}
\end{theorem}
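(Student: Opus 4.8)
The plan is to rerun the proof of \ref{presvar} with $\omega$ replaced by $\mu$ and finite tuples replaced by $(<\mu)$-tuples, using the formulas $\phi_i,\psi_j$ from \ref{charmod2} — where, since $\lambda=\lambda^{<\mu}$, each is a $\lambda$-conjunction of (negations of) atomic $(<\mu)$-ary $L$-formulas — and invoking the directed-system axiom of \ref{chap2ques2} in place of the chain axioms. Concretely, I would expand $L$ to a $(<\mu)$-ary language $L'$ by adjoining $n$-ary functions $\{f^n_k:k<\ls,\ n<\mu\}$ and $(m+l)$-ary functions $\{g^{m,l}_k,h^{m,l}_k:k<\ls,\ m+l<\mu\}$. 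The $f^n_k$ are meant to send each $n$-tuple $a^n$ to an enumeration of a $K$-structure of size $\ls$ containing $\ran(a^n)$, which is forced by the $(<\mu)$-ary sentences (with $\chi$-junctions)
\[
\sigma^n\defeq\forall a^n\ \Big(\bigwedge_{l<n}\bigvee_{k<\ls}\big(f^n_k(a^n)=a^n(l)\big)\wedge\bigvee_{i<I(\lambda,{\bf K})}\phi_i\big(\{f^n_k(a^n):k<\ls\}\big)\Big)\qquad(n<\mu),
\]
while the $g^{m,l}_k,h^{m,l}_k$ reconcile enumerations of pairs of generated models, through sentences $\sigma^{m,n,l}$ ($m,n,l<\mu$) built exactly as in \ref{presvar}: whenever $\ran(b^m)\cup\ran(c^n)\subseteq\ran(d^l)$ they identify (up to $\approx$) $\{f^m_k(b^m):k<\ls\}$ and $\{f^n_k(c^n):k<\ls\}$ with suitable $g$-images, $\{f^l_k(d^l):k<\ls\}$ with the corresponding $h$-images, and assert $\bigvee_{i<I_2(\ls,{\bf K})}\psi_i$ for both pairs, i.e. $\{f^m_k(b^m):k<\ls\}\lk\{f^l_k(d^l):k<\ls\}$ and $\{f^n_k(c^n):k<\ls\}\lk\{f^l_k(d^l):k<\ls\}$.

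Next I would apply Chang's presentation theorem in its $(<\mu)$-ary form (the device already underlying \cite[Theorem 3.2]{muaec}): each $\chi$-conjunction or disjunction occurring in $\{\sigma^n:n<\mu\}\cup\{\sigma^{m,n,l}:m,n,l<\mu\}$ is replaced by a fresh predicate of arity $<\mu$, governed by first-order $(<\mu)$-ary axioms together with one omitted type, which — since there are $\chi$ many such junctions — yields a $(<\mu)$-ary $L'$ of size $\chi$, a first-order $(<\mu)$-ary theory $T$ and a set $\Gamma$ of $\chi$ types. It remains to check the three items. For (1), $\supseteq$: if $M'\in EC^\mu(T,\Gamma)$ then for each $(<\mu)$-tuple $a$ of $M'$ the set $M_a\defeq\{f^{l(a)}_k(a):k<\ls\}$ carries a $K$-substructure of $M'\restriction L$ of size $\ls$; given $J$ of size $<\mu$ and tuples $\langle a_j:j\in J\rangle$, let $d$ enumerate $\bigcup_{j\in J}\ran(a_j)$, which is again a $(<\mu)$-tuple since $\mu$ is regular, so $\sigma^{m,n,l}$ gives $M_{a_j}\lk M_d$ for all $j\in J$; hence $\langle M_a:a\rangle$ is a $\mu$-directed system with union $M'\restriction L$, and the directed-system axiom of \ref{chap2ques2} yields $M'\restriction L\in K$. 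For $\subseteq$: if $M\in K$, the construction in \ref{charmod2} (equivalently the $\mu$-AEC analogues of \ref{dirfact} and \ref{dirfact2}, which follow from the L\"{o}wenheim-Skolem and directed-system axioms as in the AEC case) presents $M$ as a $\mu$-directed union of $K$-structures of size $\ls$ indexed coherently by $(<\mu)$-tuples, so interpreting the $f^n_k,g^{m,l}_k,h^{m,l}_k$ accordingly expands $M$ to a model of $T$ omitting $\Gamma$. Item (2) follows from the clause ``$M_i\lk N$ for all $i$ implies $\bigcup_iM_i\lk N$'' of the directed-system axiom, applied to the common subsystem sitting inside $M'$ and inside $N'$; item (3) follows from the $\mu$-AEC version of \ref{dirfact2}, used to pick compatible $(<\mu)$-indexed $\mu$-directed presentations of $M$ and $N$.

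I expect the main obstacle to be the bookkeeping for the re-enumeration functions once tuples are $(<\mu)$-ary rather than finite: one must arrange the $f^n_k$ so that, \emph{simultaneously} over all $(<\mu)$-tuples, the generated $\ls$-sized $K$-structures form a genuinely $\mu$-directed system, which is precisely where the limit-stage argument of \ref{charmod2} has to be absorbed into the Skolem functions and where regularity of $\mu$ is used, so that the join of $<\mu$ many $(<\mu)$-tuples stays in the index set. A secondary point is confirming that the $(<\mu)$-ary form of Chang's presentation theorem keeps $|L'|$, $|T|$ and $|\Gamma|$ at $\chi$; this is routine, since every junction appearing above sits inside a formula with $<\mu$ free variables and is therefore eliminated by a single new $(<\mu)$-ary predicate together with one omitted type, exactly as in the proof of \ref{presvar}.
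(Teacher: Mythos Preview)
Your proposal is correct and follows essentially the same route as the paper's proof sketch: replace $\omega$ by $\mu$, take $(<\mu)$-ary function symbols $f^n_k,g^{m,l}_k,h^{m,l}_k$ indexed by ordinals below $\mu$, note that $\phi_i,\psi_j$ remain $\ls$-conjunctions because $\ls^{<\mu}=\ls$, push the resulting $L'_{\chi^+,\mu}$ sentences through the $(<\mu)$-ary Chang presentation, and verify the three items using the $\mu$-directed system axiom and the $\mu$-analogue of \ref{dirfact2}. Your added remarks on regularity of $\mu$ (so that unions of $<\mu$ many $(<\mu)$-tuples stay $(<\mu)$-tuples) and on absorbing the limit-stage bookkeeping of \ref{charmod2} into the Skolem functions are exactly the points the paper leaves implicit.
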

\begin{proof}[Proof sketch]
Repeat the same argument in \ref{presvar} by replacing $\omega$ by $\mu$, in particular:
\begin{enumerate}
\item Superscripts of $f,g,h$ will be $\alpha,\beta,\gamma<\mu$ instead of $m,n,l<\omega$.
\item We require that the $K$-substructures generated by $\{f_k^\alpha:k<\ls,\alpha<\omega\}$ are $\mu$-directed instead of $\al$-directed.
\item The sentences $\{\sigma^\alpha:\alpha<\mu\}\cup\{\sigma^{\alpha,\beta,\gamma}:\alpha,\beta,\gamma<\mu\}$ are in $L'_{\chi^+,\mu}$.
\item For $i<I(\ls,{\bf K})$ and $j<I_2(\ls,{\bf K})$, the formulas $\phi_i,\psi_j$ are still $\ls$-conjunctions because ${\ls}^{<\mu}=\ls$. 
\item Chang's presentation theorem generalizes to $\mu$-AECs and converts a $L'_{\chi^+,\mu}$ theory of size $\chi$ into a $PC^\mu_\chi$.
\item When checking the items of the theorem statement, notice that by definition of a $\mu$-AEC, directed system axioms (instead of chain axioms) are built-in. Meanwhile, \ref{dirfact2} generalizes to $\mu$-directed systems.
\end{enumerate}
\end{proof}
\nocite{kue2} \nocite{kue0}
Unlike \ref{presvarcor}, the above result does not lead to the Hanf number computation because the languages are not finitary while well-ordering is definable. In particular there is no reasonable bound to the Hanf number of $L_{\aleph_1,\aleph_1}$ \cite[Chapter 5.1B]{dic}. As asked in \cite[Remark 3.3]{muaec}:
\begin{ques} Let $\mu\geq\aleph_1$. Does the Hanf number exist for $\mu$-AECs?\end{ques}
\bibliographystyle{alpha}
\bibliography{references}

{\small\setlength{\parindent}{0pt}
\textit{Email}: wangchil@andrew.cmu.edu

\textit{URL}: http://www.math.cmu.edu/$\sim$wangchil/

\textit{Address}: {Department of Mathematical Sciences, Carnegie Mellon University, Pittsburgh PA 15213, USA}
\end{document}